\newtheorem{theorem}{Theorem}[section]
\newtheorem{lemma}[theorem]{Lemma}
\newtheorem{proposition}[theorem]{Proposition}
\newtheorem{corollary}[theorem]{Corollary}
\theoremstyle{definition}
\newtheorem{definition}[theorem]{Definition}
\newtheorem{remark}[theorem]{Remark}
\newcommand{\QQ}{M_{\mathbb{Q}}}
\newcommand{\ZZ}{\mathcal{Z}}
\newcommand{\OO}{\mathcal{O}}
\newcommand{\N}{\mathbb{N}}
\newcommand{\Z}{\mathbb{Z}}
\newcommand{\Q}{\mathbb{Q}}
\newcommand{\R}{\mathbb{R}}
\newcommand{\C}{\mathbb{C}}
\newcommand{\K}{\mathbb{K}}		
\newcommand{\Cuntz}[1]{\mathcal{O}_{#1}}
\newcommand{\Aut}[1]{{\rm Aut}(#1)}
\newcommand{\Endo}[1]{{\rm End}(#1)}
\newcommand{\uAut}[1]{{\rm Aut}_0(#1)}
\newcommand{\uEnd}[1]{{\rm End}_0(#1)}
\newcommand{\AutSt}[2]{{\rm Aut}_{#2}(#1)}
\newcommand{\EndSt}[2]{{\rm End}_{#2}(#1)}
\newcommand{\id}[1]{{\rm id}_{#1}}
\newcommand{\obj}{{\rm obj}}
\newcommand{\mor}{{\rm mor}}
\newcommand{\Proj}[1]{\mathcal{P}r(#1)}
\newcommand{\uProj}[1]{\mathcal{P}r_0(#1)}
\newcommand{\BunX}[1]{\mathcal{B}un_X(#1)}
\begin{document}
	
\title{A Dixmier-Douady theory for strongly self-absorbing
 $C^*$-algebras}
\author{Marius Dadarlat}\address{MD: Department of Mathematics, Purdue University, West Lafayette, IN 47907, USA}\email{mdd@math.purdue.edu}
\author{Ulrich Pennig}\address{UP: Mathematisches Institut, Westf\"alische Wilhelms-Universit\"at M\"unster, Ein\-stein\-stra\ss e 62, 48149 M\"unster, Germany}\email{u.pennig@uni-muenster.de}	
\begin{abstract}

We show that the  Dixmier-Douady theory of continuous fields of $C^*$-algebras with compact operators $\K$ as fibers extends significantly to a  more general theory
 of 
  fields with fibers $A\otimes \K$ where $A$ is a strongly self-absorbing C*-algebra.
  The classification of the corresponding locally trivial fields involves a  generalized cohomology theory   which is computable via the Atiyah-Hirzebruch spectral sequence.
  An important feature
 of the general theory is the appearance of characteristic  classes in higher dimensions.
 We also give a necessary and sufficient $K$-theoretical condition for local triviality  of these continuous fields over  spaces of finite covering dimension.
\end{abstract}
\thanks{M.D. was partially supported by NSF grant \#DMS--1101305}
\maketitle
\tableofcontents

\section{Introduction}
Continuous fields of C*-algebras are employed
	as versatile tools in several areas, including index and representation theory, the Novikov and the Baum-Connes conjectures \cite{Connes:bookNG}, (deformation) quantization \cite{Rieffel:Quantization, Landsman:book} and the study of the quantum Hall effect \cite{Bellissard:Hall-effect}. While continuous fields play the role of bundles in C*-algebra theory, the underlying bundle structure  is typically not locally trivial. Nevertheless, these bundles have sufficient continuity properties to allow for local propagation of interesting K-theory invariants along their fibers.
Continuous fields of C*-algebras with simple fibers occur naturally as  the class of C*-algebras with Hausdorff primitive spectrum.

 In a classic paper \cite{paper:DixmierDouady}, Dixmier and Douady  studied the continuous fields of $C^*$-algebras
 with fibers (stably) isomorphic to the compact operators $\K = \K(H)$
  ($H$ an infinite dimensional Hilbert space) over a paracompact base space $X$.
 In this article we develop a general theory
 of continuous
  fields with fibers $A\otimes \K$ where $A$ is a strongly self-absorbing C*-algebra. We show that the results of \cite{paper:DixmierDouady} fit naturally and admit significant generalizations in the new theory.
  The classification of these fields involves  suitable generalized cohomology theories.
  An important feature
 of the new theory is the appearance of characteristic  classes in higher dimensions.

 As a byproduct of our approach we find an operator algebra realization of the classic spectrum $BBU_\otimes$.
 Let us recall that for a compact connected metric space $X$ the invertible elements of the $K$-theory ring $K^0(X)$ is an abelian group $K^0(X)^\times$ whose elements
 are represented by classes of vector bundles of virtual rank $\pm 1$, corresponding to homotopy classes
 $[X,\Z/2\times  BU_\otimes]$. The group operation is induced by the tensor product of vector bundles.  Segal has shown that $BU_\otimes$ is in fact an infinite loop space and
 hence there is a cohomology theory $bu^*_\otimes(X)$ such that $K^0(X)^\times$ is just the $0$-group $bu^0_\otimes(X)$ of this theory  \cite{paper:SegalCatAndCoh}, but gave no geometric interpretation for the higher order groups. Our results lead to a geometric realization of the first group $bu^1_\otimes(X)$ as the  isomorphism classes of locally trivial bundles of C*-algebras with fiber
 the stabilized Cuntz algebra $\OO_\infty \otimes \K$
 where the group operation corresponds to the tensor product, see \cite{Dadarlat-Pennig:unit_spectra}.

Let us recall two central results of Dixmier and Douady from  \cite{paper:DixmierDouady}.
\begin{theorem}\label{thm:DD1} The isomorphism classes of locally trivial
fields over $X$ with fibers $\K$ form a group under the operation
of tensor product and this group is isomorphic to $\check{H}^3(X,\mathbb{Z})$.
\end{theorem}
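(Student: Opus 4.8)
The plan is to translate the classification into a homotopy-theoretic computation and then match the two group structures. First I would identify the structure group of such fields. Every $*$-automorphism of $\K=\K(H)$ is inner, implemented by a unitary on $H$ that is unique up to a scalar of modulus one, so $\Aut{\K}$ is naturally homeomorphic (in the point-norm topology) to the projective unitary group $PU(H)=U(H)/\mathbb{T}$. A locally trivial field with fibre $\K$ is then nothing but the field associated to a principal $PU(H)$-bundle, namely its bundle of local trivializations; since $X$ is paracompact, isomorphism classes of such fields are in natural bijection with the homotopy classes $[X,BPU(H)]$.

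Next I would determine the homotopy type of $BPU(H)$. By Kuiper's theorem the unitary group $U(H)$ of an infinite-dimensional Hilbert space is contractible. Applying this to the long exact homotopy sequence of the central extension
\[
1 \longrightarrow \mathbb{T} \longrightarrow U(H) \longrightarrow PU(H) \longrightarrow 1,
\]
regarded as a fibration with fibre $\mathbb{T}=K(\Z,1)$, gives $\pi_n(PU(H))\cong\pi_{n-1}(\mathbb{T})$ for all $n\ge 1$, so $PU(H)$ is an Eilenberg--MacLane space $K(\Z,2)$ and hence $BPU(H)\simeq K(\Z,3)$. For paracompact $X$ the representable functor $[X,K(\Z,3)]$ coincides with \v{C}ech cohomology, yielding the bijection with $\check H^3(X,\Z)$. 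Concretely, the class can be read off from transition data: lifting a $PU(H)$-valued cocycle $(\alpha_{ij})$ to unitaries $(u_{ij})$ produces a scalar defect $u_{ij}u_{jk}=\lambda_{ijk}\,u_{ik}$, and the resulting $\mathbb{T}$-valued $2$-cocycle $(\lambda_{ijk})$ defines a class in $\check H^2(X,\underline{\mathbb{T}})\cong\check H^3(X,\Z)$, the isomorphism coming from the exponential sequence together with the softness of the sheaf $\underline{\R}$ on a paracompact space.

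For the group structure I would use $\K\otimes\K\cong\K$, which follows from $H\otimes H\cong H$: the tensor product of two such fields is again a locally trivial field with fibre $\K$, with the trivial field $X\times\K$ as unit and the conjugate field as inverse, so isomorphism classes form an abelian group. The tensor product of unitaries induces $PU(H)\times PU(H)\to PU(H\otimes H)\cong PU(H)$ and hence a multiplication $BPU(H)\times BPU(H)\to BPU(H)$ that is unital up to homotopy, i.e. an $H$-space structure on $K(\Z,3)$. A short K\"unneth computation shows that $K(\Z,3)$ carries a unique such $H$-space structure, namely the one induced by addition in $\check H^3$; therefore the bijection above is automatically a group isomorphism. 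Alternatively one checks directly on the cocycle level that tensoring fields multiplies the implementing unitaries and hence multiplies the defect cocycles $(\lambda_{ijk})$, which is exactly addition of Dixmier--Douady classes.

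I expect the main obstacle to be the foundational identification in the first paragraph: equipping $\Aut{\K}$ with a topology for which it is homeomorphic to $PU(H)$ so that Kuiper's theorem applies, and proving rigorously that locally trivial fields correspond to principal $PU(H)$-bundles, together with well-definedness and completeness of the invariant under all choices of open cover, local trivializations, and lifts. Once this analytic bookkeeping is in place, the computation $BPU(H)\simeq K(\Z,3)$, the passage $\check H^2(X,\underline{\mathbb{T}})\cong\check H^3(X,\Z)$, and the identification of tensor product with addition become essentially formal.
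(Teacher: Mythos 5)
Your argument is correct, but it is worth noting that the paper does not prove Theorem~\ref{thm:DD1} at all: it is recalled from Dixmier--Douady's original article, and the paper's own machinery only ``implicitly reproves'' it as the special case $A=\C$ of the general theory. Your route is the classical one and differs from that machinery in two ways. First, you compute the homotopy type of $\Aut{\K}$ from the central extension $\mathbb{T}\to U(H)\to PU(H)$ together with contractibility of $U(H)$, whereas the paper uses the fibration $\AutSt{A\otimes\K}{1\otimes e}\to\Aut{A\otimes\K}\to\Proj{A\otimes\K}^\times$, proves contractibility of the stabilizer by the half-flip/path-of-unitaries technique (Theorem~\ref{thm:stabContractible}), and identifies the base with $BU(A)$; for $A=\C$ these two fibrations carry the same information, and the paper's Remark~\ref{rem:symmetry} area (Remark after Theorem~\ref{thm:stabContractible}) explicitly observes that its argument reproves contractibility of $U(H)$ in the strong topology. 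Second, for the group structure you invoke the uniqueness of the $H$-space structure on $K(\Z,3)$ via a K\"unneth computation (or the cocycle-level multiplication of defect cocycles), which is clean and elementary but special to Eilenberg--MacLane targets; the paper instead runs the Eckmann--Hilton argument and the permutative-category/infinite-loop-space machine of Segal and May, which is heavier but is forced on it because for general strongly self-absorbing $A$ the classifying space $B\Aut{A\otimes\K}$ has many nonvanishing homotopy groups and no uniqueness statement is available. One small caution in your write-up: the identification $\Aut{\K}\cong PU(H)$ in the point-norm topology matches the quotient of the \emph{strong} operator topology on $U(H)$, so the contractibility you need is the strong-topology statement (due to Dixmier--Douady, predating Kuiper), not Kuiper's norm-topology theorem as literally cited; both are true, so this is a matter of attribution and of checking that $U(H)\to PU(H)$ admits local sections in that topology, but it is precisely the ``analytic bookkeeping'' you flag as the main obstacle.
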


\begin{theorem}\label{thm:DD2}If $X$ is finite dimensional,
then a separable continuous field $B$ over $X$ with fibers isomorphic to $\K$ is locally trivial if and only if it satisfies  Fell's condition, i.e.\ each point of
$ X$ has a closed neighborhood $V$ such that the restriction of $B$ to $V$
contains a projection of constant rank~$1$.
\end{theorem}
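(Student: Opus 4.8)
The two implications have very different characters, so I would treat them separately. The ``only if'' direction is routine: if $B$ is locally trivial then each point has a neighborhood $U$ with $B|_U\cong C_0(U)\otimes\K$, and choosing a closed neighborhood $V\subseteq U$ together with the constant section $x\mapsto p$ for a fixed rank-one projection $p\in\K$ produces a projection in $B|_V$ of constant rank one. Thus Fell's condition holds, and the entire content of the theorem lies in the converse. My plan for the converse is to convert Fell's condition into a statement about Hilbert modules (equivalently, continuous fields of Hilbert spaces) and then to invoke local triviality of such fields over a finite-dimensional base.

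First I would fix a point of $X$ and pass to a closed neighborhood $V$ on which Fell's condition supplies a projection $e\in B|_V$ of constant rank one. Since each fiber $B_x\cong\K=\K(H)$ and $e_x$ has rank one, the corner $eBe$ is a continuous field of one-dimensional C*-algebras, hence isomorphic to $C(V)$. I would then form the right Hilbert $C(V)$-module $\mathcal{E}:=Be$, with right action of $C(V)\cong eBe$ and inner product $\langle be, b'e\rangle = e\,b^*b'\,e\in eBe$. Fiberwise $\mathcal{E}_x=B_xe_x\cong H$ is the ``column space'' of $\K(H)$, and the map sending $b\in B$ to left multiplication on $\mathcal{E}$ identifies $B|_V$ with the compacts $\K(\mathcal{E})$: the rank-one operators $\theta_{be,b'e}$ correspond to the elements $b\,e\,b'^*\in B$, whose closed span is the full ideal $\overline{BeB}=B|_V$ because $e$ is fiberwise full. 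This gives $B|_V\cong\K(\mathcal{E})$.

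The next step is to recognize $\mathcal{E}$ as the module of continuous sections of a continuous field $\mathcal{H}$ of separable, infinite-dimensional Hilbert spaces over $V$ (separability of $B$ guarantees separable fibers). Granting this, the crux is the classical fact that over a finite-dimensional paracompact base such a field is locally trivial: on a sufficiently small closed neighborhood $V'\subseteq V$ one has an isometry $\mathcal{H}|_{V'}\cong V'\times H$, i.e.\ $\mathcal{E}|_{V'}\cong C(V')\otimes H$. Applying $\K(-)$ then yields $B|_{V'}\cong\K(C(V')\otimes H)\cong C(V')\otimes\K$, which is exactly local triviality at the chosen point, completing the converse.

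The main obstacle is precisely this local-triviality statement for continuous fields of Hilbert spaces, and it is here --- and only here --- that the finite covering dimension of $X$ enters. The mechanism is to build a continuous orthonormal frame of sections $(\xi_n)$ near a point: starting from a countable total set of local sections, one trims and orthonormalizes by a Gram--Schmidt procedure organized along a cover whose nerve has dimension bounded by $\dim X$, so that the inductive construction avoids the pathologies that genuinely occur in the infinite-dimensional setting. The technical heart is the uniform control, over $V'$, of the tails of the resulting expansions in the $C(V')$-module norm; finiteness of $\dim X$ is exactly what makes this control available and forces the local dimension of the field to be full. Once such a frame is produced, the isometry with the constant field --- and hence the conclusion --- is immediate.
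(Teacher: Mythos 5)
Your argument is correct, but it follows the original Dixmier--Douady route rather than the one this paper takes. The paper does not reprove Theorem~\ref{thm:DD2} on its own; it is subsumed in the generalization Theorem~\ref{thm:trivialization}, whose proof runs as follows: the corner $pB(V)p$ is a unital separable continuous field with fiber $A$ and hence isomorphic to $C(V)\otimes A$ (for $A=\C$ this is immediate; in general it is the trivialization theorem of \cite{paper:Dadarlat-Winter-fields}); Brown's stable isomorphism theorem \cite{paperL.G.Brown.Stable.Isom} gives $pB(V)p\otimes\K\cong B(V)\otimes\K$ since $p$ is full; and $B(V)\otimes\K\cong B(V)$ by \cite{paper.Hirshberg.Rordam.Winter}, which is the sole place where finite covering dimension enters. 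You instead realize $B|_V$ as $\K(\mathcal{E})$ for the Hilbert $C(V)$-module $\mathcal{E}=Be$, view $\mathcal{E}$ as a continuous field of separable infinite-dimensional Hilbert spaces, and invoke the Dixmier--Douady triviality theorem for such fields over finite-dimensional paracompact bases \cite{paper:DixmierDouady}, so for you the dimension hypothesis is consumed by the frame construction rather than by the stability theorem. Both are complete proofs of the statement as given (your Morita/corner identifications are all sound, and the Hilbert-space-field theorem you lean on is a genuinely prior result, so there is no circularity). The trade-off: your reduction is the more elementary and self-contained one for fiber $\K$, but it has no analogue when $\C$ is replaced by a general strongly self-absorbing $A$ --- the corner field $pBp$ then has fiber $A$ rather than $\C$, and there is no classical triviality theorem for the corresponding module fields --- which is exactly why the paper routes the argument through \cite{paper:Dadarlat-Winter-fields} and \cite{paper.Hirshberg.Rordam.Winter} instead.
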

 The corresponding characteristic class $\delta(B) \in \check{H}^3(X,\Z)$ is now known as the Dixmier-Douady invariant. Most prominent among its applications is its appearance as twisting class in twisted $K$-theory \cite{paper:DonovanKaroubi, paper:RosenbergCT, paper:AtiyahSegal, paper:AtiyahSegal-2}, {which is the natural home for $D$-brane charges in string theory} \cite{paper:SegalString, paper:BouwknegtMathai}. A recent friendly introduction
 to the Dixmier-Douady theory can be found in  \cite{Schochet:dummy}.

The class of strongly self-absorbing C*-algebras, introduced by Toms and Winter \cite{paper:TomsWinter},
is closed under tensor products and  contains C*-algebras that are cornerstones of Elliott's classification program of simple nuclear C*-algebras:
the Cuntz algebras $\Cuntz{2}$ and $\Cuntz{\infty}$, the Jiang-Su algebra $\mathcal{Z}$, the canonical anticommutation relations algebra $M_{2^\infty}$ and in fact all UHF-algebras of infinite type. These are separable C*-algebras
singled out by a crucial property: there exists an isomorphism $A \to A \otimes A$, which is unitarily homotopic to the map $a \mapsto a \otimes 1_A$, \cite{paper:DadarlatKK1, paper:WinterZStable}. Using this property, which is equivalent to, but formally much stronger than  the original definition of \cite{paper:TomsWinter}, we prove that
\begin{itemize}
\item  $\Aut{A }$   is contractible in the point-norm topology.
\item $\Aut{A \otimes \K}$ is well-pointed and it has the homotopy type  of a CW-complex.
\item The classifying space $B\Aut{A \otimes \K}$ of locally trivial $C^*$-algebra bundles with fiber $A \otimes \K$ carries an $H$-space structure induced by the tensor product. Moreover, this tensor product multiplication is homotopy commutative up to all higher homotopies and therefore equips $B\Aut{A \otimes \K}$ with the structure of an infinite loop space by  results of Segal and May.
\end{itemize}
 These properties mirror entirely the corresponding properties  of
 $\Aut{\K}=PU(H)$ and $B\Aut{\K}=BPU(H)$ obtained by their identification with the Eilenberg-MacLane spaces $K(\Z,2)$ and respectively $K(\Z,3)$ which we implicitly reprove as they correspond to the case $A=\C$. Recall that if $X$ is paracompact Hausdorff, then $\check{H}^n(X,\Z)\cong [X,K(\Z,n)],$ \cite{paper:Huber}.

It is worth noting that while the obstructions to having a natural group structure on the isomorphism classes of locally trivial continuous fields with fiber  $A \otimes \K$ -- such as nontrivial Samelson products \cite[sec.6]{paper:Nistor_AF} -- do vanish in the strongly self-absorbing case, that is not necessarily true for general self-absorbing $C^*$-algebras, i.e. those with $A\otimes A\cong A$.
 This motivates yet again our choice of fibers.
In complete analogy with Theorem~\ref{thm:DD1} we have:
\vskip 8pt
\noindent{\bf Theorem A}. \emph{Let $X$ be a compact metrizable space and let $A$ be a strongly self-absorbing C*-algebra. The set $\BunX{A\otimes \K}$ of isomorphism classes of locally trivial
fields over $X$ with fiber $A\otimes \K$ becomes an abelian group under the operation
of tensor product. Moreover, this group is isomorphic to $E_A^1(X)$,
the first group of a generalized connective cohomology theory $E_{A}^*(X)$ defined
by the infinite loop space $B\Aut{A \otimes \K}$.}

We also show that the zero group $E_{A}^0(X)$ computes the homotopy classes
$[X,\Aut{A \otimes \K}]$ and it is isomorphic to the group of positive invertible elements of the abelian ring $K_0(C(X)\otimes A)$, denoted by $K_0(C(X)\otimes A)^{\times}_{+}$, for $A\neq\C$.
In particular, we fully compute the coefficients of $E_{A}^*(X)$, as they are given by the homotopy groups
 \[\pi_i(\Aut{A\otimes \K})=
\begin{cases}
		K_0(A)_{+}^{\times} & \text{if } i=0 \\
			K_i(A) & \text{if } i\geq 1\ .
	\end{cases}
\]
$K_0(A)$ has a natural ring structure with unit given by
the class of  $1_A$. $K_0(A)^{\times}$ denotes the group of multiplicative  elements of $K_0(A)$
and $K_0(A)_{+}^{\times}$ is its subgroup consisting of positive elements.

 The Atiyah-Hirzebruch spectral sequence then allows us to obtain classification results for locally trivial $A \otimes \K$-bundles over $X$. In the case of the universal UHF algebra $M_{\mathbb{Q}}$, bundles with fiber $M_{\mathbb{Q}} \otimes \K$  are essentially classified by the ordinary rational cohomology groups of odd degree of the underlying space:
 \[
\BunX{M_{\mathbb{Q}} \otimes \K}\cong E_{M_{\mathbb{Q}}}^{1}(X)  \cong H^1(X,\mathbb{Q}_+^\times)\oplus \bigoplus_{k \geq 1} H^{2k+1}(X,\mathbb{Q})
.\]
  A similar result holds for   bundles with fiber $\mathcal{O}_\infty \otimes M_{\mathbb{Q}} \otimes \K$,
see Corollary~\ref{cor:computationEA-1}.
It follows that if $A$ is any strongly self-absorbing C*-algebra  that satisfies the UCT, then there are
rational characteristic classes
\(\delta_k:\BunX{A \otimes \K} \to H^{2k+1}(X,\Q)\) such that  $\delta_k(B_1\otimes B_2)=\delta_k(B_1)+\delta_k(B_2)$.

An unexpected consequence of our results is that for any strongly self-absorbing C*-algebra $A$, if two bundles $B_1,B_2\in \BunX{A\otimes \K}$ become isomorphic after tensoring with $\OO_\infty$,  then they must be isomorphic in the first place, see Corollary~\ref{cor:103}.

Our result concerning local triviality is the following generalization of Theorem~\ref{thm:DD2}
 which involves a $K$-theoretic interpretation of Fell's condition.
\vskip 6pt
\noindent{\bf Theorem B}. \emph{Let $X$ be a locally compact metrizable space of finite covering dimension and   let $A$ be a strongly self-absorbing C*-algebra. A separable continuous field  $B$ over $X$ with fibers abstractly isomorphic to $A\otimes \K$
 is locally trivial if and only if for each point
$x\in X$, there exist a closed neighborhood $V$ of $x$ and a projection $p\in B(V)$ such that
$[p(v)]\in K_0(B(v))^{\times}$  for all $v\in V$.}
\vskip 4pt

{A notable consequence of Theorem B is that
  any separable continuous field of C*-algebras over $X$ with all fibers abstractly isomorphic to $M_{\Q}\otimes \K$ is locally trivial and therefore, by Theorem A, it is determined up to isomorphism by its class in $ E_{M_{\mathbb{Q}}}^{1}(X)  \cong H^1(X,\mathbb{Q}_+^\times)\oplus \bigoplus_{k \geq 1} H^{2k+1}(X,\mathbb{Q})
.$}

 The condition that $X$ is finite dimensional is essential in Theorem B, as shown by examples  constructed in
 \cite{paper:DixmierDouady} for $A=\mathbb{C}$, \cite{paper.Hirshberg.Rordam.Winter} for $A=M_{\mathbb{Q}}$   and \cite{Dad:fiberwiseKK} for $A=\OO_2$.

Let us recall that a C*-algebra isomorphic  to the compact operators on a separable (possibly finite
dimensional) Hilbert space is called an  elementary C*-algebra.
Dixmier and Douady gave two other results concerning continuous fields of elementary C*-algebras:

(i) If $B$ is a continuous field
of elementary C*-algebras that satisfies Fell's condition,  then
$B\otimes \K$ is locally trivial.

(ii) The class $\delta(B) \in \check{H}^3(X,\Z)$ can be defined for any continuous field
of elementary C*-algebras that satisfies Fell's condition. Moreover $B$ is isomorphic to the
compact operators of a continuous field of Hilbert spaces if and only if $\delta(B)=0$.

We extend (i)  and the first part of (ii) to  general strongly
self-absorbing C*-algebras, but  we must require finite dimensionality for either the fiber or  the base space
in order to obtain a perfect analogy with these results, see Corollaries~\ref{cor:D} and~\ref{cor:char-class}. These  restrictions are necessary.  Indeed, while any  unital separable continuous field of C*-algebras with fiber $\mathbb{C}$ over $X$ is locally trivial (in fact isomorphic to $C_0(X)$), automatic local triviality
fails if $\mathbb{C}$ is replaced by  strongly self-absorbing C*-algebras such as $M_{\mathbb{Q}}$ and $\OO_2$, see \cite{paper.Hirshberg.Rordam.Winter}  and \cite{Dad:fiberwiseKK}.

This fact  also explains why the second part of (ii) is specific to fields of elementary C*-algebras.
Our set-up allows us to associate rational characteristic classes to any continuous fields (satisfying a weak Fell's condition)
whose fibers are Morita equivalent to  strongly self-absorbing C*-algebras which are not necessarily mutually
isomorphic. Such fields are typically very far from being locally trivial.
 We refer the reader to Section~\ref{section:DD} for further discussion.

The homotopy equivalence $\Aut{A \otimes \K}\simeq K_0(A)_{+}^{\times} \times BU(A)$ (see Corollary~\ref{cor:AutAK_BUA}) suggests that the generalized cohomology theory associated to $\Aut{A \otimes \K}$ is very closely related to the unit spectrum $GL_1(KU^A)$ of topological $K$-theory with coefficients in the group $K_0(A)$. This is again parallel to the Dixmier-Douady theory, where we have $\Aut{\K} = PU(H) \simeq BU(1) \subset GL_1(KU)$. We will make this connection precise in \cite{Dadarlat-Pennig:unit_spectra}.
Let us  just mention here that the homotopy equivalence
$\Aut{\ZZ \otimes \K}\simeq BU$ deloops to a homotopy equivalence
  $B\Aut{\ZZ \otimes \K}\simeq B(BU_{\otimes}).$ This unveils a very natural
  operator algebra realization of the classic $\Omega$-spectrum $B(BU_{\otimes})$ introduced by Segal \cite{paper:SegalCatAndCoh}.

  The authors are grateful to Johannes Ebert, Peter May and Jim McClure for a number of useful discussions.

\section{The topology of $\Aut{A \otimes \K}$ for strongly self-absorbing algebras }

 The automorphism group $\Aut{B}$ of a separable C*-algebra $B$, equipped with the point-norm topology, is a separable and  metrizable topological group.
In particular its topology is compactly generated.
We are going to show in this section that if  $A$ is a strongly self-absorbing $C^*$-algebra, then  $\Aut{A \otimes \K}$ is well-pointed and has the homotopy type of a $CW$-complex.
This will enable us to apply the standard techniques of algebraic topology, in particular when it comes to dealing with its classifying space. We denote by $\simeq$ the relation of homotopy equivalence.
\subsection{Strongly self-absorbing C*-algebras}
Let us recall from \cite{paper:TomsWinter} that a C*-algebra $A$ is strongly self-absorbing if it
is separable, unital and there exists a $*$-isomorphism $\psi:A \to A\otimes A$ such that
$\psi$ is approximately unitarily equivalent to the map $l:A \to A\otimes A$, $l(a)=a\otimes 1_A$.
It follows from \cite{paper:DadarlatKK1} and \cite{paper:WinterZStable} that  $\psi$ and $l$ must be in fact unitarily homotopy equivalent, see Theorem~\ref{thm:cancellation}(b). Note that, unlike \cite{paper:TomsWinter},
 we don't  exclude the complex numbers $\C$ from the class of strongly self-absorbing C*-algebras.
 For future reference, we collect under one roof an important  series of results due to several authors.
\begin{theorem}\label{thm:cancellation}
 A strongly self-absorbing C*-algebra $A$ has the following properties:
\end{theorem}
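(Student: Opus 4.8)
The plan is to assemble the listed properties from the existing literature rather than to prove each from scratch, since the statement collects results of Toms--Winter \cite{paper:TomsWinter}, Dadarlat--Winter \cite{paper:DadarlatKK1} and Winter \cite{paper:WinterZStable}. The purely structural items --- that $A$ is simple and nuclear, that the two embeddings $a \mapsto a\otimes 1_A$ and $a \mapsto 1_A \otimes a$ of $A$ into $A \otimes A$ are approximately unitarily equivalent (the ``half-flip'' being approximately inner), that $A \otimes A \cong A$, and that $A$ has at most one tracial state --- I would quote directly from \cite{paper:TomsWinter}, where they are derived formally from the defining approximate unitary equivalence of $\psi$ with $l$. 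The genuinely substantive claim is part~(b): that $\psi$ and $l$ are not merely approximately unitarily equivalent but \emph{unitarily homotopic}, i.e.\ conjugate by the endpoint of a norm-continuous path of unitaries starting at the identity.

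For part~(b) I would run an Elliott-type approximate-intertwining argument, using $\mathcal{Z}$-absorption as the source of room to correct errors. One starts from the defining sequence of unitaries $u_n \in \mathcal{U}\big((A\otimes A)^{\sim}\big)$ with $\|u_n\, l(a)\, u_n^* - \psi(a)\| \to 0$ on a generating set. The first step is to arrange, by passing to a subsequence and renormalizing, that the successive corrections $u_{n+1}u_n^*$ are so close to $1$ that each admits a short continuous path to $1$ inside the unitary group via a logarithm, with the lengths of these paths summable. Here I would invoke Winter's theorem \cite{paper:WinterZStable} that $A$ is $\mathcal{Z}$-stable: the isomorphism $A \cong A \otimes \mathcal{Z} \cong A \otimes \mathcal{Z}^{\otimes \infty}$ supplies a reservoir of unitaries supported on ``fresh'' tensor factors, so that the correction at stage $n+1$ can be performed without disturbing the approximations already achieved at earlier stages.

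Concatenating and reparametrizing these short paths on a shrinking sequence of subintervals of $[0,1]$ then yields a single norm-continuous path $t \mapsto w_t$ of unitaries with $w_0 = 1$ whose limiting behaviour at $t=1$ conjugates $\psi$ exactly onto $l$; this is the content I would borrow from the Dadarlat--Winter analysis \cite{paper:DadarlatKK1}, whose rigidity results force all unital endomorphisms of $A$ into a single homotopy class and thereby allow the approximate equivalence to be integrated into a genuine homotopy. The main obstacle is precisely this upgrade from approximate unitary equivalence to a continuous unitary path: one must control the convergence so that the concatenated path stays norm-continuous at its endpoint and realizes an exact conjugation in the limit, and it is here that self-absorption is indispensable. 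Finally, the ring structure on $K_0(A)$ with unit $[1_A]$, together with the compatibility statements, follows formally by applying the $K$-theory functor to $\psi$ and to the external product $K_0(A)\otimes K_0(A)\to K_0(A\otimes A)$, with part~(b) ensuring that $\psi_* = l_*$ on $K_0$, so that the resulting multiplication is independent of the chosen isomorphism and inherits commutativity from the approximate innerness of the flip.
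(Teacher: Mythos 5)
Your overall strategy --- assembling the listed properties from the literature rather than reproving them --- is exactly what the paper does: Theorem~\ref{thm:cancellation} is explicitly a compilation, and the paper offers no proof beyond the citations embedded in each item. For item (a) and for the basic approximate unitary equivalence statements your plan to quote \cite{paper:TomsWinter} is fine.

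There are, however, two problems. First, your sketch of the upgrade in part (b) from approximate unitary equivalence to a genuine continuous path of unitaries contains a step that fails: you cannot arrange, by passing to a subsequence, that the successive corrections $u_{n+1}u_n^*$ are close to $1$. What one actually gets is that $u_{n+1}u_n^*$ approximately commutes with the image of $A$ on ever larger finite sets, and the real difficulty is to connect such a unitary to $1$ \emph{through} unitaries that keep approximately commuting with the image. The obstruction to doing so lives in $\pi_0$ of the unitary group of the (approximate) relative commutant, and removing it is precisely where $K_1$-injectivity of $A$ is needed. This is why the paper's citation chain for (b) has three links: \cite[Thm.2.2]{paper:DadarlatKK1} proves the statement \emph{under the hypothesis that $A$ is $K_1$-injective}; Winter \cite{paper:WinterZStable} shows every infinite-dimensional strongly self-absorbing $A$ is $\mathcal{Z}$-stable; and R{\o}rdam \cite{paper:Rordam-ZStable} shows $\mathcal{Z}$-stable unital C*-algebras are $K_1$-injective. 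Your invocation of $\mathcal{Z}$-stability as a ``reservoir of fresh tensor factors'' points at a different mechanism and does not address the $\pi_0(U)$ obstruction, so as written the sketch would not close. Second, the theorem also asserts items (c), (d) and (e) --- cancellation of full projections and $\pi_0(U(B))\cong K_1(B)$ for unital $\mathcal{Z}$-stable algebras (both due to Jiang \cite{paper:Jiang-nonstable}), and the UCT consequences $K_1(A)=0$ together with the classification of the purely infinite examples \cite{paper:TomsWinter} --- none of which your proposal addresses; these are used repeatedly later in the paper and cannot be omitted from the compilation.
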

\begin{itemize}
\item[(a)] $A$ is simple, nuclear and is either stably finite or purely infinite; if it is stably finite, then it admits
a unique trace, see \cite{paper:TomsWinter} and references therein.

\item[(b)] Let $B$ be a unital separable C*-algebra. For any two unital $*$-homomorphisms $\alpha,\beta:A\to B\otimes A$ there is a continuous path of unitaries
$(u_t)_{t\in [0,1)}$ in $B\otimes A$ such that $u_0=1$ and  $\lim_{t \to 1}\|u_t\alpha(a)u_t^*-\beta(a)\|=0$ for all $a\in A$.
This property was proved in \cite[Thm.2.2]{paper:DadarlatKK1} under the assumption that $A$ is $K_1$-injective.
 Winter \cite{paper:WinterZStable} has shown that any infinite dimensional strongly self-absorbing C*-algebra $A$ is $\mathcal{Z}$-stable,
i.e. $A \otimes \mathcal{Z}\cong A$, and hence $A$ is $K_1$-injective  by a result of R{\o}rdam~\cite{paper:Rordam-ZStable}.

\item[(c)] Any unital  $\ZZ$-stable C*-algebra has cancellation of full projections by a result of Jiang
\cite[Thm.1]{paper:Jiang-nonstable}.  In particular,
if $B$ is a separable unital C*-algebra and $A\neq \C$, then $B\otimes A$
is isomorphic to $B\otimes A\otimes \mathcal{Z}$  and hence it has cancellation of full projections.

\item[(d)] If $B$ is a unital $\ZZ$-stable C*-algebra, then $\pi_0(U(B))\cong K_1(B)$, by \cite[Thm.2]{paper:Jiang-nonstable}.

\item[(e)]
If $A$ satisfies the Universal Coefficient Theorem (UCT) in KK-theory, then $K_1(A)=0$ by \cite{paper:TomsWinter}.
If  in addition  $A$ is purely infinite, then  $A$ is isomorphic to either $\Cuntz{2}$ or $\Cuntz{\infty}$ or a tensor product of $\Cuntz{\infty}$ with a UHF-algebra of infinite type \cite[Cor.5.2]{paper:TomsWinter}.
\end{itemize}
\noindent\textbf{Notation.} For  C*-algebras $A,B$ we denote by $\mathrm{Hom}(A,B)$ the space
of full $*$-homomorphisms from $A$ to $B$
and by $\mathrm{End}(A)$ the space of full $*$-endomorphisms of $A$.
Recall that a $*$-homomorphism $\varphi:A\to B$ is full if for any nonzero element $a\in A$, the closed ideal generated by $\varphi(a)$
 is equal to $B$. If $A$ is a unital C*-algebra, we denote by $K_0(A)_{+}$ the subsemigroup of $K_0(A)$
 consisting of classes $[p]$ of full projections $p\in A\otimes \K$.

\subsection{Contractibility of $\Aut{A}$}
While it is known from \cite[Cor.3.1]{paper:DadarlatKK1} that $\Aut{A}$ is weakly contractible in the point norm-topology, we can strengthen this result by combining it with the idea of half-flips from \cite{paper:TomsWinter}.

Let $B$ be a separable $C^*$-algebra and let $e\in B$ be a projection.
Consider the following spaces of $*$-endomorphisms of $B$ endowed with the point-norm topology.
\[\mathrm{End}_{e}(B)=\{\alpha \in \mathrm{End}(B) \colon\, \alpha(e)=e\}, \quad \mathrm{Aut}_{e}(B)=\{\alpha \in \mathrm{Aut}(B) \colon\, \alpha(e)=e\}.\]
Let  $l,r:B \to B\otimes B$ (minimal tensor product) be defined by $l(b) = b \otimes e$ and $r(b) = e \otimes b$.
\begin{lemma}\label{lemma:Aut(B,e)contractible}  Suppose that there is a continuous map $\psi:[0,1]\to \mathrm{Hom}(B, B\otimes B)$ such that $\psi(0)=l$, $\psi(1)=r$, $\psi(t)(e)=e\otimes e$ and $\psi(t)$ is a $*$-isomorphism for all $t\in (0,1)$. Then $\mathrm{Aut}_{e}(B)$ and $\mathrm{End}_{e}(B)$ are contractible spaces.
\end{lemma}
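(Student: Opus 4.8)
The plan is to build an explicit contraction of $\mathrm{End}_e(B)$, and simultaneously of $\mathrm{Aut}_e(B)$, directly from the path $\psi$. Since for each $t\in(0,1)$ the map $\psi(t)\colon B\to B\otimes B$ is an isomorphism carrying $e$ to $e\otimes e$, I would define $H\colon[0,1]\times\mathrm{End}_e(B)\to\mathrm{End}_e(B)$ by
\[
H(t,\alpha)=\psi(t)^{-1}\circ(\alpha\otimes\mathrm{id}_B)\circ\psi(t)\qquad(t\in(0,1)),
\]
together with $H(0,\alpha)=\alpha$ and $H(1,\alpha)=\mathrm{id}_B$. The map $\alpha\otimes\mathrm{id}_B$ is a well-defined $*$-endomorphism of the minimal tensor product, and since $\alpha(e)=e$ it fixes $e\otimes e$; hence $H(t,\alpha)(e)=\psi(t)^{-1}(e\otimes e)=e$, and $H(t,\alpha)$ is again a full $*$-endomorphism. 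When $\alpha$ is an automorphism the same formula produces an automorphism (a conjugate of $\alpha\otimes\mathrm{id}_B$ by the isomorphism $\psi(t)$), so the construction restricts to $\mathrm{Aut}_e(B)$ without any change.

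The first continuity point is that $t\mapsto\psi(t)^{-1}$ is point-norm continuous on $(0,1)$. The key observation, used repeatedly, is that $*$-homomorphisms are contractive: for $y=\psi(t)(x)$ one has
\[
\|\psi(s)^{-1}(y)-x\|=\|\psi(s)^{-1}\bigl(y-\psi(s)(x)\bigr)\|\le\|y-\psi(s)(x)\|\xrightarrow[s\to t]{}0 .
\]
Combined with the joint continuity of composition of uniformly contractive maps and the point-norm continuity of $\alpha\mapsto\alpha\otimes\mathrm{id}_B$, this makes $H$ continuous on the open strip $(0,1)\times\mathrm{End}_e(B)$.

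The main obstacle is continuity at the endpoints $t=0,1$, where $\psi(t)$ degenerates to the non-invertible maps $l,r$ and $\psi(t)^{-1}$ admits no limit. I would circumvent this by never estimating $\psi(t)^{-1}$ in isolation, using it only inside the full composite together with the identity $\psi(t)^{-1}\psi(t)=\mathrm{id}_B$. For $t_n\to0$ and $\alpha_n\to\alpha$, inserting $x=\psi(t_n)(\alpha(y))$ and invoking contractivity yields
\[
\|H(t_n,\alpha_n)(y)-\alpha(y)\|\le\|(\alpha_n\otimes\mathrm{id})(\psi(t_n)(y))-\alpha(y)\otimes e\|+\|\alpha(y)\otimes e-\psi(t_n)(\alpha(y))\| ,
\]
and both terms vanish in the limit because $\psi(t_n)(b)\to b\otimes e$ for every $b$ and $\alpha_n\to\alpha$ point-norm. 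The symmetric computation at $t_n\to1$, now using $\psi(t_n)(b)\to e\otimes b$ and $\alpha_n(e)=e$ so that $(\alpha_n\otimes\mathrm{id})(e\otimes b)=e\otimes b$, gives $H(t_n,\alpha_n)(y)\to y$, i.e.\ convergence to $\mathrm{id}_B$.

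Since $H(0,\cdot)$ is the identity of the space and $H(1,\cdot)$ is the constant map $\mathrm{id}_B$, the map $H$ is the desired contracting homotopy, and as noted the identical argument applies verbatim to $\mathrm{Aut}_e(B)$. The whole argument is elementary once one isolates the two structural facts it rests on, contractivity of $*$-homomorphisms and point-norm continuity of $\psi$; the only genuinely delicate step is the endpoint continuity, which I expect to be where the care is needed.
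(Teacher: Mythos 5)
Your proposal is correct and coincides with the paper's own argument: the same homotopy $H(t,\alpha)=\psi(t)^{-1}\circ(\alpha\otimes\mathrm{id}_B)\circ\psi(t)$ with the same endpoint values, and the same device of absorbing $\psi(t)^{-1}$ via contractivity so that the endpoint continuity reduces to $\psi(t_n)(b)\to l(b)$ (resp. $r(b)$) plus point-norm convergence of $\alpha_n$. The only cosmetic difference is that the paper phrases the conclusion as a strong deformation retraction (noting $H(t,\mathrm{id}_B)=\mathrm{id}_B$), which your version gives as well.
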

\begin{proof} First we deal with $\mathrm{Aut}_{e}(B)$.
 Consider $H \colon I \times \mathrm{Aut}_{e}(B) \to \mathrm{Aut}_{e}(B)$ defined by
\begin{equation}\label{eqn:basic-homotopy}
 H(t,\alpha) =
	\begin{cases}
		\alpha & \text{for } t = 0 \\
		 \psi(t)^{-1} \circ (\alpha \otimes \id{B}) \circ \psi(t) & \text{for } 0 < t <1\ ,\\
		\id{B} & \text{for } t = 1\ .
	\end{cases}
\end{equation}
Note that $H(t,\alpha)(e)=e$ since $\psi(t)(e)=e\otimes e$.
Observe that $(\alpha \otimes \id{B}) \circ l = l \circ \alpha$. It is straightforward to verify the continuity of $H$
at points $(\alpha,t)$ with $t\ne 0$ and $t\neq 1$.
Let $b \in B$, let $t_n \in (0,1)$ be a net converging to $0$ and let $\alpha_i \in \mathrm{Aut}_{e}(B)$ be a net converging to $\alpha \in \mathrm{Aut}_{e}(B)$. The estimate,
\begin{align*}
	& \lVert (\psi(t_n)^{-1} \circ (\alpha_i \otimes \id{B}) \circ \psi(t_n))(b) - \alpha(b) \rVert
	=  \lVert (\alpha_i \otimes \id{B}) \circ \psi(t_n)(b) - \psi(t_n) \circ \alpha(b) \rVert \\
	\leq \quad & \lVert (\alpha_i \otimes \id{B}) \circ \psi(t_n)(b) - (\alpha_i \otimes \id{B}) \circ l(b) \rVert + \lVert (\alpha_i \otimes \id{B}) \circ l(b) - (\alpha \otimes \id{B}) \circ l(b) \rVert  \\
	+ \quad & \lVert (\alpha \otimes \id{B}) \circ l(b) - \psi(t_n) \circ \alpha(b) \rVert \\
	\leq \quad & \lVert \psi(t_n)(b) - l(b) \rVert + \lVert \alpha_i(b) - \alpha(b) \rVert + \lVert (\alpha \otimes \id{B}) \circ l(b) - \psi(t_n) \circ \alpha(b) \rVert
\end{align*}
implies the  continuity of $H$ at $(\alpha,0)$. An analogous argument using $(\alpha \otimes \id{B}) \circ r = r$
shows continuity at $(\alpha,1)$. We also have $H(t,\id{B})=\id{B}$ for all $t\in[0,1]$.
Thus, $H$ provides a (strong) deformation retraction of $\mathrm{Aut}_{e}(B)$ to $\id{B}$.
The argument for the contractibility of $\mathrm{End}_{e}(B)$ is entirely similar. One observes that
 equation~\eqref{eqn:basic-homotopy} also defines a map $H \colon I \times \mathrm{End}_{e}(B) \to \mathrm{End}_{e}(B)$ which gives a deformation retraction of $\mathrm{End}_{e}(B)$ to $\id{B}$..
\end{proof}

\begin{theorem} \label{thm:AutAcontractible}
Let $A$ be a  strongly self-absorbing $C^*$-algebra. Then $\Aut{A}$ and  $\mathrm{End}_{1_A}(A)$ are contractible spaces.
\end{theorem}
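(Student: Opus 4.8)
The plan is to apply Lemma~\ref{lemma:Aut(B,e)contractible} with $B = A$ and $e = 1_A$. Since every $*$-automorphism of a unital $C^*$-algebra fixes the unit, we have $\mathrm{Aut}_{1_A}(A) = \Aut{A}$, while $\mathrm{End}_{1_A}(A)$ is exactly the space in the statement. Moreover $A$ is simple by Theorem~\ref{thm:cancellation}(a), so $A \otimes A$ is simple and every nonzero $*$-homomorphism into it is automatically full. Thus everything reduces to producing a continuous path $\psi \colon [0,1] \to \mathrm{Hom}(A, A \otimes A)$ with $\psi(0) = l$, $\psi(1) = r$, $\psi(t)(1_A) = 1_A \otimes 1_A$, and $\psi(t)$ a $*$-isomorphism for $t \in (0,1)$, where $l(a) = a \otimes 1_A$ and $r(a) = 1_A \otimes a$.

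The one genuinely delicate point is that $l$ and $r$ are \emph{not} surjective, so one cannot reach them by conjugating a non-isomorphism; instead I would route the path through a genuine isomorphism and let it degenerate to $l$ and $r$ only in the point-norm limit at the two endpoints. Concretely, fix a $*$-isomorphism $\psi_0 \colon A \to A \otimes A$, which exists because $A$ is strongly self-absorbing. Both $(\psi_0, l)$ and $(\psi_0, r)$ are pairs of unital $*$-homomorphisms $A \to A \otimes A$, so Theorem~\ref{thm:cancellation}(b) (with $B = A$) supplies continuous paths of unitaries $(u_t)_{t \in [0,1)}$ and $(v_t)_{t \in [0,1)}$ in $A \otimes A$ with $u_0 = v_0 = 1$ such that $\mathrm{Ad}(u_t) \circ \psi_0 \to l$ and $\mathrm{Ad}(v_t) \circ \psi_0 \to r$ pointwise in norm as $t \to 1$.

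I would then assemble $\psi$ by concatenation: on $[0, 1/2]$ run the first family backwards, setting $\psi(0) = l$ and $\psi(t) = \mathrm{Ad}(u_{1-2t}) \circ \psi_0$ for $t \in (0,1/2]$; on $[1/2,1]$ run the second family, setting $\psi(t) = \mathrm{Ad}(v_{2t-1}) \circ \psi_0$ for $t \in [1/2, 1)$ and $\psi(1) = r$. The two halves agree at $t = 1/2$, where both give $\mathrm{Ad}(1) \circ \psi_0 = \psi_0$, so $\psi$ is well defined. For $t \in (0,1)$ each $\psi(t)$ is an inner conjugate of the isomorphism $\psi_0$, hence itself a $*$-isomorphism; and since $\psi_0$, $l$, $r$ are unital and conjugation by a unitary preserves the unit, every $\psi(t)$ satisfies $\psi(t)(1_A) = 1_A \otimes 1_A$.

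It remains to verify continuity, which is exactly where the endpoint degeneration is controlled. On each open half-interval the map $t \mapsto u_t \psi_0(a) u_t^*$ is norm-continuous for fixed $a$ because $t \mapsto u_t$ is a norm-continuous unitary path, giving point-norm continuity there. Continuity at $t = 0$ and $t = 1$ is precisely the content of the limits $\mathrm{Ad}(u_t)\circ\psi_0 \to l$ and $\mathrm{Ad}(v_t)\circ\psi_0 \to r$ from Theorem~\ref{thm:cancellation}(b), since convergence in the point-norm topology of $\mathrm{Hom}(A, A\otimes A)$ means exactly pointwise-in-$a$ norm convergence. With $\psi$ in hand, Lemma~\ref{lemma:Aut(B,e)contractible} immediately yields that $\Aut{A} = \mathrm{Aut}_{1_A}(A)$ and $\mathrm{End}_{1_A}(A)$ are contractible. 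The main obstacle is not a single estimate but the structural observation that the interior of the path must consist of honest isomorphisms while the endpoints are the non-surjective embeddings $l$ and $r$; this is what forces the detour through $\psi_0$ together with the use of point-norm (rather than uniform) convergence in Theorem~\ref{thm:cancellation}(b), which is where strong self-absorption is essential.
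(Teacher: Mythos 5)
Your proposal is correct and follows essentially the same route as the paper: fix an isomorphism $\psi_0\colon A\to A\otimes A$, use Theorem~\ref{thm:cancellation}(b) twice to produce unitary paths conjugating $\psi_0$ towards $l$ and towards $r$, juxtapose the two resulting paths of isomorphisms to get the homotopy required by Lemma~\ref{lemma:Aut(B,e)contractible} with $e=1_A$, and conclude. The details you supply (unitality of each $\psi(t)$, agreement at the midpoint, and continuity at the endpoints being exactly the point-norm convergence furnished by Theorem~\ref{thm:cancellation}(b)) match the paper's argument.
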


\begin{proof} \label{pf:AutAcontractible}
Let $l, r \colon A \to A \otimes A$ be the maps $l(a) = a \otimes 1_A$ and $r(a) = 1_A \otimes a$. Fix an isomorphism $\psi \colon A \to A \otimes A$. It follows from  Theorem~\ref{thm:cancellation}(b) that there exists a continuous path of unitaries $u\colon (0, 1] \to U(A \otimes A)$ with $u(1) = 1_{A \otimes A}$ such that
\[
	\lim_{t \to 0}\lVert u(t)\,\psi(a)\,u(t)^* - l(a) \rVert = 0\ .
\]
Define $\psi_l \colon (0,1] \to {\rm Iso}(A, A \otimes A)$ by $\psi_l(t) = {\rm Ad}_{u(t)} \circ \psi$. Likewise there is a continuous path of unitaries $v \colon [0,1) \to U(A \otimes A)$ with $v(0) = 1_{A \otimes A}$ and such that
\[
	\lim_{t \to 1}\lVert v(t)\,\psi(a)\,v(t)^* - r(a) \rVert = 0\ .
\]
Define $\psi_r \colon [0,1) \to {\rm Iso}(A, A \otimes A)$ by $\psi_r(t) = {\rm Ad}_{v(t)} \circ \psi$. By juxtaposing the paths $\psi_l$ and $\psi_r$ we obtain a homotopy from $l$ to $r$ which satisfies the assumptions of Lemma~\ref{lemma:Aut(B,e)contractible} with $e=1_A$. It follows  that $\Aut{A}$ and  $\mathrm{End}_{1_A}(A)$ are contractible spaces.\end{proof}
The following is a minor variation of a result of Blackadar \cite[p.57]{book:BlackadarK-theory} and Herman and Rosenberg \cite{paper:Herman-Rosenberg}.
\begin{lemma}\label{lem:AF-paths}
Let $A$ and $B$  be separable AF-algebras and let $e\in A$ be a projection.
Suppose that $\varphi,\psi:A\to B$ are two $*$-homomorphisms such that $\varphi(e)=\psi(e)$
and $\varphi_*=\psi_*:K_0(A)\to K_0(B)$.
Then there is a continuous map $u:[0,1)\to U(B^{+})$ with $u(0)=1$, $[u(t),\psi(e)]=0$ for all $t\in [0,1)$ and such that
\(\lim_{t\to 1} \|u(t)\psi(a)u(t)^*-\varphi(a)\|=0\) for all $a\in A$.\end{lemma}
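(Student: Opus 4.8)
The plan is to reduce the statement to one about finite-dimensional subalgebras, where the hypothesis $\varphi_* = \psi_*$ forces an \emph{exact} unitary equivalence, and then to assemble the resulting local unitaries into a single continuous path. First I would fix a filtration $A = \overline{\bigcup_n A_n}$ by finite-dimensional $*$-subalgebras $A_1 \subseteq A_2 \subseteq \cdots$ arranged so that $e \in A_1$; this is possible by standard AF-theory, after conjugating an arbitrary filtration by a unitary close to $1$ so as to absorb the finite-dimensional algebra $C^*(e)$. Writing $\iota_n \colon A_n \hookrightarrow A$, the restrictions $\varphi\circ\iota_n$ and $\psi\circ\iota_n$ induce the same map $K_0(A_n) \to K_0(B)$. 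Decomposing $A_n = \bigoplus_i M_{n_i}$, this means $[\varphi(f_i)] = [\psi(f_i)]$ in $K_0(B)$ for a minimal projection $f_i$ in each block; cancellation of projections in the AF-algebra $B$ makes these projections Murray--von Neumann equivalent, and patching the implementing partial isometries (together with the equivalence $1 - \varphi(1_{A_n}) \sim 1 - \psi(1_{A_n})$ of complements in the unital AF-algebra $B^{+}$) yields a unitary $v_n \in U(B^{+})$ with $v_n\,\psi(x)\,v_n^* = \varphi(x)$ for all $x \in A_n$. This is the cited Blackadar--Hermann--Rosenberg input. Since $e \in A_n$ and $\varphi(e)=\psi(e)$, it forces $v_n\,\psi(e)\,v_n^* = \psi(e)$, i.e. $[v_n,\psi(e)]=0$, so the commutation with $\psi(e)$ is automatic.

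Next I would build the path from the $v_n$. Consecutive unitaries differ by $z_n := v_{n+1}^*v_n$, which commutes with $\psi(A_n)$, because both $v_n$ and $v_{n+1}$ conjugate $\psi|_{A_n}$ to $\varphi|_{A_n}$. The key technical input is that the relative commutant $C_n := \psi(A_n)'\cap B^{+}$ is again an AF-algebra: after conjugating the finite-dimensional $\psi(A_n)$ into the filtration of $B^{+}$ and using the conditional expectation obtained by averaging over the compact unitary group of $\psi(A_n)$, one identifies $C_n$ with an inductive limit of the (finite-dimensional) relative commutants in the filtration. Hence $U(C_n)$ is connected, so $z_n$ is joined to $1$ by a path $\zeta_n \colon [0,1] \to U(C_n)$. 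Setting $w_n(s) = v_{n+1}\,\zeta_n(s)$ produces a path from $v_n$ to $v_{n+1}$ along which, since $\zeta_n(s) \in \psi(A_n)'$, one has $w_n(s)\,\psi(x)\,w_n(s)^* = \varphi(x)$ \emph{exactly} for all $x \in A_n$; moreover $\zeta_n(s)$ and $v_{n+1}$ both commute with $\psi(e)\in\psi(A_n)$, whence so does $w_n(s)$.

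Finally I would glue these over a partition $0 = t_0 < t_1 < \cdots \to 1$. On $[0,t_1]$ I take a path from $1$ to $v_1$ inside the connected group $U(\{\psi(e)\}'\cap B^{+})$, where $\{\psi(e)\}'\cap B^{+} = \psi(e)B^{+}\psi(e) \oplus (1-\psi(e))B^{+}(1-\psi(e))$ is a sum of corners of $B^{+}$ and hence AF; this achieves $u(0)=1$ while keeping $u(t)$ commuting with $\psi(e)$. On each $[t_n,t_{n+1}]$ I follow a reparametrization of $w_n$. The endpoints match, so $u\colon[0,1)\to U(B^{+})$ is continuous with $u(0)=1$ and $[u(t),\psi(e)]=0$ throughout. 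For the limit, given $a\in A$ and $\varepsilon>0$, choose $a'\in A_m$ with $\|a-a'\|<\varepsilon$; for $t\geq t_m$ the unitary $u(t)$ conjugates $\psi$ to $\varphi$ exactly on $A_m$, so $\|u(t)\psi(a)u(t)^*-\varphi(a)\| \leq 2\|a-a'\| < 2\varepsilon$, which gives $\lim_{t\to1}\|u(t)\psi(a)u(t)^*-\varphi(a)\|=0$.

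I expect the main obstacle to be the path-assembly step rather than the finite-dimensional input. One must verify that the relative commutants $C_n$ are AF so that their unitary groups are connected, and simultaneously arrange that the \emph{entire} path — including the initial segment from $1$ to $v_1$ — commutes with $\psi(e)$ while still converging to the intertwining. The reason the argument closes without any quantitative estimates on the connecting paths is precisely that conjugating inside the commutant $\psi(A_n)'$ preserves the exact intertwining on $A_n$, so only the \emph{existence} of the paths $\zeta_n$, i.e. connectivity of $U(C_n)$, is needed.
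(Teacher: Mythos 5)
Your proposal is correct and follows essentially the same route as the paper: exact unitary intertwiners $v_n$ on each finite-dimensional stage (obtained from $\varphi_*=\psi_*$ plus cancellation), consecutive ones joined by paths in the relative commutant $\psi(A_n)'\cap B^{+}$, which is AF (the paper cites Herman--Rosenberg for this) and hence has connected unitary group, so that the exact intertwining on $A_n$ and the commutation with $\psi(e)$ persist along the glued path. Your separate initial segment in $\{\psi(e)\}'\cap B^{+}$ coincides with the paper's device of starting the filtration at $A_0=\mathbb{C}e$ with $u_0=1$.
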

\begin{proof} If $B$ is a nonunital $C^*$-algebra, we regard $B$ as a C*-subalgebra of its unitization  $B^{+}$.
Write $A$ as the closure of an increasing union of finite dimensional
C*-subalgebras $A_n \subset A_{n+1}$ with $A_0=\mathbb{C}e$.
Since $\phi_*=\psi_*$, for each $n\geq 0$ we find a unitary $u_n\in U(B^{+})$
such that $u_n\psi(x)u_n^*=\varphi(x)$ for all $x\in A_n$ and $u_0=1$. Observe that $w_n=u_{n+1}^*u_n$
is a unitary in the commutant $C_n$ of $\psi(A_n)$ in $B^{+}$. This commutant is known to be an AF-algebra, see \cite[Lemma 3.1]{paper:Herman-Rosenberg}. Therefore there is a continuous
path of unitaries $t\mapsto W_n(t)\in U(C_n)$, $t\in [n,n+1]$,  such that $W_n(n)=w_n$ and $W_n(n+1)=1$.
Define a continuous map $u:[0,\infty)\to U(B)$ by $u(t)=u_{n+1}W_n(t)$, $t\in [n,n+1]$. One verifies immediately
that $[u(t),\psi(e)]=0$ for all $t$ and that $u(t)\psi(x)u(t)^*=\varphi(x)$ for all $x\in A_n$ and
$t\in [n,n+1]$. It follows that \(\lim_{t\to \infty} \|u(t)\psi(a)u(t)^*-\varphi(a)\|=0\)
for all $a\in A$.
\end{proof}

\begin{theorem}\label{thm:stabContractible}
Let $A$ be a  strongly self-absorbing $C^*$-algebra and let $e \in \K$ be a rank-$1$ projection. Then
the stabilizer group \(
	\AutSt{A \otimes \K}{1 \otimes e} \)
and the space $\mathrm{End}_{1\otimes e}(A\otimes \K)$ are contractible.
\end{theorem}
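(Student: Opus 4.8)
The plan is to apply Lemma~\ref{lemma:Aut(B,e)contractible} with $B = A \otimes \K$ and fixed projection $p = 1 \otimes e$ (note that $B$ is separable). It then suffices to produce a point-norm continuous path $\psi \colon [0,1] \to \mathrm{Hom}(B, B \otimes B)$ with $\psi(0) = l$ and $\psi(1) = r$, where $l(b) = b \otimes p$ and $r(b) = p \otimes b$, satisfying $\psi(t)(p) = p \otimes p$ for all $t$ and with $\psi(t)$ a $*$-isomorphism for $t \in (0,1)$. Identifying $B \otimes B = A \otimes \K \otimes A \otimes \K$ and letting $\sigma \colon A \otimes A \otimes \K \otimes \K \xrightarrow{\sim} A \otimes \K \otimes A \otimes \K$ be the flip interchanging the two middle tensor factors, one checks on elementary tensors that $l = \sigma \circ (l_A \otimes l_\K)$ and $r = \sigma \circ (r_A \otimes r_\K)$, where $l_A(a) = a \otimes 1$, $r_A(a) = 1 \otimes a$, $l_\K(k) = k \otimes e$ and $r_\K(k) = e \otimes k$. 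This suggests assembling $\psi$ as $\psi(t) = \sigma \circ (\Psi^A(t) \otimes \Psi^\K(t))$ out of separately built paths on the two tensor factors.

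For the $\K$-factor I would construct a path $\Psi^\K \colon [0,1] \to \mathrm{Hom}(\K, \K \otimes \K)$ running from $l_\K$ to $r_\K$ through isomorphisms and with $\Psi^\K(t)(e) = e \otimes e$. Fix any $*$-isomorphism $\K \to \K \otimes \K$; since $e \otimes e$ is a rank-one projection in $\K \otimes \K$, conjugating by a suitable unitary in $(\K \otimes \K)^+$ yields an isomorphism $\psi^\K \colon \K \to \K \otimes \K$ with $\psi^\K(e) = e \otimes e$. Now $\K$ and $\K \otimes \K$ are separable AF-algebras, $l_\K(e) = e \otimes e = \psi^\K(e)$, and on $K_0(\K) \cong \Z$, generated by $[e]$, we have $(l_\K)_* = (\psi^\K)_*$, as both send $[e]$ to the generator $[e \otimes e]$ of $K_0(\K \otimes \K)$. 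Lemma~\ref{lem:AF-paths} therefore supplies a continuous path of unitaries $u(t) \in U((\K \otimes \K)^+)$ commuting with $e \otimes e$ that conjugates $\psi^\K$ into $l_\K$ in the limit, and the analogous application with $r_\K$ gives a second such path. Juxtaposing these and inserting $\psi^\K$ at the midpoint produces $\Psi^\K$, whose interior consists of the isomorphisms $\mathrm{Ad}_{u(t)} \circ \psi^\K$, each fixing $e \otimes e$, and whose endpoints are $l_\K$ and $r_\K$.

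For the $A$-factor the required path $\Psi^A \colon [0,1] \to \mathrm{Hom}(A, A \otimes A)$ from $l_A$ to $r_A$ through isomorphisms is precisely the one built in the proof of Theorem~\ref{thm:AutAcontractible}, and, being unital, it fixes $1 \mapsto 1 \otimes 1$. Then $\psi(t) = \sigma \circ (\Psi^A(t) \otimes \Psi^\K(t))$ satisfies $\psi(0) = l$, $\psi(1) = r$, $\psi(t)(p) = \sigma(1 \otimes 1 \otimes e \otimes e) = p \otimes p$, and is a $*$-isomorphism for $t \in (0,1)$ as a tensor product of isomorphisms followed by $\sigma$; the contractibility of $\AutSt{A \otimes \K}{1 \otimes e}$ and $\mathrm{End}_{1 \otimes e}(A \otimes \K)$ then follows from Lemma~\ref{lemma:Aut(B,e)contractible}. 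The routine point I would verify carefully is that $t \mapsto \Psi^A(t) \otimes \Psi^\K(t)$ is point-norm continuous, which follows on elementary tensors from $\|\Psi^A(t_n)(a) \otimes \Psi^\K(t_n)(k) - \Psi^A(t)(a) \otimes \Psi^\K(t)(k)\| \leq \|\Psi^A(t_n)(a) - \Psi^A(t)(a)\|\,\|k\| + \|a\|\,\|\Psi^\K(t_n)(k) - \Psi^\K(t)(k)\|$ and extends to all of $A \otimes \K$ by density and contractivity of $*$-homomorphisms. The genuinely essential idea, and the main obstacle, is the $\K$-factor construction: one cannot interpolate directly between the non-surjective embeddings $l_\K$ and $r_\K$, so the path must be routed through an honest isomorphism $\psi^\K$ in order that the interior consist of isomorphisms, with Lemma~\ref{lem:AF-paths} supplying the connecting unitaries that respect the projection.
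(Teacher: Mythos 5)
Your proposal is correct and follows essentially the same route as the paper: both reduce to Lemma~\ref{lemma:Aut(B,e)contractible} via the factorization $\hat{\psi}(t)=\sigma\circ(\Psi^A(t)\otimes\Psi^\K(t))$, reuse the $A$-factor path from Theorem~\ref{thm:AutAcontractible}, and build the $\K$-factor path from $l_\K$ to $r_\K$ by routing through a fixed isomorphism $\psi^\K$ with $\psi^\K(e)=e\otimes e$ and applying Lemma~\ref{lem:AF-paths} to each half. Your explicit verifications of the $K_0$ hypothesis and of the point-norm continuity of the tensored path are details the paper leaves implicit, but the argument is the same.
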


\begin{proof}
We shall use the following consequence of Lemma~\ref{lem:AF-paths}.
Let $\varphi_0,\varphi_1:\K\to \K \otimes \K$
be two $*$-homomorphisms such that $\varphi_0(e)=\varphi_1(e)=e\otimes e$.
Fix a $*$-isomorphism $\psi_{1/2}:\K\to \K \otimes \K$ with $\psi_{1/2}(e)=e\otimes e$. By applying Lemma~\ref{lem:AF-paths} to both pairs $(\varphi_i,\psi_{1/2})$, $i=0,1$, we
find a continuous map $\psi:[0,1]\to \mathrm{Hom}(\K, \K\otimes \K)$ such that $\psi(0)=\varphi_0$, $\psi(1)=\varphi_1$, $\psi(t)(e)=e\otimes e$ and $\psi(t)$ is a $*$-isomorphism for all $t\in (0,1)$.

We proceed in much the same way as  the proof of Theorem \ref{thm:AutAcontractible}, by applying
Lemma~\ref{lemma:Aut(B,e)contractible}.
Let $l, r \colon A \to A \otimes A$ be defined by $l(a) = a \otimes 1_A$ and $r(a) = 1_A \otimes a$. We have seen in the proof of Theorem \ref{thm:AutAcontractible}
that there is a continuous map $\psi:[0,1]\to \mathrm{Hom}(A, A\otimes A)$ such that $\psi(0)=l$, $\psi(1)=r$, and $\psi(t)$ is a $*$-isomorphism for all $t\in (0,1)$.

Let $l_{\K}, r_{\K} \colon \K \to \K \otimes \K$ be given by $l_{\K}(x) = x \otimes e$, $r_{\K}(x) = e \otimes x$. Using the remark from the beginning of the proof, we find a continuous map $\psi_\K:[0,1]\to \mathrm{Hom}(\K, \K\otimes \K)$ such that $\psi_\K(0)=l_\K$, $\psi_\K(1)=r_\K$, $\psi_\K(t)(e)=e\otimes e$ and $\psi_\K(t)$ is a $*$-isomorphism for all $t\in (0,1)$.

Let $A_{\K} = A \otimes \K$ and consider the $*$-homomorphisms $\hat{l},\ \hat{r} \colon A_{\K} \to A_{\K} \otimes A_{\K}$ with
\[
	\hat{l} = \sigma \circ (l \otimes l_{\K}) \quad \text{and} \quad \hat{r} = \sigma \circ (r \otimes r_{\K})\ ,
\]
where $\sigma \colon A \otimes (A \otimes \K) \otimes \K \to (A \otimes \K) \otimes (A \otimes \K)$ interchanges the second and third tensor factor. Note that
$\hat{l}(a\otimes x)=(a\otimes x) \otimes (1_A \otimes e)$ and $\hat{r}(a\otimes x)=(1_A\otimes e) \otimes (a\otimes x) $ for $a\in A$ and $x\in \K$.
Define $\hat{\psi}:[0,1]\to \mathrm{Hom}(A_{\K},A_{\K}\otimes A_{\K})$ by
$\hat{\psi}=\sigma\circ (\psi\otimes \psi_\K)$. Then $\hat{\psi}(0)=\hat{l}$,
$\hat{\psi}(1)=\hat{r}$, $\hat{\psi}(t)(1_A\otimes e)=(1_A\otimes e)\otimes (1_A\otimes e)$ and $\hat{\psi}(t)$ is an isomorphism for all $t\in (0,1)$. It follows by
Lemma~\ref{lemma:Aut(B,e)contractible} that $\AutSt{A \otimes \K}{1 \otimes e}$ and $\mathrm{End}_{1\otimes e}(A\otimes \K)$ are contractible.
\end{proof}
\begin{remark} Taking $A=\C$, Thm.~\ref{thm:stabContractible} reproves the contractibility of $U(H)$
in the strong topology.
\end{remark}

\subsection{The homotopy type of $\uAut{A \otimes \K}$ }
For a  $C^*$-algebra $B$ we denote by $\uAut{B}$ and $\uEnd{B}$ the path-connected component of the identity.
We have seen in Theorem~\ref{thm:AutAcontractible} that for a strongly self-absorbing $C^*$-algebra $A$ the space $\Aut{A}$ is contractible. In particular, it has the homotopy type of a CW-complex. In this section, we will extend the latter statement to the space $\uAut{A \otimes \K}$, which is no longer contractible, but has a very interesting homotopy type. We start by considering the subspace of projections in $A \otimes \K$, denoted by $\Proj{A \otimes \K}$.

\begin{lemma}\label{lem:ProjCW}
Let $B$ be a $C^*$-algebra. The space $\Proj{B}$ has the homotopy type of a CW-complex.
\end{lemma}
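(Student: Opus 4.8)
The plan is to realize $\Proj{B}$ as a metrizable absolute neighborhood retract (ANR) and then invoke the classical theorem of Milnor that every metrizable ANR has the homotopy type of a CW-complex. Since $\Proj{B}$ carries the norm topology as a subspace of the Banach space $B$, it is automatically metrizable, so all of the content lies in establishing the ANR property.

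First I would exhibit $\Proj{B}$ as a retract of an open subset of the real Banach space $B_{\mathrm{sa}}$ of self-adjoint elements. Let $U \subseteq B_{\mathrm{sa}}$ be the set of those $x$ for which $\tfrac12$ does not lie in the spectrum $\mathrm{sp}(x)$, computed in the unitization $B^{+}$. Then $U$ is open, since $x \mapsto x - \tfrac12 1$ maps it into the open set of invertibles of $B^{+}$, and $U$ contains every projection, as projections have spectrum in $\{0,1\}$. For $x \in U$ the characteristic function $\chi$ of $(\tfrac12, \infty)$ is continuous on $\mathrm{sp}(x)$ and vanishes at $0$, so $r(x) := \chi(x)$ is a well-defined projection lying in $B$ itself, and $r(p) = p$ for every projection $p$. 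Thus $r$ is a set-theoretic retraction $U \to \Proj{B}$.

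The heart of the argument is the continuity of $r$, for which I would use the standard local-replacement trick in the continuous functional calculus. Given $x_0 \in U$, choose $\delta > 0$ with $(\tfrac12 - \delta, \tfrac12 + \delta) \cap \mathrm{sp}(x_0) = \emptyset$; since for self-adjoint elements the spectrum varies upper-semicontinuously, on a norm-neighborhood of $x_0$ inside $U$ every spectrum avoids $(\tfrac12 - \tfrac{\delta}{2}, \tfrac12 + \tfrac{\delta}{2})$. On that neighborhood one may replace $\chi$ by a fixed continuous function $g$ vanishing at $0$ and agreeing with $\chi$ outside this interval. As $y \mapsto g(y)$ is norm-continuous for a fixed continuous $g$ (approximate $g$ uniformly by polynomials on a compact interval containing all the relevant spectra), and $g(y) = r(y)$ throughout the neighborhood, $r$ is continuous at $x_0$.

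It then remains only to assemble the ANR conclusion from standard retract theory: $B_{\mathrm{sa}}$ is convex, hence an absolute retract and in particular an ANR; open subsets of ANRs are ANRs, so $U$ is an ANR; and a retract of an ANR is an ANR, whence $\Proj{B}$ is an ANR. Being metrizable, it has the homotopy type of a CW-complex. The one genuinely technical point is the continuity of $r$ above, everything else being formal; the only subtlety to watch is the non-unital case, where one keeps all cut-off functions vanishing at $0$ so that functional calculus returns elements of $B$ rather than $B^{+}$, while spectra are consistently computed in $B^{+}$.
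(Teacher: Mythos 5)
Your proof is correct and takes essentially the same route as the paper's: both exhibit $\Proj{B}$ as a retract of the open set $U\subset B_{\mathrm{sa}}$ of self-adjoint elements avoiding $\tfrac12$ in their spectrum, the retraction being functional calculus with the characteristic function of $(\tfrac12,\infty)$. The only difference is the final topological citation --- the paper observes that $\Proj{B}$ is dominated by an open subset of a Banach space and quotes Lundell--Weingram, while you package the same conclusion through ANR theory and Milnor's theorem --- and your explicit local-replacement argument for the continuity of the retraction supplies a step the paper leaves implicit.
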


\begin{proof}
Let $B_{sa}$ be the real Banach space of self-adjoint elements in $B$. Consider the subset
$U$ of $B_{sa}$ consisting of all elements which do not have $1/2$ in the spectrum.
Since invertibility is an open condition, $U$ is an open subset of $B_{sa}$ and therefore has the homotopy type of a CW-complex by \cite[Cor.5.5, p.134]{book:LundellWeingram}. Since $\sigma(p) \subset \{0,1\}$ for any projection $p \in B$, we have $\Proj{B} \subset U$. Let $f$ be  the characteristic function
of the interval $(\frac{1}{2},\infty)$.
By  functional calculus, $f$ induces a continuous map
\(
	 U \to \Proj{B}, \,a \mapsto f(a)\ ,
\)
which restricts to the identity on $\Proj{B}$. Thus, $\Proj{B}$ is dominated by a space having the homotopy type of a CW-complex. By \cite[Cor.3.9, p.127]{book:LundellWeingram} it is homotopy equivalent to a CW-complex itself.
\end{proof}

Let $e$ be a rank-$1$ projection in $\K$. We define $\uProj{A \otimes \K}$ to be the connected component of $1 \otimes e \in \Proj{A \otimes \K}$. It does not depend on the choice of $e$ as long as the rank of $e$ is equal to $1$.

\begin{lemma}\label{lem:AutAKfibration}
Let $A$ be a unital $C^*$-algebra and let $e \in \K$ be a rank-$1$ projection. Then {the maps}
\(
	\uAut{A \otimes \K} \to \uProj{A \otimes \K}
\)
and
\(
	\uEnd{A \otimes \K} \to \uProj{A \otimes \K}
\)
{which send $\alpha$ to $\alpha(1 \otimes e)$ are locally trivial fiber bundles over a paracompact base space and therefore Hurewicz fibrations.}
\end{lemma}

\begin{proof}\label{pf:AutAKfibration}
This is a particular case of a more general result, which we will prove for $\uEnd{A \otimes \K}$. The proof for the sequence of automorphism groups is entirely analogous. Let $B$ be a C*-algebra, let $q\in \Proj{B}$ and let
$\uProj{B}$ be the path-component of $q$. Then
\(
	\pi: \uEnd{B} \to \uProj{B}
\), $\pi(\alpha)=\alpha(q)$
 is in fact a locally trivial bundle with fiber $\EndSt{B}{q}$.
 The map $\pi $  is well-defined. Indeed, if $\alpha$ is homotopic to $\id{B}$, then the projection $\alpha(q)$ is connected to $q$ by a continuous path in $\Proj{B}$.

Let $U_0(B^+)$ denote  the path-component of $1$ in the unitary group of the unitization of $B$.
 Thus, for $u \in U_0(B^+)$ we have ${\rm Ad}_u \in \uAut{B} \subseteq \uEnd{B}$. By definition any $p \in \uProj{B}$ is homotopic to $q$. Therefore $p$ and $q$ are also unitarily equivalent via a unitary $u \in U_0(B^+)$.
 Since\ $\pi({\rm Ad}_u) = p$ it follows that $\pi$ is surjective. Let $p_0 \in \uProj{B}$ and let $U$ be its the open neighborhood given by
\(
	U = \{ p \in \uProj{B}\ |\ \lVert p - p_0 \rVert < 1 \}.
\)
If $p\in U$, then $x_p = p_0p + (1-p_0)(1-p)$   is an invertible element of $B^+.$ It follows that $u_p = x_p(x_p^*x_p)^{-\frac{1}{2}}$ is a unitary in $U_0(B^+)$ and the map $p \mapsto u_p$ is continuous with respect to the norm topologies \cite[Prop.II.3.3.4]{book:BlackadarOpAlg}. Choose a unitary $v\in U_0(B^+)$ such that $p_0 = vqv^*$. Then
\(
	\sigma_{p_0} \colon U \to \uAut{B}, \, p \mapsto {\rm Ad}_{u_p^*v}
\)
is a continuous section of $\pi$ over $U$ and $\kappa_U \colon U \times \EndSt{B}{q} \to \uEnd{B}$ defined by $\kappa_U(x,\beta) = \sigma_{p_0}(x) \circ \beta$ is a local trivialization with inverse $\tau_U \colon \uEnd{B} \to U \times \EndSt{B}{q}$ given by $\tau_U(\beta) = (\beta(q), \sigma_{p_0}(\beta(q))^{-1} \circ \beta)$. This completes the proof.
\end{proof}

\begin{corollary}\label{cor:CWcomplex}
Let $A$ be a  strongly self-absorbing $C^*$-algebra. Then the spaces $\uAut{A \otimes \K}$ and $\uEnd{A \otimes \K}$ both have the homotopy type of a CW-complex; they are homotopy equivalent to $\uProj{A \otimes \K} \simeq BU(A)$.
\end{corollary}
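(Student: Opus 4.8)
The plan is to deduce the whole statement from the two Hurewicz fibrations of Lemma~\ref{lem:AutAKfibration},
\[
\AutSt{A \otimes \K}{1\otimes e}\to \uAut{A\otimes\K}\to \uProj{A\otimes\K},\qquad
\EndSt{A\otimes\K}{1\otimes e}\to\uEnd{A\otimes\K}\to\uProj{A\otimes\K},
\]
whose fibers are contractible by Theorem~\ref{thm:stabContractible}. First I would record that both fibrations sit over the \emph{paracompact} base $\uProj{A\otimes\K}$, which is a subspace of the metrizable space $\Proj{A\otimes\K}$ since $A\otimes\K$ is separable, and that, being locally trivial, they are numerable. The key input is then Dold's theorem: a Hurewicz fibration over a paracompact base all of whose fibers are contractible is shrinkable, hence a homotopy equivalence. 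Applying this to both projection maps gives at once
\[
\uAut{A\otimes\K}\ \simeq\ \uProj{A\otimes\K}\ \simeq\ \uEnd{A\otimes\K}.
\]
The long exact homotopy sequence already shows that each projection is a weak equivalence; the point of Dold's theorem is to promote this to a genuine homotopy equivalence without presupposing that the total spaces are CW.

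For the CW statement I would argue as follows. By Lemma~\ref{lem:ProjCW} the space $\Proj{A\otimes\K}$ has the homotopy type of a CW-complex, and $\uProj{A\otimes\K}$ is one of its path-components. Since a homotopy equivalence induces a bijection of path-components restricting to homotopy equivalences on each, every path-component of a space with CW homotopy type again has CW homotopy type; hence $\uProj{A\otimes\K}$ does. Transporting this conclusion across the two homotopy equivalences above shows that $\uAut{A\otimes\K}$ and $\uEnd{A\otimes\K}$ have CW homotopy type as well. This is exactly the chain that must be organized to avoid circularity: one cannot instead invoke Whitehead's theorem to upgrade the weak equivalences, because that would require knowing \emph{in advance} that the total spaces are CW, which is part of what is being proved.

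It remains to identify $\uProj{A\otimes\K}$ with $BU(A)$. Writing $p=1\otimes e$, the corner $p(A\otimes\K)p$ is isomorphic to $A$, so the natural object acting on the fixed homotopy class of $p$ is the unitary group of $A$. The plan is to realize $\uProj{A\otimes\K}$, the unitary orbit of $p$, as the base of a principal bundle whose structure group is (homotopy equivalent to) that unitary group and whose total space is contractible, thereby exhibiting it as the classifying space $BU(A)$; for $A=\C$ this recovers the classical identification of the rank-one projections in $\K$ with $\mathbb{CP}^\infty=BU(1)$. Depending on how $BU(A)$ is set up, this step may instead be read simply as the definition of $BU(A)$.

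The main obstacle I anticipate is precisely the passage from weak to genuine homotopy equivalence handled by Dold's theorem: the fibers are contractible and the base is well-behaved, but the total spaces $\uAut{A\otimes\K}$ and $\uEnd{A\otimes\K}$ carry only the point-norm topology and are not a priori CW, so the argument must bypass Whitehead and rely on the numerability of the bundles supplied by Lemma~\ref{lem:AutAKfibration}. The secondary subtlety is making the identification with $BU(A)$ precise once its definition is fixed.
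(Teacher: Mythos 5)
Your argument for the CW statement and the homotopy equivalences is correct, but it takes a genuinely different route from the paper. The paper feeds the fibrations of Lemma~\ref{lem:AutAKfibration} into Sch\"on's theorem \cite[Thm.2]{paper:Schoen}: since the base $\uProj{A \otimes \K}$ and the (contractible) fiber $\AutSt{A \otimes \K}{1 \otimes e}$ both have CW homotopy type, so does the total space, and only then does the weak equivalence coming from the long exact sequence upgrade to a homotopy equivalence. You instead invoke Dold's theorem on numerable fibrations with contractible fibers to get shrinkability, hence a genuine homotopy equivalence $\uAut{A \otimes \K} \simeq \uProj{A \otimes \K} \simeq \uEnd{A \otimes \K}$ directly, and then transport the CW homotopy type of $\uProj{A \otimes \K}$ (obtained from Lemma~\ref{lem:ProjCW} via the path-component observation) across these equivalences. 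Both chains are sound: the hypotheses of Dold's theorem are met because the bundles of Lemma~\ref{lem:AutAKfibration} are locally trivial over a metrizable, hence paracompact, base and so numerable, and your remark about why Whitehead cannot be applied first is exactly the circularity the paper's use of Sch\"on is designed to avoid. Your version arguably buys a cleaner logical order (equivalence first, CW type second) at the cost of importing Dold's theorem, which the paper does not use; the paper's version needs Sch\"on but gets the CW statement without appealing to Lemma~\ref{lem:ProjCW} at this point.

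The one place where your proposal falls short of a proof is the identification $\uProj{A \otimes \K} \simeq BU(A)$, which you leave as a plan. The paper carries this out concretely: $U(M(A \otimes \K))$ acts transitively on $\uProj{A \otimes \K}$ by $u \mapsto u(1 \otimes e)u^*$, the stabilizer of $1 \otimes e$ splits off a copy of $U(A)$ acting on the corner $(1 \otimes e)(A \otimes \K)(1 \otimes e) \cong A$, contractibility of the resulting total space comes from Kuiper's theorem for Hilbert modules \cite{paper:CuntzHigson}, and local triviality of the orbit map is checked with the unitaries $p \mapsto u_p$ from the proof of Lemma~\ref{lem:AutAKfibration}. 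Your sketch names the right structure group and the right strategy, but without the contractibility input from \cite{paper:CuntzHigson} and the local sections, the claim that the orbit is a model for $BU(A)$ is not yet established; this step is not merely a matter of ``how $BU(A)$ is set up.''
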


\begin{proof}
By Lemma \ref{lem:AutAKfibration} and Theorem \ref{thm:stabContractible} the spaces $\uAut{A \otimes \K}$ and $\uEnd{A \otimes \K}$ are total spaces of fibrations where both base and fiber have the homotopy type of a CW-complex. Now the statement follows from \cite[Thm.2]{paper:Schoen}, Theorem \ref{thm:stabContractible}, except that it remains to argue that $\uProj{A \otimes \K} \simeq BU(A)$. This is certainly known.
The group $U(M(A \otimes \K))$ acts continuously and transitively on $\uProj{A \otimes \K}$ via $u \mapsto u(1 \otimes e)u^*$ with stabilizer
$U(A) \times U(M(A \otimes \K))$.
By the contractibility of $U(M(A \otimes \K))$ \cite{paper:CuntzHigson},
\(
	U(A)\to  U(M(A \otimes \K)) / 1_A\times U(M(A \otimes \K))\to \uProj{A \otimes \K}
\)
 is the \emph{universal} principal $U(A)$-bundle. One  uses  the map $p \mapsto u_p$ constructed in the proof of Lemma~\ref{lem:AutAKfibration} in order to verify local triviality. Thus $\uProj{A \otimes \K}$ is a model for $BU(A)$.
\end{proof}

\subsection{The homotopy type of $\Aut{A \otimes \K}$}
In this section we compute the homotopy classes $[X,\mathrm{End}(A\otimes \K)]$ and
$[X,\mathrm{Aut}(A\otimes \K)]$ in the case of  a  strongly self-absorbing C*-algebra $A$
and a compact metrizable space $X$, see Theorem~\ref{thm:MapsEndAK}. A similar topic was studied
for Kirchberg algebras in \cite{paper:DadarlatPiAutAK}. Throughout this subsection $e$ is a rank-$1$ projection in $\K$.
Given a unital ring $R$, we denote by $R^{\times}$ the group of units in $R$.  It is easily seen that $K_0(C(X) \otimes A)$  carries a ring structure with multiplication induced by an isomorphism $\psi: A\otimes \K\to A \otimes \K  \otimes A \otimes \K$ which maps $1_A \otimes e $ to $1_A \otimes e\otimes 1_A \otimes e$. This structure does not depend on the choice $\psi$ by Theorem~\ref{thm:stabContractible}. Let $\Endo{A \otimes \K}^{\times} = \{ \beta \in \Endo{A \otimes \K}\ |\ \beta(1 \otimes e) \text{ invertible in } K_0(A)\}$.
We identify the space of continuous maps from $X$ to $\Endo{A \otimes \K}$ with $\mathrm{Hom}(A\otimes \K, C(X)\otimes A \otimes \K)$ and with $\mathrm{End}_{C(X)}(C(X)\otimes A \otimes \K)$.
Similarly, we will identify the space of continuous maps from $X$ to $\Aut{A \otimes \K}$ with  $\mathrm{Aut}_{C(X)}(C(X)\otimes A \otimes \K)$.

\begin{lemma}\label{lem:homotopy-of-endos}
 Let $A$ and $B$ be unital separable C*-algebras. Suppose
that $p\in B\otimes \K$ is a  full projection such that there is a unital $*$-homomorphism $\theta:A\to p(B\otimes \K)p$.
  Then there is a $*$-homomorphism
 $\varphi:A\otimes \K \to B \otimes \K$ such that $\varphi(1\otimes e)=p$.
  If $\theta$ is an isomorphism, then
 we can choose $\varphi$ to be an isomorphism.
\end{lemma}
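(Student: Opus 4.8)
The plan is to manufacture $\varphi$ from $\theta$ by choosing a ``matrix system based at $p$'' inside $D := B\otimes\K$. Concretely, I would look for partial isometries $w_n\in D$ ($n\ge 1$) with $w_n^*w_n=p$, with $w_1=p$, and with mutually orthogonal range projections $p_n:=w_nw_n^*$, and then declare, on the matrix units $e_{ij}$ of $\K$ (with $e=e_{11}$),
\[
	\varphi(a\otimes e_{ij}) = w_i\,\theta(a)\,w_j^*\ .
\]
Using $w_j^*w_k=\delta_{jk}\,p$ (orthogonal ranges, common source $p$) and $\theta(a)\,p=\theta(a)$ (since $\theta$ is unital with $\theta(1_A)=p$), this is multiplicative and $*$-preserving on the algebraic matrices $A\odot M_\infty$, hence a $*$-homomorphism on $A\otimes\K$. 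The crucial point is that the normalisation $w_1=p$ forces $\varphi(1_A\otimes e)=w_1\theta(1_A)w_1^*=w_1pw_1^*=p_1=p$ automatically. Equivalently, the system $(w_n)$ is the same datum as an isomorphism $\Phi\colon pDp\otimes\K\xrightarrow{\cong}D$ with $\Phi(x\otimes e_{ij})=w_ixw_j^*$, and then $\varphi=\Phi\circ(\theta\otimes\id{\K})$; this factorisation makes it transparent that $\varphi$ is a $*$-homomorphism, and an \emph{isomorphism} as soon as $\theta$ is (for then $\theta\otimes\id{\K}$ is an isomorphism onto $pDp\otimes\K$).

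For the existence of the system I would first record that $D=B\otimes\K$ is separable, hence $\sigma$-unital, and stable, since $\K\otimes\K\cong\K$ gives $D\otimes\K\cong D$. For the homomorphism statement alone, fullness is not yet needed: because $D$ is stable its multiplier unit is properly infinite and absorbs any projection coming from $D$, so one can split off infinitely many mutually orthogonal projections each Murray--von Neumann equivalent to $p$, with $p$ itself serving as the first one; this produces the $w_n$ and hence $\varphi$ with $\varphi(1_A\otimes e)=p$. The full strength of the hypothesis enters only in the isomorphism assertion, where I additionally need the tiles to exhaust $D$, i.e.\ $\sum_{n\ge1}p_n=1$ in the strict topology of $M(D)$; this is precisely the content of Brown's stable isomorphism theorem applied to the full hereditary subalgebra $pDp$, which yields the isomorphism $\Phi\colon pDp\otimes\K\cong D$ above and thus a filling system.

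The step I expect to be the main obstacle is reconciling this filling-up with the \emph{exact} requirement $\varphi(1_A\otimes e)=p$ rather than merely $\varphi(1_A\otimes e)\sim p$. A Brown isomorphism $\Phi_0$ produces a first tile $f:=\Phi_0(p\otimes e_{11})$ which a priori is only guaranteed to agree with $p$ in $K_0(D)$, and for general (non-cancellative) $B$ equality of $K_0$-classes does not give Murray--von Neumann equivalence. The way I would resolve this is to take Brown's isomorphism in its inclusion-compatible form, so that the first tile is honestly Murray--von Neumann equivalent to $p$ (the stable isomorphism is built from a partial isometry and transports the inclusion $pDp\hookrightarrow D$, not just its $K_0$-class), and then to upgrade this to equality: in a stable C*-algebra two Murray--von Neumann equivalent projections of $D$ are unitarily equivalent in $M(D)$, so I may choose a unitary $u\in M(D)$ with $uf u^*=p$ and replace each $w_n$ by $uw_n$. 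This preserves $w_n^*w_n=p$, the orthogonality of the ranges, and the relation $\sum_n p_n=1$, while making $p_1=p$; equivalently it replaces $\Phi_0$ by $\mathrm{Ad}_u\circ\Phi_0$, an isomorphism with $\Phi(p\otimes e_{11})=p$. After this adjustment $\varphi=\Phi\circ(\theta\otimes\id{\K})$ satisfies $\varphi(1_A\otimes e)=p$ exactly, it is a $*$-homomorphism in general and an isomorphism whenever $\theta$ is, completing the proof.
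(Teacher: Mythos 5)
Your argument is correct, and it rests on the same two pillars as the paper's proof: Brown's stable isomorphism theorem applied to the full corner $p(B\otimes \K)p$, and the fact (Mingo's lemma, \cite[Lemma 1.10]{paper:Mingo}) that Murray--von Neumann equivalent projections in the stable algebra $B\otimes\K$ are conjugate by a unitary of $M(B\otimes\K)$ --- which is exactly what lets you upgrade $f\sim p$ to $f=p$ at the end. Two differences in packaging are worth recording. First, for the bare homomorphism statement you bypass Brown entirely: stability of $B\otimes\K$ alone yields the orthogonal system $p_1=p,\,p_2,p_3,\dots$ with $p_n\sim p$, so fullness of $p$ enters only in the isomorphism clause; the paper invokes Brown from the outset, so your route is more elementary (and marginally more general) for that half. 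Second, the step you single out as the main obstacle --- that the first tile of a stable isomorphism $p(B\otimes\K)p\otimes\K\cong B\otimes\K$ is a priori only $K_0$-equal to $p$ --- is precisely where the paper spends its effort: it composes Brown's partial-isometry-implemented isomorphism $\gamma$ into $B\otimes\K\otimes\K$ with the inverse of an isomorphism $\mu\colon B\otimes\K\to B\otimes\K\otimes\K$ chosen \emph{homotopic} to $b\otimes x\mapsto b\otimes x\otimes e$, and that homotopy is what guarantees $\mu(p)\sim p\otimes e\sim\gamma(p\otimes e)$, i.e.\ that the first tile is honestly equivalent to $p$. Your ``inclusion-compatible form'' of Brown is the same fact, and it is justifiable (the isomorphism coming from an isomorphism of Hilbert modules $pD\otimes\ell^2\cong D$ sends $p\otimes e_{11}=\theta_{\xi,\xi}$ to $\eta\eta^*$ with $\eta^*\eta=\langle\xi,\xi\rangle=p$), but as written it glosses over how $B\otimes\K\otimes\K$ is identified with $B\otimes\K$: an arbitrary identification can move $[p\otimes e]$ by an automorphism of $K_0$, so this choice must be pinned down, after which your final conjugation by $u\in U(M(B\otimes\K))$ closes the argument exactly as the paper's does.
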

\begin{proof}
We denote by $\sim$ Murray-von Neumann equivalence of projections. Let us recall that if
$q,r\in B\otimes \K$ are projections, then $q\sim r$ if and only if there is $u\in U(M(B\otimes \K))$ such that
$uqu^*=r$, \cite[Lemma 1.10]{paper:Mingo}.
Since $p$ is a full projection in $B\otimes \K$,
by \cite{paperL.G.Brown.Stable.Isom}, there is $v\in M(B\otimes\K\otimes \K)$ such
that $v^*v=p\otimes I$ and $vv^*=1\otimes I\otimes I$. Then $\gamma: p(B\otimes \K)p\otimes \K \to B\otimes \K\otimes \K$, $\gamma(a)=vav^*$, is an isomorphism with the property that $\gamma(p\otimes e)=v(p\otimes e)v^*\sim (p\otimes e)(v^*v)(p\otimes e)=p\otimes e$.
The map $\K\to \K\otimes \K$, $x\mapsto x\otimes e$ is homotopic to a $*$-isomorphism
as observed in the proof of Theorem~\ref{thm:stabContractible}. It follows that the map
$B\otimes \K\to B\otimes\K\otimes\K $, $b\otimes x\mapsto b\otimes x\otimes e$ is also homotopic
to a $*$-isomorphism $\mu$. Note that $\mu(p)\sim p\otimes e\sim \gamma(p\otimes e)$.
Thus, after conjugating $\mu$
by a unitary in $M(B\otimes \K)$ we may arrange that $\mu(p)= \gamma(p\otimes e)$.
It follows that $\varphi=\mu^{-1}\circ\gamma\circ (\theta\otimes \mathrm{id}_{\K})\in \mathrm{Hom}(A\otimes \K,B\otimes \K)$ has the property that $\varphi(1\otimes e)=p$.  Finally note that
if $\theta$ is an isomorphism then so is $\varphi$.
\end{proof}
\begin{corollary}[Kodaka, \cite{paper:Kodaka}]\label{Kodaka}
 Let $A$ be a separable unital C*-algebra
and let $p\in A\otimes \K$ be a full projection.
Then $p(A\otimes \K)p\cong A$
if and only if there is $\alpha\in \Aut{A\otimes \K}$ such that $\alpha(1\otimes e)=p$.
\end{corollary}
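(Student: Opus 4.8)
The plan is to obtain this as a direct specialization of Lemma~\ref{lem:homotopy-of-endos} to the case $B = A$, supplemented by an elementary observation about corners of $A \otimes \K$. All of the analytic content has already been packaged into that lemma, so the corollary amounts to checking that its hypotheses are met in each direction.

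First I would dispatch the reverse implication, which is elementary. Suppose $\alpha \in \Aut{A \otimes \K}$ satisfies $\alpha(1 \otimes e) = p$. Any $*$-automorphism carries the hereditary subalgebra $q(A\otimes\K)q$ cut out by a projection $q$ onto $\alpha(q)(A\otimes\K)\alpha(q)$, so $\alpha$ restricts to a $*$-isomorphism of $(1 \otimes e)(A \otimes \K)(1 \otimes e)$ onto $p(A \otimes \K)p$. It then suffices to identify the source: a direct computation gives $(1 \otimes e)(A \otimes \K)(1 \otimes e) = A \otimes (e\K e)$, and since $e$ is a rank-one projection $e\K e \cong \C$, whence the corner is isomorphic to $A \otimes \C \cong A$. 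Composing yields $p(A \otimes \K)p \cong A$.

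For the forward implication I would invoke Lemma~\ref{lem:homotopy-of-endos} with $B = A$. Assuming $p(A \otimes \K)p \cong A$, let $\theta \colon A \to p(A \otimes \K)p$ be an isomorphism, which exists by hypothesis. As an isomorphism of unital C*-algebras $\theta$ is automatically unital, i.e.\ $\theta(1_A) = p$, and $p$ is precisely the unit of the corner $p(A \otimes \K)p$; this is exactly the unital $*$-homomorphism required by the lemma. Since $p$ is full by assumption, the lemma applies, and using specifically its final clause — that $\varphi$ may be taken to be an isomorphism whenever $\theta$ is — we obtain a $*$-isomorphism $\varphi \colon A \otimes \K \to A \otimes \K$ with $\varphi(1 \otimes e) = p$. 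Setting $\alpha = \varphi$ produces the desired automorphism.

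I expect essentially no obstacle at the level of the corollary itself: the only points needing care are that the fullness of $p$ and the unitality of $\theta$ are genuinely available, both of which are immediate from the setup. Any real difficulty lives upstream, inside Lemma~\ref{lem:homotopy-of-endos}, where fullness of $p$ is what permits the stable isomorphism $v$ with $v^*v = p\otimes I$ and the subsequent transport back to $A \otimes \K$ via the homotopy of $x \mapsto x \otimes e$ noted in the proof of Theorem~\ref{thm:stabContractible}; here we are simply harvesting that work.
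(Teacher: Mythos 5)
Your proposal is correct and matches the paper's intent exactly: the paper states this as an immediate corollary of Lemma~\ref{lem:homotopy-of-endos} (applied with $B=A$ and $\theta$ an isomorphism, using the final clause that $\varphi$ may then be chosen to be an isomorphism), with the reverse direction being the elementary observation about corners that you supply. No gaps.
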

\begin{proposition}\label{prop:homotopy-of-endoss}
 Let $A$ be a  strongly self-absorbing C*-algebra and let $B$ be a separable unital
C*-algebra such that $B\cong B\otimes A$. Let $\varphi,\psi:A\otimes \K \to B\otimes \K$ be two full $*$-homomorphisms. Suppose that
$ [\varphi(1_A \otimes e)]= [\psi(1_A \otimes e)]
\in K_0(B)$. Then (i) $\varphi$ is homotopic to $\psi$ and
(ii) $\varphi$ is approximately unitarily equivalent to
 $\psi$, written $\varphi \approx_{u} \psi$.
\end{proposition}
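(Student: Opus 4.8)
The plan is to compare $\varphi$ and $\psi$ by descending to their restrictions to the corner $(1_A\otimes e)(A\otimes\K)(1_A\otimes e)\cong A$, where the strong uniqueness statement Theorem~\ref{thm:cancellation}(b) is available, and then to transport the resulting equivalence back up to $A\otimes\K$ along the matrix-unit systems $\{1_A\otimes e_{i1}\}$. First I would normalise the projections. For $A\neq\C$ the algebra $B\cong B\otimes A$ is $\mathcal Z$-stable (the case $A=\C$ being the classical one), so by cancellation of full projections, Theorem~\ref{thm:cancellation}(c), the equality $[\varphi(1_A\otimes e)]=[\psi(1_A\otimes e)]$ of classes of full projections yields a unitary $w\in U(M(B\otimes\K))$ with $w\,\varphi(1_A\otimes e)\,w^*=\psi(1_A\otimes e)=:p$. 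Since $U(M(B\otimes\K))$ is contractible \cite{paper:CuntzHigson}, $w$ lies in the identity component, so replacing $\varphi$ by $\mathrm{Ad}_w\circ\varphi$ affects neither its homotopy class nor its approximate unitary equivalence class. Hence I may assume $\varphi(1_A\otimes e)=\psi(1_A\otimes e)=p$.

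Restriction to the corner then gives unital $*$-homomorphisms $\varphi_0,\psi_0\colon A\to B':=p(B\otimes\K)p$, with $\varphi_0(a)=\varphi(a\otimes e)$ and similarly for $\psi_0$. As $p$ is full, $B'$ is separable, unital and Morita equivalent to $B\otimes\K$; thus $B'\otimes\K\cong B\otimes\K$ is $A$-absorbing and therefore $B'\cong B'\otimes A$. Theorem~\ref{thm:cancellation}(b) now supplies a continuous path of unitaries $(u_t)_{t\in[0,1)}$ in $B'$ with $u_0=p$ and $\lim_{t\to1}\|u_t\varphi_0(a)u_t^*-\psi_0(a)\|=0$; in particular $t\mapsto\mathrm{Ad}_{u_t}\circ\varphi_0$, extended by $\psi_0$ at $t=1$, is a homotopy between $\varphi_0$ and $\psi_0$.

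The heart of the matter is the transfer back to $A\otimes\K$. Setting $s_i=\varphi(1_A\otimes e_{i1})$ and $t_i=\psi(1_A\otimes e_{i1})$ one has $s_i^*s_j=\delta_{ij}p=t_i^*t_j$ and $\varphi(a\otimes e_{jk})=s_j\varphi_0(a)s_k^*$, $\psi(a\otimes e_{jk})=t_j\psi_0(a)t_k^*$; the natural conjugator $\sum_i t_i\,u_t\,s_i^*$ asymptotically intertwines $\varphi$ with $\psi$ on the dense subalgebra spanned by the matrix units. The one place where the generality of \emph{full} rather than nondegenerate homomorphisms intervenes is that this element is only a partial isometry between the support projections $P_\varphi=\sum_i s_is_i^*$ and $P_\psi=\sum_i t_it_i^*$, which need not equal $1$. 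I would circumvent this using the compactness of the $\K$-direction: given $\varepsilon>0$ and a finite $F\subset A\otimes\K$, each element lies within $\varepsilon$ of $A\otimes M_N$ for some $N$, so only the first $N$ blocks matter. The partial isometry $\sum_{i\le N}t_is_i^*$ matches $q_N^\varphi=\sum_{i\le N}s_is_i^*$ with $q_N^\psi=\sum_{i\le N}t_it_i^*$, and since the infinitely many copies of $p$ making up $P_\varphi$ and $P_\psi$ can be shifted one sees $1-q_N^\varphi\sim1\sim1-q_N^\psi$ in the properly infinite algebra $M(B\otimes\K)$; hence it completes to a genuine unitary $U\in U(M(B\otimes\K))$. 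Taking $t$ close to $1$ gives $\|U\varphi(x)U^*-\psi(x)\|<3\varepsilon$ for $x\in F$, which proves (ii).

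For (i) the same blockwise matching, performed successively on $\{1,\dots,n\}$ and spliced together by a telescoping of paths exactly as in the proof of Lemma~\ref{lem:AF-paths}, promotes these approximating unitaries to a single continuous path $(U_t)_{t\in[0,1)}$ in $U(M(B\otimes\K))$ with $U_0=1$ and $\mathrm{Ad}_{U_t}\circ\varphi\to\psi$; then $t\mapsto\mathrm{Ad}_{U_t}\circ\varphi$, extended by $\psi$, is the required homotopy. I expect the sole genuinely delicate point to be this continuous, blockwise transfer in the possibly degenerate case, the remaining ingredients — cancellation, Morita invariance of $A$-absorption, Theorem~\ref{thm:cancellation}(b) and contractibility of $U(M(B\otimes\K))$ — being routine.
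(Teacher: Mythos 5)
Your part (ii) is essentially the paper's argument: normalise the projections using cancellation (Theorem~\ref{thm:cancellation}(c)) and the contractibility of $U(M(B\otimes \K))$, match the matrix units $\varphi(1\otimes e_{ij})$ and $\psi(1\otimes e_{ij})$ over a finite block by a partial isometry, complete it to a unitary of $M(B\otimes\K)$ using the equivalence of the complements, and then invoke Theorem~\ref{thm:cancellation}(b) in the $A$-absorbing corner $p(B\otimes\K)p$. Up to presentation ($\varepsilon$--$F$ truncation versus a fixed $n$) this is what the paper does. One small repair: the clean justification for completing the partial isometry is that $q_N^\varphi\sim q_N^\psi$ in $B\otimes\K$ forces $1-q_N^\varphi\sim 1-q_N^\psi$ in $M(B\otimes\K)$ by Mingo's lemma (unitary implementation of Murray--von Neumann equivalence); your claim that each complement is equivalent to $1$ is not what you need and is justified only heuristically.

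Part (i) is where there is a genuine gap. The paper does not build the homotopy by hand: it quotes Thomsen's theorem that the restriction map $[A\otimes\K,B\otimes\K]_{\sharp}\to[A,B\otimes\K]_{\sharp}$ is a bijection, which reduces (i) in one stroke to the unital corner, where Theorem~\ref{thm:cancellation}(b) applies. Your substitute --- telescoping the blockwise unitaries ``exactly as in Lemma~\ref{lem:AF-paths}'' --- does not go through as stated. In Lemma~\ref{lem:AF-paths} the intertwining at each stage is \emph{exact}, so the correction unitaries $u_{n+1}^*u_n$ lie exactly in the relative commutant of $\psi(A_n)$, and that commutant is an AF-algebra, hence has connected unitary group; both facts are needed to contract the correction to $1$ without disturbing what has already been matched. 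In your situation the intertwining on each block is only approximate (it comes from the asymptotic path of Theorem~\ref{thm:cancellation}(b)), so the correction unitary between stages $N$ and $N+1$ only approximately commutes with $\varphi(A\otimes M_N(\C))$, and you would need to join it to $1$ through unitaries that continue to approximately commute with this set. That is a Basic-Homotopy-Lemma-type statement, not an automatic one, and such statements can fail for $K$-theoretic reasons. Even in the exactly commuting case the relevant relative commutant here is not AF, so the connectedness of its unitary group would require a separate argument. As written, your path $(U_t)$ is not shown to exist as a continuous path with $\mathrm{Ad}_{U_t}\circ\varphi\to\psi$ pointwise; you must either supply the missing homotopy lemma or, as the paper does, invoke Thomsen's result.
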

\begin{proof}
 (i) For C*-algebras $A$, $B$ we denote by $[A,B]_{\sharp}$ the homotopy classes of full $*$-homomorphisms
 $\varphi:A\to B$.
 The inclusion $A \cong A\otimes e \hookrightarrow A\otimes \K$ induces a restriction map
$\rho: [A\otimes \K,B\otimes \K]_{\sharp}\to [A,B\otimes \K]_{\sharp}$.
Thomsen showed that $\rho$ is bijective,
see \cite[Lemma 1.4]{Thomsen:homotopy-duke}.
Since the map $[\varphi]\mapsto [\varphi(1\otimes e)]$ factors through $\rho$, it suffices to show that the map
$ [A,B\otimes \K]_{\sharp}\to K_0(B)$, $\varphi\mapsto [\varphi(1)]$ is injective.
Let $\varphi,\psi: A\to B\otimes \K$ be two full $*$-homomorphisms.
Suppose that $[\varphi(1)]=[\psi(1)]$.  Since $B$   has cancellation of full projections by Theorem~\ref{thm:cancellation}(c), after conjugation by a unitary in the contractible group $U(M(B\otimes \K))$, we may assume that
$\varphi(1)=\psi(1)=p\in\Proj{B\otimes \K}$. The C*-algebra $p(B\otimes \K)p$ is $A$-absorbing by \cite[Cor.3.1]{paper:TomsWinter}. It follows that the $*$-homomorphisms $\varphi,\psi:A \to p(B\otimes \K)p$ are homotopic by Theorem~\ref{thm:cancellation}(b).

(ii) It suffices to prove approximate unitary equivalence for the restrictions of $\varphi$ and $\psi$
to $A\otimes M_n(\C)$ for any $n\geq 1$.  Let $(e_{ij})$ denote the canonical matrix unit
of $M_n(\C)$, $p_{ij}=\varphi(1\otimes e_{ij})$, $q_{ij}=\psi(1\otimes e_{ij})$, $p_n=\varphi(1\otimes 1_n)$
and $q_n=\psi(1\otimes 1_n)$.
By reasoning as in part (a), we find a partial isometry $v\in B\otimes \K$ such that $v^*v=p_{11}$
and $vv^*=q_{11}$. By \cite[Lemma 1.10]{paper:Mingo} there is a partial isometry $w\in M(B\otimes \K)$
such that $w^*w=1-p_n$ and $ww^*=1-q_n$. It follows that $V=w+\sum_{k=1}^n q_{k1}vp_{1k}$ is a unitary
in $M(B\otimes \K)$ such that $V\varphi(1\otimes x)V^*=\psi(1\otimes x)$ for all $x\in M_n(\C)$.
Thus
after conjugating $\varphi$ by a unitary  we may assume that
$\varphi(1\otimes x)=\psi(1\otimes x)$ for all $x\in M_n(\C)$.
Let us observe that if $a\in A$ and $u\in U(p_{11}(B\otimes \K)p_{11})$, then
$U=(1-p_n)+\sum_{k=1}^n p_{k1}up_{1k}\in U(M(B\otimes \K))$ satisfies
$U\varphi(a\otimes e_{ij})U^*-\psi(a\otimes e_{ij})=p_{i1}\big(u\varphi(a\otimes e_{11})u^*-\psi(a\otimes e_{11})\big)p_{1j}$.
This reduces our task
to proving approximate unitary equivalence for the unital maps $A\otimes e \to  p(B\otimes \K)p$
induced by  $\varphi$ and $\psi$, where $p=\varphi(1\otimes e)$.
Since $p(B\otimes \K)p$ is $A$-absorbing, this follows from Theorem~\ref{thm:cancellation}(b).
\end{proof}

Next we consider the case  when $B=C(X)\otimes A$ in Lemma~\ref{lem:homotopy-of-endos}.
We compare  two natural multiplicative $H$-space structures on $\Endo{A \otimes \K}$.

\begin{lemma}\label{lem:EckmannHilton1}
Let $X$ be a topological space, let $A$ be a  strongly self-absorbing $C^*$-algebra and let $\psi \colon A \otimes \K \to (A \otimes \K) \otimes (A \otimes \K)$ be a $*$-isomorphism. The two operations $\ast$ and $\circ$ on $G = [X,\Endo{A \otimes \K}]$  defined by
\begin{align*}
	[\alpha] \ast [\beta] = [\psi^{-1} \circ (\alpha \otimes \beta) \circ \psi] \quad \text{and} \quad [\alpha] \circ [\beta] = [\alpha \circ \beta]\ ,
\end{align*}
where $\alpha \otimes \beta \colon X \to \Endo{(A \otimes \K)^{\otimes 2}}$ denotes the pointwise tensor product, agree and are both associative and commutative. Moreover, $\ast$ does not depend on the choice of $\psi$ and is a group operation when restricted to $\Aut{A \otimes \K}$.
\end{lemma}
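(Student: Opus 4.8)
The name of the game here is the Eckmann--Hilton argument: the plan is to verify that $(G,\circ)$ and $(G,\ast)$ are both unital magmas satisfying the middle--four interchange law, and then let Eckmann--Hilton force the two operations to coincide and to be commutative and associative. Throughout write $D = A \otimes \K$. Since pointwise composition and pointwise tensor product of continuous maps $X \to \Endo{D}$ are again continuous and respect homotopies, both $\circ$ and $\ast$ descend to well-defined operations on $G = [X,\Endo{D}]$; note also that $D$ is simple (Theorem~\ref{thm:cancellation}(a)), so every element of $\Endo{D}$ is injective and the two constructions stay inside $\Endo{D}$. It is convenient to prove all the assertions first for one specific isomorphism $\psi_0$ and to deduce the general case afterwards, which will simultaneously yield the independence of $\ast$ on the choice of $\psi$.

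The interchange law is an identity, not merely a homotopy. For representatives $\alpha,\beta,\gamma,\delta \colon X \to \Endo{D}$, bifunctoriality of the tensor product gives the pointwise equality $(\alpha \otimes \beta) \circ (\gamma \otimes \delta) = (\alpha \circ \gamma) \otimes (\beta \circ \delta)$ of maps $X \to \Endo{D \otimes D}$. Inserting $\psi_0 \psi_0^{-1} = \id{D \otimes D}$ in the middle and cancelling therefore yields
\[
	([\alpha] \ast [\beta]) \circ ([\gamma] \ast [\delta])
	= \bigl[\psi_0^{-1}\bigl((\alpha \circ \gamma) \otimes (\beta \circ \delta)\bigr)\psi_0\bigr]
	= ([\alpha] \circ [\gamma]) \ast ([\beta] \circ [\delta]).
\]
The operation $\circ$ obviously has the two-sided unit $[\id{D}]$ (the constant map at the identity), so it remains only to produce a two-sided unit for $\ast_{\psi_0}$.

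The crux is to show that $[\id{D}]$ is also a two-sided unit for $\ast_{\psi_0}$, i.e.\ that $\psi_0^{-1}(\alpha \otimes \id{D})\psi_0 \simeq \alpha$ and $\psi_0^{-1}(\id{D} \otimes \alpha)\psi_0 \simeq \alpha$ for every $\alpha \colon X \to \Endo{D}$. Here I would take $\psi_0 = \hat{\psi}(1/2)$, where $\hat{\psi}\colon [0,1] \to \mathrm{Hom}(D, D \otimes D)$ is the path from the proof of Theorem~\ref{thm:stabContractible} with $\hat{\psi}(0) = \hat{l}$, $\hat{\psi}(1) = \hat{r}$, $\hat\psi(t)(1\otimes e) = (1 \otimes e) \otimes (1 \otimes e)$, and $\hat{\psi}(t)$ an isomorphism for $t \in (0,1)$, where $\hat{l}(d) = d \otimes (1 \otimes e)$ and $\hat{r}(d) = (1 \otimes e) \otimes d$. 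Conjugation gives a homotopy $G(s,\alpha) = \hat{\psi}(s)^{-1}(\alpha \otimes \id{D})\hat{\psi}(s)$ of maps $X \to \Endo{D}$ for $s \in (0,1)$, with $G(1/2,\alpha) = \psi_0^{-1}(\alpha \otimes \id{D})\psi_0$. Because $(\alpha \otimes \id{D}) \circ \hat{l} = \hat{l} \circ \alpha$ for \emph{every} $\alpha$ --- this is the key point, and unlike in Lemma~\ref{lemma:Aut(B,e)contractible} it requires no condition on $\alpha(1 \otimes e)$ --- the same three--term estimate as in the proof of Lemma~\ref{lemma:Aut(B,e)contractible} shows that $G$ extends continuously to $s = 0$ with $G(0,\alpha) = \alpha$. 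The restriction of $G$ to $[0,1/2]$ is then the desired homotopy $\alpha \simeq \psi_0^{-1}(\alpha \otimes \id{D})\psi_0$; the left unit is handled symmetrically, using $(\id{D} \otimes \alpha) \circ \hat{r} = \hat{r} \circ \alpha$ and continuity of the conjugation homotopy at $s = 1$. This endpoint-continuity is the only delicate part of the argument. (For compact metrizable $X$ one can instead read the unit property off Proposition~\ref{prop:homotopy-of-endoss} applied with $B = C(X)\otimes A$: since $\psi_0(1 \otimes e) = (1 \otimes e) \otimes (1 \otimes e)$, the class of $\psi_0^{-1}(\alpha \otimes \id{D})\psi_0(1 \otimes e)$ equals $[\alpha(1 \otimes e)] \cdot [1_A] = [\alpha(1 \otimes e)]$ in $K_0(C(X) \otimes A)$, whence the two full $*$-homomorphisms are homotopic.)

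With both units in hand the Eckmann--Hilton argument applies verbatim: the two units coincide, $\ast_{\psi_0} = \circ$, and the common operation is commutative and associative --- associativity of $\circ$ is automatic, while the commutativity is the nontrivial output. Finally I would remove the restriction to $\psi_0$. Given an arbitrary isomorphism $\psi$, set $\theta = \psi_0^{-1} \circ \psi \in \Aut{D}$, so that $\psi = \psi_0 \circ \theta$ and hence $\psi^{-1}(\alpha \otimes \beta)\psi = \theta^{-1} \circ [\psi_0^{-1}(\alpha \otimes \beta)\psi_0] \circ \theta$. Passing to homotopy classes and using $\ast_{\psi_0} = \circ$ together with the commutativity of $\circ$ just established, the factor $[\theta]$ can be moved past $[\alpha] \circ [\beta]$ and cancelled against $[\theta^{-1}] = [\theta]^{-1}$, giving $[\alpha] \ast_{\psi} [\beta] = [\alpha] \circ [\beta] = [\alpha] \ast_{\psi_0} [\beta]$. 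Thus $\ast$ agrees with $\circ$ for every $\psi$, which proves at once the agreement of the operations and the independence of $\ast$ on $\psi$. Restricted to $[X,\Aut{D}]$ the operation $\ast$ equals composition in the topological group $\Aut{D}$, which is a group operation with unit $[\id{D}]$ and inverses $[\alpha^{-1}]$, abelian by the commutativity already shown.
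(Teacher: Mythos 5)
Your proposal is correct and follows essentially the same route as the paper: the exact middle-four interchange identity, the choice $\psi_0=\hat{\psi}(1/2)$ with the conjugation homotopy $\hat{\psi}(s)^{-1}\circ(\alpha\otimes\id{A\otimes\K})\circ\hat{\psi}(s)$ (continuous at the endpoints by the three-term estimate from Lemma~\ref{lemma:Aut(B,e)contractible}) to make $[\id{A\otimes\K}]$ a two-sided $\ast$-unit, Eckmann--Hilton, and then writing an arbitrary $\psi$ as $\psi_0\circ\theta$ and cancelling $[\theta]$ by homotopy commutativity of $\circ$. Your extra care in isolating the identity $(\alpha\otimes\id{A\otimes\K})\circ\hat{l}=\hat{l}\circ\alpha$ as the reason the endpoint continuity needs no hypothesis on $\alpha(1\otimes e)$ is a welcome clarification of a step the paper only gestures at.
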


\begin{proof}\label{pf:EckmannHilton1}
First, let $\hat{\psi}$, $\hat{l}$ and $\hat{r}$ be as in the proof of Theorem \ref{thm:stabContractible} and use $\psi =\hat{\psi}(\frac{1}{2})$ in the definition of the operation $\ast$. Given $\alpha, \beta, \delta$ and $\gamma \in C(X,\Endo{A \otimes \K})$ we have
\begin{align*}
	([\alpha] \ast [\beta]) \circ ([\gamma] \ast [\delta]) & = [\psi^{-1} \circ (\alpha \otimes \beta) \circ \psi \circ \psi^{-1} \circ (\gamma \otimes \delta) \circ \psi] \\
	&= [\psi^{-1} \circ ((\alpha \circ \gamma) \otimes (\beta \circ \delta)) \circ \psi] = ([\alpha] \circ [\gamma]) \ast ([\beta] \circ [\delta])\ .
\end{align*}
Thus, the Eckmann-Hilton \cite{paper:Eckmann-Hilton} argument will imply that $\ast$ and $\circ$ agree and are both associative and commutative for this particular choice of $\psi$ if we can show that $\id{A \otimes \K}$ is a unit for the operation $\ast$. Just as in the proof of Theorem \ref{thm:stabContractible} we can see that $\hat{\psi}(t/2)^{-1} \circ (\alpha \otimes \id{A \otimes \K}) \circ \hat{\psi}(t/2)$, $t\in[0,1]$, is a homotopy from $\alpha$ to $\psi^{-1} \circ (\alpha \otimes \id{A \otimes \K}) \circ \psi$ with respect to the point-norm topology on $\Endo{A \otimes \K}$ proving that $\id{A \otimes \K}$ is a right unit. The analogous argument for $\hat{\psi}((t+1)/2)^{-1} \circ (\id{A \otimes \K} \otimes \alpha) \circ \hat{\psi}((t+1)/2)$ shows that $\id{A \otimes \K}$ is also a left unit.

If $\psi$ is chosen arbitrarily, we have $\psi = \hat{\psi}(\frac{1}{2}) \circ \kappa$ for some $\kappa \in \Aut{A \otimes \K}$. We denote the corresponding operations by $\ast_{\psi}$ and $\ast_{\hat{\psi}}$ and have
\[
	[\alpha] \ast_{\psi} [\beta] = [\kappa^{-1} \circ \hat{\psi}^{-1}(\tfrac{1}{2}) \circ (\alpha \otimes \beta) \circ \hat{\psi}(\tfrac{1}{2}) \circ \kappa] = [\kappa^{-1}] \circ ([\alpha] \ast_{\hat{\psi}} [\beta]) \circ [\kappa] = [\alpha] \ast_{\hat{\psi}} [\beta]
\]
by the homotopy commutativity of $\circ$. This proves the independence of $\ast$ from the choice of the isomorphism $\psi$.
\end{proof}
We denote by $\approx_{u}$ the relation of approximate unitary equivalence
for $*$-homomorphisms.
\begin{lemma}\label{lemma:p-tensor-q} Let $A$ be a strongly self-absorbing C*-algebra.
If $p\in A\otimes \K$ is a nonzero projection, the following conditions are equivalent:
\begin{itemize}
\item[(i)] $p(A\otimes \K)p\cong A$
\item[(ii)] There is $\alpha\in \Aut{A\otimes \K}$ such that $\alpha(1\otimes e)=p$.
\item[(iii)] $[p]\in K_0(A)^\times_{+}$
\end{itemize}
We denote by $\Proj{A \otimes \K}^\times$  the set of all projections  satisfying
these equivalent conditions.
\end{lemma}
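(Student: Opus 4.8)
The plan is to establish the equivalence of the three conditions by proving the cycle $(i) \Rightarrow (ii) \Rightarrow (iii) \Rightarrow (i)$, drawing on results already assembled in the excerpt. The implications $(i) \Leftrightarrow (ii)$ are essentially Corollary~\ref{Kodaka} (Kodaka's theorem), so the real content lies in relating these to the $K$-theoretic condition $(iii)$, namely that $[p]$ is a positive invertible element of the ring $K_0(A)$.

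\emph{From $(ii)$ to $(iii)$.} Suppose $\alpha \in \Aut{A \otimes \K}$ satisfies $\alpha(1 \otimes e) = p$. First I would recall that the ring structure on $K_0(A) \cong K_0(A \otimes \K)$ has unit $[1_A \otimes e]$, coming from the multiplicative isomorphism $\psi \colon A \otimes \K \to (A \otimes \K) \otimes (A \otimes \K)$ sending $1_A \otimes e$ to $(1_A \otimes e) \otimes (1_A \otimes e)$, as set up before Lemma~\ref{lem:EckmannHilton1}. The key observation is that an automorphism $\alpha$ induces a ring automorphism $\alpha_*$ of $K_0(A)$: it is automatically additive, and compatibility with the product follows because $\alpha$ commutes up to homotopy with $\psi$ (using the homotopy-commutativity established in Lemma~\ref{lem:EckmannHilton1}, or directly from Proposition~\ref{prop:homotopy-of-endoss}). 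Since $\alpha_*$ is a ring automorphism, it sends the unit $[1_A \otimes e]$ to a unit, so $[p] = \alpha_*[1_A \otimes e] \in K_0(A)^\times$. Positivity is immediate since $p$ is a genuine projection, giving $[p] \in K_0(A)^\times_+$.

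\emph{From $(iii)$ to $(i)$.} This is where I expect the main obstacle to lie. Assume $[p] \in K_0(A)^\times_+$, so there is a class $y \in K_0(A)$ with $[p] \cdot y = [1_A \otimes e]$. I would exhibit a unital $*$-homomorphism $\theta \colon A \to p(A \otimes \K)p$ and then invoke Lemma~\ref{lem:homotopy-of-endos} to build an isomorphism, but to close the loop I must verify that $p(A \otimes \K)p \cong A$. The strategy is to represent the inverse $y$ by a projection $q$ (a full projection in $A \otimes \K$ exists since $A$ is simple) and use that $[p]\cdot[q] = [1_A \otimes e]$ means $p(A\otimes\K)p \otimes q(A\otimes\K)q$ is stably the unit. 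The cleanest route exploits that $p(A \otimes \K)p$ is again strongly self-absorbing or at least $A$-absorbing: by \cite[Cor.3.1]{paper:TomsWinter} the corner $p(A\otimes \K)p$ is $A$-absorbing whenever $p$ is full, and for $A$ simple every nonzero projection is full. One then shows that an $A$-absorbing unital C*-algebra $C$ with $[1_C] \in K_0(A)^\times_+$ under the identification $K_0(C) \cong K_0(A)$ must satisfy $C \cong A$; this uses the classification-type uniqueness afforded by Theorem~\ref{thm:cancellation}(b) together with Proposition~\ref{prop:homotopy-of-endoss} to produce mutually inverse $*$-homomorphisms at the level of $A \otimes \K$ whose restrictions give the isomorphism $C \cong A$.

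The delicate point is the multiplicativity of $\alpha_*$ and, in the reverse direction, translating the algebraic invertibility of $[p]$ into an actual isomorphism of corners rather than merely a stable or $K$-theoretic statement; cancellation of full projections (Theorem~\ref{thm:cancellation}(c)) is what upgrades stable isomorphism to honest isomorphism. I would handle the degenerate case $A = \C$ separately, where $K_0(\C) = \Z$, $K_0(\C)^\times_+ = \{1\}$, and the only qualifying projections are the rank-one ones, so all three conditions reduce to $p$ having rank $1$, consistent with the classical Dixmier--Douady picture.
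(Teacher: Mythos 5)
Your overall architecture matches the paper's: (i)$\Leftrightarrow$(ii) via Corollary~\ref{Kodaka}, (ii)$\Rightarrow$(iii) via multiplicativity on $K_0$, and (iii)$\Rightarrow$(i) by representing the inverse class by a full projection and upgrading $K$-theoretic invertibility to an isomorphism of corners. Your sketch of (iii)$\Rightarrow$(i) is essentially the paper's proof: take $\varphi,\psi\in\Endo{A\otimes\K}$ with $\varphi(1\otimes e)=p$, $\psi(1\otimes e)=q$ from Lemma~\ref{lem:homotopy-of-endos}, deduce $\varphi\circ\psi\approx_u\id{A\otimes\K}\approx_u\psi\circ\varphi$ from Proposition~\ref{prop:homotopy-of-endoss}, apply the intertwining result \cite[Cor.2.3.4]{Ror:encyclopedia} to replace $\varphi$ by an automorphism $\varphi_0$, and use cancellation of full projections to get $p\sim\varphi_0(1\otimes e)$, whence $p(A\otimes\K)p\cong A$. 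You correctly identify cancellation as the step that converts stable data into an honest isomorphism.

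There is, however, one genuinely false step in your (ii)$\Rightarrow$(iii): an automorphism $\alpha$ of $A\otimes\K$ does \emph{not} induce a ring automorphism of $K_0(A)$. A ring automorphism fixes the unit, so your claim would force $[p]=\alpha_*[1\otimes e]=[1\otimes e]$ for every automorphism $\alpha$ -- contradicting the fact that $\pi_0(\Aut{A\otimes\K})\cong K_0(A)^\times_+$ (Theorem~\ref{thm:homotopy-groups}) is detected precisely by $\alpha\mapsto[\alpha(1\otimes e)]$ and is nontrivial, e.g.\ for $A=M_{\mathbb{Q}}$ where it is all of $\mathbb{Q}^\times_+$. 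The Eckmann--Hilton argument of Lemma~\ref{lem:EckmannHilton1} gives that $\psi\circ\alpha$ is homotopic to $(\alpha\otimes\id{A\otimes\K})\circ\psi$, \emph{not} to $(\alpha\otimes\alpha)\circ\psi$; consequently $\alpha_*(x\cdot y)=\alpha_*(x)\cdot y$, i.e.\ $\alpha_*$ acts on $K_0(A)$ as multiplication by the element $\alpha_*[1\otimes e]$, which is a module map rather than a ring map. The correct and sufficient statement -- the one the paper uses -- is that $\Theta[\alpha]=[\alpha(1\otimes e)]$ satisfies $\Theta[\alpha\circ\beta]=\Theta[\alpha]\cdot\Theta[\beta]$, so that $[p]\cdot[\alpha^{-1}(1\otimes e)]=\Theta[\id{A\otimes\K}]=[1\otimes e]$ exhibits $[p]$ as a unit. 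Your conclusion is right, but it must be derived this way. (A minor point you share with the paper: in (iii)$\Rightarrow$(i) one should justify that the inverse of $[p]$ is again represented by a full projection; in the stably finite case this follows from the multiplicativity of the trace on $K_0(A)$ together with strict positivity, and in the purely infinite case it is automatic.)
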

\begin{proof}  (i) $\Leftrightarrow$ (ii) This follows from Corollary~\ref{Kodaka}.

(ii) $\Rightarrow$ (iii)
 As an immediate consequence of Lemma~\ref{lem:EckmannHilton1} one verifies
that the map  $\Theta:\pi_0(\mathrm{End}(A\otimes \K))\to K_0(A)$,
$\Theta[\alpha]=[\alpha(1\otimes e)]$ is multiplicative, i.e. $\Theta [\alpha\circ \beta]=\Theta [\alpha]\Theta[\beta]$.
Let $q:=\alpha^{-1}(1\otimes e)$. Then
$[p][q]=\Theta[\alpha\circ \alpha^{-1}]=\Theta[\mathrm{id}]=[1]$.

(iii) $\Rightarrow$ (i) By assumption there is a full projection $q\in A\otimes K$ such that
$[p][q]=[1]$ in $K_0(A)$. By Lemma~\ref{lem:homotopy-of-endos} there are
$\varphi,\psi\in \mathrm{End}(A\otimes \K)$ such that $\varphi(1\otimes e)=p$
and
$\psi(1\otimes e)=q$. Since $[p][q]=[1]$ in $K_0(A)$, it follows that $[\varphi\circ \psi]=[\psi\circ \varphi]=[\mathrm{id_{A\otimes \K}}]\in [A\otimes \K,A\otimes \K]$. Therefore $\varphi\circ \psi \approx_{u} \mathrm{id}_{A\otimes \K} \approx_{u} \psi\circ \varphi$ by Proposition~\ref{prop:homotopy-of-endoss}.
By \cite[Cor.2.3.4]{Ror:encyclopedia} it follows that there is an automorphism $\varphi_0\in \Aut{A\otimes \K}$
such that $\varphi_0 \approx_{u} \varphi$. Set $p_0=\varphi_0(1_A\otimes e)$.
The map $\varphi_0$ induces a $*$-isomorphism
$A\cong (1_A\otimes e) (A\otimes \K)(1_A\otimes e)\to p_0(A\otimes \K)p_0$. We conclude
that $A\cong p(A\otimes \K)p$ since $p_0$ is unitarily equivalent to $p$.
\end{proof}
If $A$ is a separable unital C*-algebra,   Brown, Green and Rieffel \cite{paper:BGR} showed that
the Picard group   $\mathrm{Pic}(A)$ is isomorphic to the outer automorphism group of $A\otimes \K$, i.e.
 $\mathrm{Pic}(A)\cong \mathrm{Out}(A\otimes \K)=\Aut{A\otimes \K}/\mathrm{Inn}(A\otimes \K)$. One can view $\mathrm{Out}(A)$ as a subgroup of $\mathrm{Pic}(A)$. Kodaka \cite{paper:Kodaka} has shown that the coset space
 $\mathrm{Pic}(A)/\mathrm{Out}(A)$ is in bijection with the Murray-von Neumann equivalence classes of full projections $p\in A\otimes \K$ such that $p(A\otimes \K)p\cong A$.
 From Lemmas~\ref{lem:EckmannHilton1} and \ref{lemma:p-tensor-q} we see that if $A$ is strongly
 self-absorbing, then $\mathrm{Out}(A)$ is a normal subgroup of $\mathrm{Pic}(A)$ and we have:
\begin{corollary}\label{cor:Picard}
If $A$ is strongly self-absorbing, then there is an exact sequence of groups
 \[1\to \mathrm{Out}(A) \to \mathrm{Pic}(A)\to K_0(A)^{\times}_{+}\to 1.\]
\end{corollary}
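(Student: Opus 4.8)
The plan is to assemble the exact sequence from the three homomorphisms and identifications that are already available in the excerpt, so the work is primarily one of bookkeeping rather than hard analysis. Recall that Brown--Green--Rieffel \cite{paper:BGR} give an isomorphism $\Pic{A}\cong \mathrm{Out}(A\otimes \K)=\Aut{A\otimes \K}/\Inn{A\otimes \K}$, and that $\mathrm{Out}(A)$ sits inside $\Pic{A}$ as the image of $\Aut{A}\subset \Aut{A\otimes \K}$. First I would define the candidate surjection $\Pic{A}\to K_0(A)^{\times}_{+}$ as the map sending the class of $\alpha\in \Aut{A\otimes \K}$ to $[\alpha(1\otimes e)]\in K_0(A)$. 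By Lemma~\ref{lemma:p-tensor-q}, conditions (ii) and (iii) are equivalent, so this class always lies in $K_0(A)^{\times}_{+}$, and conversely every element of $K_0(A)^{\times}_{+}$ is realized by some $\alpha$; hence the map is well-defined on representatives and surjective.

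Next I would verify that this assignment descends to $\Pic{A}$ and is a group homomorphism. The map $\Theta:\pi_0(\Endo{A\otimes \K})\to K_0(A)$, $\Theta[\alpha]=[\alpha(1\otimes e)]$, was shown in the proof of Lemma~\ref{lemma:p-tensor-q} to be multiplicative, i.e.\ $\Theta[\alpha\circ\beta]=\Theta[\alpha]\,\Theta[\beta]$, using the Eckmann--Hilton identity of Lemma~\ref{lem:EckmannHilton1}. Restricting $\Theta$ to $\Aut{A\otimes \K}$ therefore gives a group homomorphism into $K_0(A)^{\times}_{+}$, and since inner automorphisms $\mathrm{Ad}_u$ send $1\otimes e$ to a unitarily equivalent --- hence $K_0$-equal --- projection, $\Theta$ factors through $\mathrm{Out}(A\otimes \K)\cong \Pic{A}$. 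Composing with the Brown--Green--Rieffel isomorphism yields the desired homomorphism $\Pic{A}\to K_0(A)^{\times}_{+}$, which we have already seen to be surjective.

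The remaining point is the identification of the kernel with $\mathrm{Out}(A)$, which I expect to be the main obstacle and the one place where strong self-absorption is used essentially. Suppose $[\alpha]\in\Pic{A}$ maps to $[1]\in K_0(A)^{\times}_{+}$, i.e.\ $[\alpha(1\otimes e)]=[1\otimes e]$. Then $\alpha$ and $\id{A\otimes \K}$ are two full $*$-homomorphisms $A\otimes \K\to A\otimes \K$ sending $1\otimes e$ to projections with equal $K_0$-class, so by Proposition~\ref{prop:homotopy-of-endoss}(ii) they are approximately unitarily equivalent; by \cite[Cor.2.3.4]{Ror:encyclopedia} one may choose the equivalence to be implemented by an actual automorphism, and after conjugating we may assume $\alpha(1\otimes e)=1\otimes e$, i.e.\ $\alpha\in \AutSt{A\otimes \K}{1\otimes e}$. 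The stabilizer restricts to an automorphism of the corner $(1\otimes e)(A\otimes \K)(1\otimes e)\cong A$, giving an element of $\mathrm{Out}(A)$, and conversely elements of $\mathrm{Out}(A)$ clearly lie in the kernel; so the kernel is exactly $\mathrm{Out}(A)$. The care needed here is to check that the passage through approximate unitary equivalence does not change the class in $\Pic{A}$ (unitary conjugation is inner, hence trivial in $\mathrm{Out}$) and that $\mathrm{Out}(A)$ is genuinely normal, which is precisely the content guaranteed by the remark preceding the corollary. Finally I would note injectivity on $\mathrm{Out}(A)$: an automorphism of $A$ that becomes inner in $\Aut{A\otimes\K}$ is already inner in $\Aut{A}$, so the inclusion $\mathrm{Out}(A)\hookrightarrow\Pic{A}$ is injective, completing the exact sequence.
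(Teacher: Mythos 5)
Your proposal is correct in substance and follows the same route the paper intends: identify $\Pic{A}$ with $\mathrm{Out}(A\otimes\K)$ via \cite{paper:BGR}, send $[\alpha]$ to $[\alpha(1\otimes e)]$, use Lemma~\ref{lem:EckmannHilton1} for multiplicativity and Lemma~\ref{lemma:p-tensor-q} for surjectivity onto $K_0(A)^{\times}_{+}$. Two remarks. First, your reduction of the kernel computation to the stabilizer is more complicated than necessary: once $[\alpha(1\otimes e)]=[1\otimes e]$, cancellation of full projections (Theorem~\ref{thm:cancellation}(c)) together with \cite[Lemma 1.10]{paper:Mingo} already gives a unitary $u\in U(M(A\otimes\K))$ with $u\alpha(1\otimes e)u^*=1\otimes e$, so the detour through Proposition~\ref{prop:homotopy-of-endoss}(ii) and \cite[Cor.2.3.4]{Ror:encyclopedia} is not needed. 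Second, and more importantly, the step ``the stabilizer restricts to an automorphism of the corner, giving an element of $\mathrm{Out}(A)$, so the kernel is exactly $\mathrm{Out}(A)$'' glosses over the real content: producing $\alpha_0\in\Aut{A}$ from $\alpha\in\AutSt{A\otimes\K}{1\otimes e}$ does not by itself show that $[\alpha]$ lies in the image of $\mathrm{Out}(A)\to\Pic{A}$; one must also check that $\alpha$ and $\alpha_0\otimes\id{\K}$ differ by a generalized inner automorphism (a standard matrix-unit argument: the difference fixes the corner $A\otimes e$ pointwise and is therefore of the form $\mathrm{Ad}_u$ with $u\in U(M(A\otimes\K))$). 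The paper sidesteps exactly this point by citing Kodaka's result (Corollary~\ref{Kodaka} and \cite{paper:Kodaka}) that $\Pic{A}/\mathrm{Out}(A)$ is in bijection with the Murray--von Neumann classes of full projections $p$ with $p(A\otimes\K)p\cong A$; combined with Lemma~\ref{lemma:p-tensor-q} this is the whole kernel identification. So your argument is a self-contained re-derivation of Kodaka's bijection with one step left implicit, rather than a genuinely different proof.
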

If moreover $A$ is stably finite, then its normalized trace induces a  homomorphism of multiplicative groups from $K_0(A)^{\times}_{+}$ onto
the fundamental group $\mathcal{F}(A)$ of $A$ defined in \cite{paper:Nawata-Watatani}.
 \begin{lemma}\label{lem:full-AutAKfibration} Let $A$ be a strongly self-absorbing C*-algebra.
The sequences
\(
	\AutSt{A \otimes \K}{1 \otimes e} \to \Aut{A \otimes \K} \to \Proj{A \otimes \K}^\times
\)
and
\(
	\EndSt{A \otimes \K}{1 \otimes e} \to \Endo{A \otimes \K} \to \Proj{A \otimes \K}^\times
\)
where the first map is the inclusion and  the second sends $\alpha$ to $\alpha(1 \otimes e)$ is a locally trivial fiber bundle over a paracompact base space and therefore it is  a Hurewicz fibration.
\end{lemma}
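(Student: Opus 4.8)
The plan is to transcribe the proof of Lemma~\ref{lem:AutAKfibration} almost verbatim, replacing the connected components $\uProj{A\otimes\K}$, $\uAut{A\otimes\K}$, $\uEnd{A\otimes\K}$ by their ``invertible'' counterparts $\Proj{A\otimes\K}^\times$, $\Aut{A\otimes\K}$, $\Endo{A\otimes\K}$, and supplying the one extra ingredient that accounts for the fact that the base is now generally disconnected. As in Lemma~\ref{lem:AutAKfibration}, I would carry out the argument for the endomorphism sequence, the automorphism case being completely analogous. Throughout, $\pi(\alpha)=\alpha(1\otimes e)$.

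First I would verify that $\pi$ is a well-defined surjection onto $\Proj{A\otimes\K}^\times$. Both facts are immediate from the equivalence (ii)$\Leftrightarrow$(iii) of Lemma~\ref{lemma:p-tensor-q}: a projection lies in $\Proj{A\otimes\K}^\times$ if and only if it is of the form $\alpha(1\otimes e)$ for some $\alpha\in\Aut{A\otimes\K}$ (for the endomorphism sequence one restricts, as in the definition of $\Endo{A\otimes\K}^\times$, to those full endomorphisms whose image projection has invertible class in $K_0(A)$, which is exactly the condition $\pi(\alpha)\in\Proj{A\otimes\K}^\times$). I would also record that $\Proj{A\otimes\K}^\times=\{p:[p]\in K_0(A)^\times_+\}$ is open in $\Proj{A\otimes\K}$: the assignment $p\mapsto[p]\in K_0(A)$ is locally constant, since $\|p-p'\|<1$ forces $p\sim p'$ and hence $[p]=[p']$. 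In particular $\Proj{A\otimes\K}^\times$ is a union of connected components of $\Proj{A\otimes\K}$ and, as a subspace of the metric space $(A\otimes\K)_{sa}$, is metrizable and therefore paracompact.

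The only genuine novelty is the construction of local trivializations. Fixing $p_0\in\Proj{A\otimes\K}^\times$ and putting $U=\{p\in\Proj{A\otimes\K}^\times:\|p-p_0\|<1\}$, the element $x_p=p_0p+(1-p_0)(1-p)$ is invertible for $p\in U$, so that $u_p=x_p(x_p^*x_p)^{-1/2}$ is a unitary depending norm-continuously on $p$ with ${\rm Ad}_{u_p}(p)=p_0$, exactly as in Lemma~\ref{lem:AutAKfibration}. In the connected-component setting one then moved $1\otimes e$ onto $p_0$ by an \emph{inner} automorphism; this is no longer available, because $p_0$ and $1\otimes e$ typically lie in different connected components of $\Proj{A\otimes\K}^\times$. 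The remedy is to apply Corollary~\ref{Kodaka} (equivalently Lemma~\ref{lemma:p-tensor-q}(ii)) to obtain a possibly \emph{outer} automorphism $\alpha_0\in\Aut{A\otimes\K}$ with $\alpha_0(1\otimes e)=p_0$, and to define the section by $\sigma_{p_0}(p)={\rm Ad}_{u_p^*}\circ\alpha_0$. Then $\sigma_{p_0}(p)(1\otimes e)={\rm Ad}_{u_p^*}(p_0)=p$, so $\sigma_{p_0}\colon U\to\Aut{A\otimes\K}$ is a continuous section of $\pi$ over $U$.

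From here the trivialization is formal and identical to Lemma~\ref{lem:AutAKfibration}: the map $\kappa_U\colon U\times\EndSt{A\otimes\K}{1\otimes e}\to\Endo{A\otimes\K}$, $\kappa_U(p,\beta)=\sigma_{p_0}(p)\circ\beta$, is a homeomorphism onto $\pi^{-1}(U)$ with inverse $\tau_U(\alpha)=\bigl(\alpha(1\otimes e),\,\sigma_{p_0}(\alpha(1\otimes e))^{-1}\circ\alpha\bigr)$, using that each $\beta\in\EndSt{A\otimes\K}{1\otimes e}$ fixes $1\otimes e$. This exhibits $\pi$ as a locally trivial bundle with fiber $\EndSt{A\otimes\K}{1\otimes e}$ over a paracompact base, and such a bundle is a Hurewicz fibration. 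The main obstacle is precisely the point flagged above: since $\Proj{A\otimes\K}^\times$ need not be connected, the inner automorphism used in Lemma~\ref{lem:AutAKfibration} must be replaced by the genuine automorphism furnished by Kodaka's theorem; once this substitution is made, the remainder of the argument transcribes directly from the connected case.
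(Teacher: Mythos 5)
Your proposal is correct and follows essentially the same route as the paper, whose proof consists of exactly the two observations you make: surjectivity of $\pi$ via Lemma~\ref{lemma:p-tensor-q}, after which the argument of Lemma~\ref{lem:AutAKfibration} carries over. Your explicit replacement of the inner automorphism ${\rm Ad}_v$ by a (possibly outer) automorphism $\alpha_0$ with $\alpha_0(1\otimes e)=p_0$ is precisely the point the paper's terse ``with this remark'' is alluding to, and you have verified it correctly.
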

\begin{proof}  Lemma~\ref{lemma:p-tensor-q} shows that
 the map to the base space is surjective.
With this remark, the proof is entirely similar the proof of Lemma~\ref{lem:AutAKfibration}.
\end{proof}
\begin{corollary} \label{cor:AutAK_BUA}
Let $A$ be a  strongly self-absorbing  $C^*$-algebra. Then $\Aut{A \otimes \K} \simeq \Endo{A \otimes \K}^{\times}$ has the homotopy type of a CW complex, which is {homotopy equivalent} to $K_0(A)_{+}^{\times} \times BU(A)$.
\end{corollary}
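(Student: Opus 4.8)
The plan is to run the two fibration sequences of Lemma~\ref{lem:full-AutAKfibration} in parallel, exploiting that they share the common base $\Proj{A \otimes \K}^\times$ and have contractible fibres, and then to analyse that base directly. Theorem~\ref{thm:stabContractible} tells us that the fibres $\AutSt{A \otimes \K}{1 \otimes e}$ and $\EndSt{A \otimes \K}{1 \otimes e}$ of the Hurewicz fibrations
\[
	\Aut{A \otimes \K} \xrightarrow{\ \pi\ } \Proj{A \otimes \K}^\times, \qquad \Endo{A \otimes \K}^\times \xrightarrow{\ \pi'\ } \Proj{A \otimes \K}^\times
\]
are contractible, the relevant total space of the second being $\Endo{A \otimes \K}^\times$, the subspace on which $\alpha \mapsto \alpha(1 \otimes e)$ takes values in $\Proj{A \otimes \K}^\times$. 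From the long exact homotopy sequence (contractible fibre, surjective and $\pi_0$-injective projections), $\pi$ and $\pi'$ are weak homotopy equivalences at every basepoint, so the real work is to understand $\Proj{A \otimes \K}^\times$ and to upgrade these weak equivalences to honest ones.

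First I would pin down the homotopy type of the base. Since $K_0$ is a homotopy invariant of projections, the condition $[p] \in K_0(A)^\times_+$ is constant on path components of $\Proj{A \otimes \K}$, so $\Proj{A \otimes \K}^\times$ is a union of such components; these are moreover open, because any two projections at norm distance less than $1$ are homotopic via the unitary $u_p$ of the proof of Lemma~\ref{lem:AutAKfibration}, whence $\Proj{A \otimes \K}^\times$ is the topological coproduct of its components. I would then index these components by $K_0(A)^\times_+$ through $p \mapsto [p]$: surjectivity is the definition of $K_0(A)_+$, while injectivity follows because $[p] = [p']$ for full projections forces Murray--von Neumann equivalence by cancellation (Theorem~\ref{thm:cancellation}(c)), and a connecting unitary may be chosen in the connected (indeed contractible, \cite{paper:CuntzHigson}) group $U(M(A \otimes \K))$, producing a norm-continuous path of projections from $p$ to $p'$. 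Finally, given $[p] \in K_0(A)^\times_+$, Lemma~\ref{lemma:p-tensor-q} supplies $\alpha \in \Aut{A \otimes \K}$ with $\alpha(1 \otimes e) = p$, and the self-homeomorphism $q \mapsto \alpha(q)$ of $\Proj{A \otimes \K}$ carries the component $\uProj{A \otimes \K}$ of $1 \otimes e$ onto the component of $p$. Combining this with $\uProj{A \otimes \K} \simeq BU(A)$ from Corollary~\ref{cor:CWcomplex} yields
\[
	\Proj{A \otimes \K}^\times \;\cong\; \coprod_{K_0(A)^\times_+} \uProj{A \otimes \K} \;\simeq\; K_0(A)^\times_+ \times BU(A),
\]
with $K_0(A)^\times_+$ discrete; in particular the base has the homotopy type of a CW complex.

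To conclude, I would feed the CW homotopy type of the base together with the contractibility (hence CW type) of the fibres into \cite[Thm.2]{paper:Schoen}, exactly as in Corollary~\ref{cor:CWcomplex}, to deduce that the total spaces $\Aut{A \otimes \K}$ and $\Endo{A \otimes \K}^\times$ also have the homotopy type of CW complexes. Whitehead's theorem then promotes the weak equivalences $\pi$ and $\pi'$ to genuine homotopy equivalences, so
\[
	\Aut{A \otimes \K} \;\simeq\; \Proj{A \otimes \K}^\times \;\simeq\; \Endo{A \otimes \K}^\times \;\simeq\; K_0(A)^\times_+ \times BU(A),
\]
which is the assertion.

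I expect the main obstacle to be the analysis of the base in the second paragraph, in particular verifying that $p \mapsto [p]$ is a bijection onto $K_0(A)^\times_+$, whose injectivity rests on combining cancellation of full projections with the connectedness of the multiplier unitary group in order to turn a $K_0$-identity into an actual path of projections. The remaining steps are essentially formal consequences of the fibration machinery and the CW-approximation results already established earlier in the section.
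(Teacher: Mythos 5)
Your argument is correct and uses exactly the same ingredients as the paper's proof (the two fibrations of Lemma~\ref{lem:full-AutAKfibration} with contractible fibres from Theorem~\ref{thm:stabContractible}, the identification $\pi_0(\Proj{A\otimes\K}^\times)\cong K_0(A)^\times_+$ from Lemma~\ref{lemma:p-tensor-q}, and $\uProj{A\otimes\K}\simeq BU(A)$ from Corollary~\ref{cor:CWcomplex}); the only cosmetic difference is that you decompose the base $\Proj{A\otimes\K}^\times$ into homeomorphic components, whereas the paper decomposes the total space $\Aut{A\otimes\K}$ using its group structure. If anything you are slightly more explicit than the paper about the Whitehead/Sch\"on step and about why $p\mapsto[p]$ is a bijection on components, both of which are left implicit there.
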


\begin{proof}
The equivalence $\Aut{A \otimes \K} \simeq \Endo{A \otimes \K}^{\times}$ follows from lemma \ref{lem:full-AutAKfibration} and Theorem \ref{thm:stabContractible}. Moreover,	
$\Aut{A \otimes \K}$ is the coproduct of its path components, all of which are homeomorphic
to $\uAut{A \otimes \K}$.
By  Theorem~\ref{thm:stabContractible} and Lemma~\ref{lem:AutAKfibration},
$\uAut{A \otimes \K}$ is homotopy equivalent to $\uProj{A \otimes \K}$.
By Lemma~\ref{lemma:p-tensor-q}, $\pi_0(\Proj{A \otimes \K}^\times)\cong K_0(A)^\times_+$.
Thus, using Corollary~\ref{cor:CWcomplex} we have
\[
\Aut{A \otimes \K}\simeq \pi_0(\Proj{A \otimes \K}^\times)\times \uProj{A \otimes \K}
\simeq K_0(A)^{\times}_+\times BU(A).
\qedhere\]
\end{proof}
In the case $A=\mathbb{C}$ this reproves the well-known
fact that $\Aut{\K}\simeq BU(1)\simeq K(\mathbb{Z},2)$
and hence the only non vanishing homotopy group of $\Aut{\K}$ is $\pi_2(\Aut{\K})\cong \pi_2(BU(1))\cong \pi_1(U(1))\cong \mathbb{Z}$. At the same time, for $A\neq \C$, we obtain the following.
\begin{theorem}\label{thm:homotopy-groups}  Let $A\neq \C$ be a strongly self-absorbing C*-algebra. Then there are isomorphisms of groups
\[\pi_i(\Aut{A\otimes \K})=
\begin{cases}
		K_0(A)_{+}^{\times} & \text{if } i=0 \\
			K_i(A) & \text{if } i\geq 1\ .
	\end{cases}
\]
\end{theorem}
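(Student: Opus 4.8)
The plan is to feed the homotopy equivalence of Corollary~\ref{cor:AutAK_BUA}, namely $\Aut{A \otimes \K}\simeq K_0(A)^{\times}_{+}\times BU(A)$, into the long exact sequence of a universal bundle. Since $BU(A)\simeq \uProj{A\otimes\K}$ is path-connected (Corollary~\ref{cor:CWcomplex}) while $K_0(A)^{\times}_{+}$ is discrete, one reads off $\pi_0(\Aut{A\otimes\K})\cong K_0(A)^{\times}_{+}$, which is the case $i=0$; and for $i\geq 1$ the discrete factor is invisible, so $\pi_i(\Aut{A\otimes\K})\cong \pi_i(BU(A))$. The homotopy sequence of the universal fibration $U(A)\to EU(A)\to BU(A)$ with contractible total space then gives $\pi_i(\Aut{A\otimes\K})\cong \pi_{i-1}(U(A))$ for all $i\geq 1$. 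Thus everything reduces to the single statement
\[
	\pi_j\bigl(U(A)\bigr)\cong K_{j+1}(A)\qquad (j\geq 0),
\]
since $\pi_{i-1}(U(A))\cong K_i(A)$ is exactly this with $j=i-1$.

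Next I would record that $A\neq\C$ forces $A$ to be infinite dimensional (a finite-dimensional strongly self-absorbing algebra would be some $M_n$ with $M_n\cong M_n\otimes M_n$, hence $n=1$), so by Theorem~\ref{thm:cancellation}(b) the algebra $A$ is $\ZZ$-stable. The base case $j=0$ is then immediate: $\pi_0(U(A))\cong K_1(A)$ by Theorem~\ref{thm:cancellation}(d). This is precisely where $A\neq\C$ is essential: for $A=\C$ the group $U(\C)=U(1)$ is \emph{not} stabilized and the identity $\pi_j(U(A))\cong K_{j+1}(A)$ fails (it would force $\pi_3(U(1))\cong K_4(\C)\cong\Z$), which is exactly why $\uProj{\K}\simeq BU(1)$ rather than $BU$ in the classical Dixmier--Douady case.

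For $j\geq 1$ I would run a suspension argument. Evaluation at the basepoint of $S^j$ yields a split short exact sequence of unital C*-algebras
\[
	0\to C_0(\R^j)\otimes A \to C(S^j)\otimes A \xrightarrow{\ \mathrm{ev}\ } A\to 0,
\]
split by the inclusion of constants; note that $C(S^j)\otimes A$ is again unital and $\ZZ$-stable, since $C(S^j)\otimes A\otimes\ZZ\cong C(S^j)\otimes A$. Because $U(A)$ is a topological group, hence a simple space, the free homotopy classes decompose as $[S^j,U(A)]\cong \pi_0(U(A))\times\pi_j(U(A))$ for $j\geq1$, and identifying $\pi_0(\mathrm{Map}(S^j,U(A)))$ with $\pi_0(U(C(S^j)\otimes A))$ turns $\mathrm{ev}$ into the projection onto the first factor, so that $\pi_j(U(A))\cong\ker\bigl(\mathrm{ev}_*\colon \pi_0(U(C(S^j)\otimes A))\to\pi_0(U(A))\bigr)$. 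Now I apply Theorem~\ref{thm:cancellation}(d) to both $\ZZ$-stable unital algebras and use the naturality of the isomorphism $\pi_0(U(B))\cong K_1(B)$ (it sends $[u]$ to the $K_1$-class of $u$) to rewrite this kernel as $\ker\bigl(K_1(\mathrm{ev})\colon K_1(C(S^j)\otimes A)\to K_1(A)\bigr)=K_1(C_0(\R^j)\otimes A)$. Finally, Bott periodicity (iterated suspension) identifies $K_1(C_0(\R^j)\otimes A)\cong K_{j+1}(A)$, completing the computation.

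The main obstacle is the middle step for $j\geq1$: one must justify that the \emph{unstabilized} unitary group $U(A)$ already computes $K_1$ of the relevant algebras, and that the identification of $\pi_j(U(A))$ with the reduced summand $K_1(C_0(\R^j)\otimes A)$ is natural. The first point is supplied entirely by $\ZZ$-stability through Theorem~\ref{thm:cancellation}(d) applied to $C(S^j)\otimes A$, so the only genuine care is bookkeeping: checking that the evaluation splitting on the unitary-group side matches the evaluation splitting in $K_1$ under the natural isomorphism, and that the simple-space decomposition of $[S^j,U(A)]$ is compatible with these. Once naturality is in place the remainder is formal.
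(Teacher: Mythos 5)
Your argument is correct, and its first half is exactly the paper's: both proofs read off $\pi_0(\Aut{A\otimes\K})\cong K_0(A)^\times_+$ and $\pi_i(\Aut{A\otimes\K})\cong\pi_i(BU(A))\cong\pi_{i-1}(U(A))$ from Corollary~\ref{cor:AutAK_BUA} and the universal $U(A)$-bundle. The difference lies entirely in the last step. The paper simply cites Jiang's Theorem~3 from \cite{paper:Jiang-nonstable}, which asserts $\pi_{i-1}(U(B))\cong K_i(B)$ in \emph{all} degrees for unital $\ZZ$-stable $B$; you instead use only the degree-zero statement $\pi_0(U(B))\cong K_1(B)$ (recorded as Theorem~\ref{thm:cancellation}(d)) and bootstrap to higher degrees by applying it to $C(S^j)\otimes A$, identifying $\pi_j(U(A))$ with $\ker\bigl(\pi_0(U(C(S^j)\otimes A))\to\pi_0(U(A))\bigr)$ via the mapping-space description and the H-space/simplicity argument, and then invoking naturality of $[u]\mapsto[u]_{K_1}$ together with Bott periodicity. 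This is a legitimate and standard reduction: the key inputs ($C(S^j)\otimes A$ is unital and $\ZZ$-stable because $A\otimes\ZZ\cong A$; the canonical map $U(B)/U_0(B)\to K_1(B)$ is natural; free and based classes agree on the identity component of a topological group) are all available, so your route is self-contained modulo a weaker external citation, at the cost of the bookkeeping you acknowledge. Your observation that $A\neq\C$ forces $A$ infinite dimensional (hence $\ZZ$-stable) and that the argument genuinely breaks for $A=\C$ is also correct and matches the paper's separate treatment of $\Aut{\K}\simeq BU(1)$.
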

\begin{proof} We have seen in the proof of Corollary~\ref{cor:AutAK_BUA} that $\pi_0(\Aut{A\otimes \K})\cong K_0(A)_{+}^{\times}$. If $i\geq 1$,
then by Corollary~\ref{cor:AutAK_BUA},
 $\pi_i(\Aut{A\otimes \K})\cong \pi_i (BU(A))\cong \pi_{i-1}(U(A))$.
On the other hand, since $A$ is $\ZZ$-stable, we have that $\pi_{i-1}(U(A))\cong K_i(A)$ by \cite[Thm.3]{paper:Jiang-nonstable}.
\end{proof}
\begin{corollary}\label{cor:101} Let $A$ be a strongly self-absorbing $C^*$-algebra. There is an exact sequence of
topological groups $1\to \mathrm{Aut}_{0}(A\otimes \K)\to \mathrm{Aut}(A\otimes \K)\to K_0(A)^{\times}_{+}\to 1$.
\end{corollary}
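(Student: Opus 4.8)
The plan is to realize the quotient map of the sequence as the explicit continuous homomorphism
\[
	\Theta \colon \Aut{A \otimes \K} \to K_0(A)^{\times}_{+}, \qquad \Theta(\alpha) = [\alpha(1 \otimes e)],
\]
and to identify its kernel with $\uAut{A \otimes \K}$. First I would record that $G := \Aut{A \otimes \K}$, being a topological group in the point-norm topology, has its identity path-component $G_0 = \uAut{A \otimes \K}$ as a normal subgroup: multiplication and inversion carry paths based at $\id{A \otimes \K}$ to paths based at $\id{A \otimes \K}$, and for $g \in G$ the path $t \mapsto g\,\gamma(t)\,g^{-1}$ witnesses $g G_0 g^{-1} \subseteq G_0$. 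The same argument (if $\gamma$ joins $x$ to $y$, then $x^{-1}\gamma$ joins $\id{A \otimes \K}$ to $x^{-1}y$) shows that the path-components of $G$ are exactly the cosets of $G_0$, so that $G/G_0 = \pi_0(G)$ as groups.

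Next I would verify that $\Theta$ is a well-defined continuous homomorphism onto $K_0(A)^{\times}_{+}$, the latter given the discrete topology. For any $\alpha$ the projection $p = \alpha(1 \otimes e)$ satisfies $p(A \otimes \K)p \cong A$, hence $[p] \in K_0(A)^{\times}_{+}$ by Lemma~\ref{lemma:p-tensor-q}; the identity $\Theta(\alpha \circ \beta) = \Theta(\alpha)\,\Theta(\beta)$ is precisely the multiplicativity of $\Theta$ established in the proof of Lemma~\ref{lemma:p-tensor-q} as a consequence of Lemma~\ref{lem:EckmannHilton1}. Continuity is in fact local constancy: the evaluation $\alpha \mapsto \alpha(1 \otimes e)$ is continuous from the point-norm topology to the norm topology on $\Proj{A \otimes \K}$, while the $K_0$-class map is locally constant there since $\lVert p - q \rVert < 1$ forces $[p] = [q]$. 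Surjectivity is immediate from the implication (iii)$\Rightarrow$(ii) of Lemma~\ref{lemma:p-tensor-q}: every class in $K_0(A)^{\times}_{+}$ is $[\alpha(1 \otimes e)]$ for a suitable automorphism $\alpha$.

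It remains to show $\ker \Theta = G_0$, which is the only point requiring the deeper theory. Since $\Theta$ sends homotopic automorphisms to the same class, it factors through $\pi_0(G)$, and the induced homomorphism $\pi_0(G) \to K_0(A)^{\times}_{+}$ is exactly the isomorphism obtained in Corollary~\ref{cor:AutAK_BUA} (equivalently Theorem~\ref{thm:homotopy-groups}). Being injective, it has trivial kernel, so the only path-component contained in $\ker \Theta$ is that of $\id{A \otimes \K}$; thus $\ker \Theta = G_0$. Combined with the local constancy of $\Theta$, which makes $\ker \Theta$ open and closed, this even identifies the quotient topology on $G/G_0$ with the discrete topology on $K_0(A)^{\times}_{+}$, so
\[
	1 \to \uAut{A \otimes \K} \to \Aut{A \otimes \K} \xrightarrow{\ \Theta\ } K_0(A)^{\times}_{+} \to 1
\]
is exact as a sequence of topological groups, the first map being the (continuous) subspace inclusion. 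I expect the genuine obstacle to be exactly the inclusion $\ker \Theta \subseteq G_0$, as opposed to the easy reverse inclusion: an automorphism with $[\alpha(1\otimes e)] = [1 \otimes e]$ is not visibly homotopic to the identity, and connecting it to $\id{A \otimes \K}$ relies on the $\pi_0$-computation of Corollary~\ref{cor:AutAK_BUA}, which in turn rests on Proposition~\ref{prop:homotopy-of-endoss} and the structure theory of strongly self-absorbing algebras.
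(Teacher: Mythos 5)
Your proof is correct and follows essentially the route the paper intends: the corollary is stated without proof precisely because it is the group-theoretic restatement of the identification $\pi_0(\Aut{A\otimes\K})\cong K_0(A)^{\times}_{+}$ from Corollary~\ref{cor:AutAK_BUA} and Theorem~\ref{thm:homotopy-groups}, realized by the same map $\alpha\mapsto[\alpha(1\otimes e)]$ you use. Your explicit verification of local constancy of $\Theta$ and of the discreteness of the quotient topology merely fills in details the paper leaves implicit.
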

\begin{remark}\label{rem:symmetry} The exact sequence $1\to \mathrm{Aut}_{0}(\OO_\infty\otimes \K)\to \mathrm{Aut}(\OO_\infty\otimes \K)\to\mathbb{Z}/2\to 1$ is split, since by \cite{paper:Benson-Kumjian-Phillips} there is an order-two automorphism $\alpha$
of $\OO_\infty\otimes \K$ such that $\alpha_*=-1$ on $K_0(\OO_\infty)$.
\end{remark}
\begin{corollary}\label{cor:102} Let $A\neq \C$ be a strongly self-absorbing $C^*$-algebra.
The natural map \newline $\mathrm{Aut}_{0}(A\otimes \K)\to \mathrm{Aut}_{0}(\OO_\infty\otimes A\otimes \K)$  is
a homotopy equivalence.
\end{corollary}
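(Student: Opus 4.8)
The plan is to show that the natural map $\iota_\otimes\colon\uAut{A\otimes\K}\to\uAut{\OO_\infty\otimes A\otimes\K}$, $\alpha\mapsto\id{\OO_\infty}\otimes\alpha$, is a weak homotopy equivalence. Since the class of strongly self-absorbing C*-algebras is closed under tensor products, $\OO_\infty\otimes A$ is again strongly self-absorbing and $\neq\C$, so every result of this section applies to it; in particular both source and target have the homotopy type of a CW-complex by Corollary~\ref{cor:CWcomplex}, and Whitehead's theorem will then upgrade a weak equivalence to an honest homotopy equivalence. As both spaces are path-connected, it suffices to establish an isomorphism on $\pi_i$ for $i\geq 1$.

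First I would set up a commutative ladder comparing the fibration sequence of Lemma~\ref{lem:AutAKfibration} for $A$ with the one for $\OO_\infty\otimes A$. The vertical maps are $\alpha\mapsto\id{\OO_\infty}\otimes\alpha$ on the stabilizers and total spaces, and $p\mapsto 1_{\OO_\infty}\otimes p$ on the bases $\uProj{-}$; the base points are compatible since $1_{\OO_\infty}\otimes 1_A\otimes e=1_{\OO_\infty\otimes A}\otimes e$. By Theorem~\ref{thm:stabContractible} the fibers $\AutSt{A\otimes\K}{1\otimes e}$ and $\AutSt{\OO_\infty\otimes A\otimes\K}{1\otimes e}$ are both contractible, so passing to long exact sequences reduces the claim to showing that the base map $\uProj{A\otimes\K}\to\uProj{\OO_\infty\otimes A\otimes\K}$, $p\mapsto 1_{\OO_\infty}\otimes p$, is a weak equivalence.

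Next I would identify this base map under the equivalences $\uProj{A\otimes\K}\simeq BU(A)$ from Corollary~\ref{cor:CWcomplex}. The embedding $m\mapsto 1_{\OO_\infty}\otimes m$ of $M(A\otimes\K)$ into $M(\OO_\infty\otimes A\otimes\K)$ induces a map of the universal principal bundles of Corollary~\ref{cor:CWcomplex}, equivariant for the inclusion $U(A)\hookrightarrow U(\OO_\infty\otimes A)$, $v\mapsto 1_{\OO_\infty}\otimes v$, and covering $p\mapsto 1_{\OO_\infty}\otimes p$. By universality the base map is therefore, up to homotopy, $BU(\iota)$ for the unital inclusion $\iota\colon A\to\OO_\infty\otimes A$. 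On homotopy groups this is $\pi_{i-1}(U(A))\to\pi_{i-1}(U(\OO_\infty\otimes A))$, which under the natural isomorphism $\pi_{i-1}(U(B))\cong K_i(B)$ for $\ZZ$-stable unital $B$ (Theorem~\ref{thm:cancellation} together with \cite[Thm.3]{paper:Jiang-nonstable}) becomes the map $\iota_*\colon K_i(A)\to K_i(\OO_\infty\otimes A)$ induced by $a\mapsto 1_{\OO_\infty}\otimes a$.

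Finally I would observe that $\iota$ is a $KK$-equivalence: the unital embedding $\C\to\OO_\infty$ is a $KK$-equivalence, and tensoring this equivalence with the nuclear algebra $A$ shows $\iota$ is one as well, so $\iota_*$ is an isomorphism in every degree. The base map is then a weak equivalence, the ladder forces $\iota_\otimes$ to be one, and Whitehead's theorem completes the argument. The main obstacle is the bookkeeping in the third step: one must verify carefully that the geometric map $p\mapsto 1_{\OO_\infty}\otimes p$ really does realize $BU(\iota)$ under the identifications of Corollary~\ref{cor:CWcomplex}, so that its effect on homotopy groups is precisely the K-theory map induced by $\iota$ rather than a less transparent homomorphism. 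Once this naturality is pinned down, the isomorphism on homotopy groups follows immediately from the $KK$-equivalence. (Note that the hypothesis $A\neq\C$ is essential here, since for $A=\C$ the identification $\pi_{i-1}(U(A))\cong K_i(A)$ fails and $\uAut{\K}\simeq BU(1)$ is \emph{not} equivalent to $\uAut{\OO_\infty\otimes\K}$.)
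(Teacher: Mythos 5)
Your proof is correct and follows essentially the same route as the paper, which simply cites the CW-structure from Corollary~\ref{cor:CWcomplex} and the homotopy-group computation of Theorem~\ref{thm:homotopy-groups} and invokes Whitehead. The one thing you add is the careful verification that the \emph{specific} map $\alpha\mapsto\id{\OO_\infty}\otimes\alpha$ induces the isomorphisms on $\pi_i$ (via the fibration ladder, the identification of the base map with $BU(\iota)$ for the unital inclusion $\iota\colon A\to\OO_\infty\otimes A$, and the fact that $\iota$ is a $KK$-equivalence) --- a naturality point the paper's two-line proof leaves implicit but which is genuinely needed to apply Whitehead's theorem.
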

\begin{proof} $\mathrm{Aut}_{0}(A\otimes \K)\to \mathrm{Aut}_{0}(\OO_\infty\otimes A  \otimes \K)$ is given by
$\alpha\mapsto \mathrm{id}_{\OO_\infty}\otimes \alpha$.
Both spaces have the homotopy type of a CW-complex by Corollary~\ref{cor:CWcomplex} and they are {weakly homotopy equivalent} by Thm.~\ref{thm:homotopy-groups}.
\end{proof}

\begin{theorem}\label{thm:MapsEndAK}
Let $A\neq \C$ be a strongly self-absorbing $C^*$-algebra and let $X$ be a  compact metrizable space.
The map $\Theta \colon [X, \Endo{A \otimes \K}] \to K_0(C(X) \otimes A)_+$ given by $\Theta([\alpha]) = [\alpha(1 \otimes e)]$ is an isomorphism of commutative semirings.
  $\Theta$ restricts to a group isomorphism $[X, \Aut{A \otimes \K}] \to K_0(C(X) \otimes A)_+^{\times}$.
  If $X$ is connected, then
$K_0(C(X) \otimes A)_+^{\times}\cong K_0(A)^{\times}_{+}\oplus K_0(C_0(X\setminus x_0)\otimes A).$
If  $A$ is purely infinite, then $K_0(C(X) \otimes A)_+=K_0(C(X) \otimes A)$
and $\Theta$ is an isomorphism of  rings.
\end{theorem}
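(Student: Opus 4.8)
The plan is to write $B=C(X)\otimes A$ and exploit two facts: that $B\cong B\otimes A$ (since $A\cong A\otimes A$), and that, by the identifications recorded just before the theorem, a continuous map $X\to\Endo{A\otimes\K}$ is the same datum as a full $*$-homomorphism $\hat\alpha\colon A\otimes\K\to B\otimes\K$. Under this dictionary $\Theta([\alpha])=[\hat\alpha(1\otimes e)]$ is the class of a full projection, so $\Theta$ lands in $K_0(B)_+$ and is homotopy invariant. First I would check that $\Theta$ is a semiring homomorphism. Multiplicativity $\Theta([\alpha]\ast[\beta])=\Theta([\alpha])\,\Theta([\beta])$ is immediate, since the operation $\ast$ of Lemma~\ref{lem:EckmannHilton1} and the ring product on $K_0(B)$ are built from the same isomorphism $\psi$ carrying $1\otimes e$ to $1\otimes e\otimes 1\otimes e$ (this is the computation already used in the proof of Lemma~\ref{lemma:p-tensor-q}). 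Additivity holds for the fibrewise direct sum $[\alpha]\oplus[\beta]=[s_1\,\hat\alpha(\cdot)\,s_1^\ast+s_2\,\hat\beta(\cdot)\,s_2^\ast]$ built from isometries $s_1,s_2\in M(B\otimes\K)$ with $s_1s_1^\ast+s_2s_2^\ast=1$, the two summands being orthogonal; this is independent of the $s_i$ because $U(M(B\otimes\K))$ is connected.

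Next I would prove bijectivity. Injectivity is precisely Proposition~\ref{prop:homotopy-of-endoss}: as $B\cong B\otimes A$, two full $*$-homomorphisms $A\otimes\K\to B\otimes\K$ agreeing in $K_0$ on $1\otimes e$ are homotopic, hence represent the same class in $[X,\Endo{A\otimes\K}]$. For surjectivity onto $K_0(B)_+$, I would start from a full projection $p\in B\otimes\K$ and note that the corner $p(B\otimes\K)p$ is a full corner of the $A$-absorbing algebra $B\otimes\K$, hence itself $A$-absorbing by \cite[Cor.3.1]{paper:TomsWinter}; composing $a\mapsto 1\otimes a$ with an isomorphism $p(B\otimes\K)p\otimes A\cong p(B\otimes\K)p$ gives a unital $*$-homomorphism $A\to p(B\otimes\K)p$, and Lemma~\ref{lem:homotopy-of-endos} upgrades it to a full $\varphi$ with $\varphi(1\otimes e)=p$, so $\Theta([\varphi])=[p]$. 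Transporting the commutative semiring structure of $K_0(B)_+$ along this bijection makes $[X,\Endo{A\otimes\K}]$ a commutative semiring and $\Theta$ an isomorphism.

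To obtain the group isomorphism I would use that a semiring isomorphism restricts to a bijection of the groups of multiplicatively invertible elements, which on the target are the positive units $K_0(B)_+^\times$. To pin down the invertible classes on the source I would invoke the fibration of Lemma~\ref{lem:full-AutAKfibration} with contractible fibre $\AutSt{A\otimes\K}{1\otimes e}$ (Theorem~\ref{thm:stabContractible}): the map $\alpha\mapsto\alpha(1\otimes e)$ is then a homotopy equivalence $\Aut{A\otimes\K}\simeq\Proj{A\otimes\K}^\times$, so $[X,\Aut{A\otimes\K}]\cong[X,\Proj{A\otimes\K}^\times]$. By Lemma~\ref{lemma:p-tensor-q} a map $X\to\Proj{A\otimes\K}^\times$ is a continuous field of projections $p$ with $[p(x)]\in K_0(A)_+^\times$ for every $x$. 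Since $\Theta$ is a group homomorphism into the multiplicative monoid $K_0(B)_+$, its image on $[X,\Aut{A\otimes\K}]$ is a subgroup consisting of units; conversely every $u\in K_0(B)_+^\times$ is realised by the field $p$ representing it, whose fibres $[p(x)]=\mathrm{ev}_x(u)$ are automatically positive units of $K_0(A)$. Hence $\Theta$ maps $[X,\Aut{A\otimes\K}]$ bijectively onto $K_0(B)_+^\times$, and this argument simultaneously shows the inverse of a positive unit is positive. I expect this last identification — matching homotopical invertibility of an automorphism with ring invertibility of its $K_0$-datum — to be the main obstacle, and the contractible-fibre fibration is the cleanest way to resolve it, circumventing any appeal to approximate intertwining.

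For the final two assertions I would, when $X$ is connected, fix a base point $x_0$ and use the resulting split surjection of rings $K_0(B)\to K_0(A)$ — split by the unital inclusion of constant fields — whose kernel is the ideal $K_0(C_0(X\setminus x_0)\otimes A)$. Restricting to positive units gives a split short exact sequence $1\to N\to K_0(B)_+^\times\to K_0(A)_+^\times\to 1$ of abelian groups, where $N$ is the group of positive units trivial at $x_0$; identifying $N$ with $K_0(C_0(X\setminus x_0)\otimes A)$ yields the stated direct sum, and care is needed here in describing this identification. Finally, if $A$ is purely infinite then $K_0(A)_+=K_0(A)$, and the same holds for the $\OO_\infty$-absorbing algebra $B=C(X)\otimes A$; thus $K_0(B)_+=K_0(B)$ is already a ring, the transported addition on $[X,\Endo{A\otimes\K}]$ acquires additive inverses, and $\Theta$ becomes an isomorphism of rings.
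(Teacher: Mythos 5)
Your treatment of the semiring isomorphism (additivity via the Cuntz sum, multiplicativity via Lemma~\ref{lem:EckmannHilton1}, surjectivity from $A$-absorption of full corners combined with Lemma~\ref{lem:homotopy-of-endos}, injectivity from Proposition~\ref{prop:homotopy-of-endoss}(i)) matches the paper. For the restriction to $[X,\Aut{A\otimes\K}]$ you take a genuinely different route: the paper lifts a positive unit $[p]$ with positive inverse $[q]$ to homomorphisms $\varphi,\psi$, upgrades the homotopy $[\tilde\varphi\circ\tilde\psi]=[\id{}]$ to approximate unitary equivalence via Proposition~\ref{prop:homotopy-of-endoss}(ii), and then invokes the intertwining result \cite[Cor.2.3.4]{Ror:encyclopedia} to replace $\tilde\varphi$ by a genuine $C(X)$-automorphism; you instead lift the projection-valued map $X\to\Proj{A\otimes\K}^{\times}$ through the Hurewicz fibration of Lemma~\ref{lem:full-AutAKfibration}, whose fibre is contractible by Theorem~\ref{thm:stabContractible}. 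Both work, and your route sidesteps the intertwining machinery (it is the same mechanism the paper uses in Corollary~\ref{cor:AutAK_BUA}). One point you should make explicit: to place $[p(x)]$ in $K_0(A)_{+}^{\times}$ in the sense required by Lemma~\ref{lemma:p-tensor-q} you need a \emph{positive} inverse, so you should evaluate the positive inverse $[q]$ fibrewise rather than only observing that $[p(x)]$ is a unit of $K_0(A)$.

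The genuine gap is in the connected-$X$ decomposition, precisely at the step you flag with ``care is needed'' and then leave unaddressed. The split exact sequence $1\to N\to K_0(C(X)\otimes A)_{+}^{\times}\to K_0(A)_{+}^{\times}\to 1$ is the easy part; the content of the assertion is that $N$, the group of positive units restricting to $1$ at $x_0$, is \emph{all} of $1+K_0(C_0(X\setminus x_0)\otimes A)$ and is isomorphic as a group to the additive group $K_0(C_0(X\setminus x_0)\otimes A)$. This needs two inputs your outline omits: (a) $K_0(C_0(X\setminus x_0)\otimes A)$ is a nil ideal of the ring $K_0(C(X)\otimes A)$ (argued as in \cite[Prop.5.6]{paper:DadarlatPiAutAK}), so that $1+\nu$ is invertible for every $\nu$ in the ideal and the identification of $1+N$ with the additive group can be justified; and (b) every $\sigma\in K_0(C(X)\otimes A)$ with $\sigma_{x_0}\in K_0(A)_{+}\setminus\{0\}$ is itself positive, i.e.\ represented by a full projection. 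Point (b) is automatic when $A$ is purely infinite, but in the stably finite case the paper proves it by a comparison argument using the unique trace of $A$ and \cite[Cor.\ 4.9, 4.10]{paper:Rordam-ZStable}. Without (a) and (b) you only obtain an inclusion $N\subseteq 1+K_0(C_0(X\setminus x_0)\otimes A)$, and the claimed direct sum decomposition does not follow.
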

\begin{proof} \label{pf:MapsEndAK}
Let $\psi \colon A \otimes \K \to (A \otimes \K)^{\otimes 2}$ and $l$ be as in Lemma \ref{lem:EckmannHilton1}.
 The additivity of $\Theta$ is easily verified.
  Let $\alpha, \beta \in C(X,\Endo{A \otimes \K})$, then by Lemma \ref{lem:EckmannHilton1}:
\begin{align*}
	[(\alpha \circ \beta)(1 \otimes e)] &= [(\alpha \ast \beta)(1 \otimes e)] = [\psi^{-1} \circ (\alpha \otimes \beta) \circ \psi(1 \otimes e)] \\
	& = [\psi^{-1}(\alpha(1 \otimes e) \otimes \beta(1 \otimes e))] = [\alpha(1 \otimes e)] \cdot [\beta(1 \otimes e)]\ ,
\end{align*}
which shows that $\Theta \colon [X,\Endo{A \otimes \K}] \to K_0(C(X) \otimes A)_+$ is a homomorphism of semirings.  Let $p\in C(X)\otimes A\otimes \K$ be a full projection. Then,  $p(C(X)\otimes A\otimes \K)p$ is $A$-absorbing by \cite[Cor.3.1]{paper:TomsWinter}. It follows that
  $\Theta$ is surjective by Lemma~\ref{lem:homotopy-of-endos}. For injectivity we apply Proposition~\ref{prop:homotopy-of-endoss}(i).

Next we show that the image of the restriction of $\Theta$ to $[X,\Aut{A\otimes \K}]$
coincides with $K_0(C(X)\otimes \K)^\times_{+}$. Let $p\in \Proj{C(X)\otimes A \otimes \K}^\times$.
By assumption, there is $q\in \Proj{C(X)\otimes A \otimes \K}^\times$ such that $[p][q]=1$ in the ring $K_0(C(X)\otimes A)$.
By  Lemma~\ref{lem:homotopy-of-endos} there are $\varphi,\psi \in \mathrm{Hom}(A\otimes \K, C(X)\otimes A \otimes \K)$ such that $\varphi(1\otimes e)=p$
and $\psi(1\otimes e)=q$. Let $\tilde{\varphi},\tilde{\psi}\in \mathrm{End}_{C(X)}(C(X)\otimes A\otimes \K)$
be the unique $C(X)$-linear extensions of $\varphi$ and $\psi$. Note that if $\iota:A\otimes \K\to C(X)\otimes A \otimes \K$
is the inclusion $\iota(a)=1_{C(X)}\otimes a$, then $\tilde{\iota}=\id{C(X)\otimes A \otimes \K}$.
Since $[p][q]=[1]$ in $K_0(C(X)\otimes A)$, it follows that $[\tilde{\varphi}\circ \psi]=[\tilde{\psi}\circ \varphi]=[\iota]\in [ A\otimes \K,C(X)\otimes A\otimes \K]$. By Proposition~\ref{prop:homotopy-of-endoss}(ii) it follows that $\tilde{\varphi}\circ \psi \approx_{u} \iota \approx_{u} \tilde{\psi}\circ \varphi$. This clearly implies that $\tilde{\varphi}\circ \tilde{\psi} \approx_{u} \id{C(X)\otimes A \otimes \K} \approx_{u} \tilde{\psi}\circ \tilde{\varphi}$.
By \cite[Cor.2.3.4]{Ror:encyclopedia} it follows that there is an automorphism $\alpha\in \mathrm{Aut}_{C(X)}(C(X)\otimes A\otimes \K)$
such that $\alpha \approx_{u} \tilde{\varphi}$. In particular we have that $[\alpha(1\otimes e)]=[\tilde{\varphi}(1\otimes e)]=[p]$.

It remains to verify the isomorphism $K_0(C(X)\otimes A)^{\times}_{+}\cong K_0(A)^{\times}_{+}\oplus K_0(C_0(X\setminus x_0)\otimes A).$
Evaluation at $x_0$ induces a split exact sequence
\(
0\to K_0(C_0(X\setminus x_0)\otimes A) \to K_0(C(X)\otimes A) \to K_0(A)\to 0.
\)
Arguing as in the proof of \cite[Prop.5.6]{paper:DadarlatPiAutAK}, one verifies that $K_0(C_0(X\setminus x_0)\otimes A)$ is a nil-ideal of the ring $K_0(C(X)\otimes A)$. Thus an element $\sigma\in K_0(C(X)\otimes A)$ is invertible if and only if its restriction $\sigma_{x_0}\in K_0(A)$ is invertible. Consequently
\(
K_0(C(X)\otimes A)^{\times}\cong K_0(A)^{\times}\oplus K_0(C_0(X\setminus x_0)\otimes A).
\)
It remains to verify that an element $\sigma\in K_0(C(X)\otimes A)$ is positive if $\sigma_{x_0}\in K_0(A)_{+}\setminus \{0\}$. It suffices to consider the case when $A$ is stably finite. Let $\tau$ denote the unique
trace  state of $A$.  Its extension to a trace state on  $A\otimes M_n(\C)$ is denoted again by $\tau$.
Then any continuous trace $\eta$ on $C(X)\otimes A \otimes M_n(\mathbb{C})$ is of the form
$\eta(f)=\int_X \tau(f)d\mu$ for some finite Borel measure $\mu$ on $X$.
Write $\sigma=[p]-[q]$ where  $p,q\in \Proj{C(X)\otimes A \otimes M_n(\mathbb{C})}$ are full projections.
Let    $r$ be a nonzero projection in $A\otimes M_n(\mathbb{C})$ such that $[p(x_0)]-[q(x_0)]=[r]$.
Since $X$ is connected it follows that $[p(x)]-[q(x)]=[r]\in K_0(A)$ for all $x\in X$.
From this we see that  any point $x\in X$ has a closed neighborhood $V$ such that
$[p_V]-[q_V]=[r]\in K_0(C(V)\otimes A)$. Since $\tau(r)>0$ it follows immediately
that $\eta(p)>\eta(q)$ for all nonzero finite traces $\eta$ on  $A\otimes M_n(\C)$.
We apply Corollaries 4.9 and 4.10 of \cite{paper:Rordam-ZStable} to conclude that $[p]-[q]\in K_0(C(X)\otimes A)_{+}$.
\end{proof}

\begin{corollary} \label{cor:Jiang-Su/Cuntz-infinity}
Let $X$ be a compact  connected metrizable space. Then there are isomorphism of multiplicative groups
\[
[X,\Aut{\ZZ \otimes \K}]\cong K^0(X)^\times_{+}=1+\widetilde{K}^0(X),
\]
\[
[X,\Aut{\OO_\infty \otimes \K}]\cong K^0(X)^\times=\pm 1+\widetilde{K}^0(X).
\]
\end{corollary}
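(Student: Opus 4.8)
The plan is to deduce both isomorphisms directly from Theorem~\ref{thm:MapsEndAK} applied to the two specific algebras $A=\ZZ$ and $A=\OO_\infty$, so that everything reduces to identifying the group of positive units of the ring $K_0(C(X)\otimes A)$ with the corresponding subgroup of $K^0(X)$. First I would record that $\ZZ$ and $\OO_\infty$ are both strongly self-absorbing, satisfy the UCT, and have $K_0(A)\cong\Z$ (with order unit $[1_A]$) and $K_1(A)=0$; in particular $A\neq\C$, so Theorem~\ref{thm:MapsEndAK} yields a group isomorphism $[X,\Aut{A\otimes\K}]\cong K_0(C(X)\otimes A)^{\times}_{+}$.

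The central structural input is that the unital inclusion $\C\hookrightarrow A$ is a $KK$-equivalence: both algebras satisfy the UCT and have the $K$-theory of $\C$ (cf.\ Theorem~\ref{thm:cancellation}(e) for $K_1(A)=0$), so the inclusion represents an invertible class in $KK(\C,A)$. Taking the external product with $\mathrm{id}_{C(X)}$ gives an invertible element of $KK(C(X),C(X)\otimes A)$ and hence an isomorphism $K^0(X)=K_0(C(X))\cong K_0(C(X)\otimes A)$. Because the chosen $KK$-equivalence is unital and multiplicative, this isomorphism carries the unit to the unit and intertwines the cup product on $K^0(X)$ with the tensor-product ring structure used to define the operation $\ast$ in Lemma~\ref{lem:EckmannHilton1}; thus it is a ring isomorphism.

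Next I would match the order structures. For $A=\ZZ$, which is stably finite with $K_0(\ZZ)_+=\N$, the above isomorphism transports $K_0(C(X)\otimes\ZZ)_+$ onto the cone $K^0(X)_+$ of classes of genuine vector bundles; the trace/positivity criterion of R\o rdam already used in the proof of Theorem~\ref{thm:MapsEndAK} shows that positivity over $X$ is detected fibrewise, so the two cones correspond. This gives $[X,\Aut{\ZZ\otimes\K}]\cong K^0(X)^{\times}_{+}$. For $A=\OO_\infty$, which is purely infinite, Theorem~\ref{thm:MapsEndAK} gives $K_0(C(X)\otimes\OO_\infty)_+=K_0(C(X)\otimes\OO_\infty)$, so every unit is positive and $[X,\Aut{\OO_\infty\otimes\K}]\cong K^0(X)^{\times}$.

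Finally, for connected $X$ I would compute these unit groups explicitly using that $\widetilde{K}^0(X)$ is a nil-ideal of $K^0(X)$, the fact already invoked in the proof of Theorem~\ref{thm:MapsEndAK}. Restriction to a basepoint $x_0$ splits $K^0(X)\cong\Z\oplus\widetilde{K}^0(X)$ as rings, and since $\widetilde{K}^0(X)$ is nil an element $\sigma$ is a unit iff $\sigma_{x_0}\in\Z^{\times}=\{\pm1\}$, and a positive unit iff $\sigma_{x_0}=1$. Hence $K^0(X)^{\times}_{+}=1+\widetilde{K}^0(X)$ and $K^0(X)^{\times}=\pm1+\widetilde{K}^0(X)$, each a genuine multiplicative group since $\pm1+x$ is invertible with inverse the finite sum $\pm\sum_{j\ge0}x^j$. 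The main obstacle I expect is not any single computation but the careful verification that the $KK$-equivalence $\C\sim A$ transports the \emph{full ordered ring} structure over the parameter space $X$---in particular matching the positive cones and the tensor-product multiplication---rather than merely the underlying additive $K$-groups.
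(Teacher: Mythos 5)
Your proposal is correct and follows essentially the route the paper intends: the corollary is stated without proof as an immediate consequence of Theorem~\ref{thm:MapsEndAK}, whose final clause already gives $K_0(C(X)\otimes A)^{\times}_{+}\cong K_0(A)^{\times}_{+}\oplus K_0(C_0(X\setminus x_0)\otimes A)$, so one only needs the $KK$-equivalence $\C\simeq\ZZ,\OO_\infty$ to identify the right-hand side with $1+\widetilde{K}^0(X)$ (resp.\ $\pm 1+\widetilde{K}^0(X)$), exactly as you do. Your only slight overstatement is the claim that the isomorphism carries $K_0(C(X)\otimes\ZZ)_+$ onto the cone of genuine vector bundle classes (for infinite-dimensional $X$ this need not hold), but this is harmless since the final computation of the unit groups uses only the basepoint-restriction criterion, not the full matching of cones.
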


\subsection{The topological group $\Aut{A \otimes \K}$ is well-pointed}
Since we would like to apply the nerve construction to obtain classifying spaces of the topological monoids $\Aut{A \otimes \K}$ and $\Endo{A \otimes \K}^{\times}$, we will need to show that $\Aut{A \otimes \K}$ is well-pointed. This notion is defined as follows:

\begin{definition}\label{def:wellpointed}
Let $X$ be a topological space, $A \subset X$ a closed subspace. The pair $(X, A)$ is called a \emph{neighborhood deformation retract} (or \emph{NDR-pair} for short) if there is a map $u \colon X \to I=[0,1]$ such that $u^{-1}(0) = A$ and a homotopy $H \colon X \times I \to X$ such that $H(x,0) = x$ for all $x\in X$, $H(a,t) = a$ for $a \in A$ and $t \in I$ and $H(x,1) \in A$ if $u(x) < 1$. A pointed topological space $X$ with basepoint $x_0 \in X$ is said to have a \emph{non-degenerate basepoint} or to be \emph{well-pointed} if the pair $(X,x_0)$ is an NDR-pair.
\end{definition}

Recall that a neighborhood  $V$ of $x_0$ deformation retracts to $x_0$
if there is a continuous map $h:V \times I \to V$ such that $h(x,0)=x$, $h(x_0,t)=x_0$ and $h(x,1)=x_0$
for all $x\in V$ and $t\in I$. The following lemma is contained in \cite[Thm.2]{paper:A.Strom}.
\begin{lemma}\label{lem:NDRcontractible}
Let $(X,x_0)$ be a pointed topological space together with a continuous map $v \colon X \to I$  such that $x_0=v^{-1}(0)$ and $V = \{x \in X \colon v(x)<1\}$ deformation retracts to $x_0$. Then $(X,x_0)$ is an NDR-pair.
\end{lemma}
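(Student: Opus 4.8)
The plan is to produce directly the two pieces of data required by Definition~\ref{def:wellpointed}: a function $u\colon X\to I$ with $u^{-1}(0)=\{x_0\}$ and a homotopy $H\colon X\times I\to X$ that is stationary on $x_0$, starts at the identity, and ends in $x_0$ on $\{u<1\}$. All the input is the deformation retraction $h\colon V\times I\to V$ (with $h(\cdot,0)=\id{V}$, $h(\cdot,1)\equiv x_0$ and $h(x_0,\cdot)=x_0$) together with the given $v$. The naive attempt --- running $h$ on $V$ and the constant homotopy on $X\setminus V=\{v=1\}$ --- fails, because $h$ is only defined and continuous on $V\times I$ and there is no reason for $h(x,s)$ to stay close to $x$ as $x$ approaches $\{v=1\}$; so the two recipes need not glue continuously. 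Making this gluing work is the entire content of the argument.

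First I would rescale so as to separate the region where the retraction must complete from the boundary. Set $u(x)=\min(1,2v(x))$, so that $u^{-1}(0)=v^{-1}(0)=\{x_0\}$ and $\{u<1\}=\{v<\tfrac12\}$. Next I choose a continuous throttle $\sigma\colon X\to[0,1]$ with $\sigma\equiv 1$ on $\{v\le\tfrac12\}$ and $\sigma\equiv 0$ on the \emph{collar} $\{v\ge\tfrac34\}$, for instance $\sigma(x)=\min\bigl(1,\max(0,3-4v(x))\bigr)$, and define
\[
H(x,t)=
\begin{cases}
h\bigl(x,\sigma(x)\,t\bigr) & \text{if } v(x)\le \tfrac34,\\
x & \text{if } v(x)\ge \tfrac34.
\end{cases}
\]
The algebraic NDR conditions are then routine: $H(x,0)=h(x,0)=x$; since $v(x_0)=0$ and $h(x_0,\cdot)=x_0$ we get $H(x_0,t)=x_0$ for all $t$; and if $u(x)<1$, i.e.\ $v(x)<\tfrac12$, then $\sigma(x)=1$, whence $H(x,1)=h(x,1)=x_0$.

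The main obstacle --- and the reason for freezing $H$ on an entire collar rather than merely on $\{v=1\}$ --- is the continuity of $H$. I would establish it with the pasting lemma applied to the two \emph{closed} sets $\{v\le\tfrac34\}\times I$ and $\{v\ge\tfrac34\}\times I$, whose union is $X\times I$. On the second set $H$ is the projection $(x,t)\mapsto x$, clearly continuous. On the first set the decisive point is that $\{v\le\tfrac34\}$ is a closed subspace contained in $V=\{v<1\}$, so $(x,t)\mapsto(x,\sigma(x)\,t)$ carries it continuously into the domain $V\times I$ of $h$, and there $H=h\circ(\id{},\sigma\cdot t)$ is a composite of continuous maps; crucially $h$ is never evaluated near $\partial V$. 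On the overlap $\{v=\tfrac34\}\times I$ one has $\sigma\equiv 0$, so both formulas reduce to $h(x,0)=x$ and the two continuous pieces agree.

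Thus $H$ is continuous on $X\times I$, and $(X,x_0)$ is an NDR-pair. The single genuine idea is that damping the reparametrisation to zero on a neighbourhood of $\{v=1\}$ simultaneously confines $h$ to a closed set lying strictly inside $V$ (which is what saves continuity) and pushes $\{u<1\}$ into the interior region $\{v<\tfrac12\}$ where $\sigma\equiv1$ and the retraction genuinely reaches $x_0$; everything else is formal verification.
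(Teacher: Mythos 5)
Your construction is correct: $u=\min(1,2v)$ has $u^{-1}(0)=\{x_0\}$, the cutoff $\sigma$ glues the reparametrised retraction $h(x,\sigma(x)t)$ with the constant homotopy along the closed overlap $\{v=\tfrac34\}$ where both reduce to $h(x,0)=x$, and the three NDR conditions follow exactly as you verify them; in particular you correctly use that the paper's notion of ``deformation retracts to $x_0$'' fixes the basepoint throughout, which is what gives $H(x_0,t)=x_0$. The paper itself gives no argument here --- it simply records that the lemma is contained in Str{\o}m's Theorem~2 (\emph{Note on cofibrations}), which characterises cofibrations by the existence of such a function $u$ together with a halo of $A$ deformable to $A$ rel $A$. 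Your proof is in effect a self-contained proof of the relevant implication of Str{\o}m's theorem in the special case $A=\{x_0\}$, and the one genuine idea you isolate --- damping the deformation to zero on a collar of $\{v=1\}$ so that $h$ is only ever evaluated on a closed set strictly inside $V$, while keeping $\sigma\equiv 1$ on $\{u<1\}$ --- is precisely the mechanism behind Str{\o}m's argument. What the citation buys the paper is brevity; what your version buys is that the reader sees explicitly why the hypothesis ``$V$ deformation retracts to $x_0$ inside $V$'' (rather than a deformation in all of $X$) suffices, since the homotopy never leaves $V$ on the region where it is nonconstant.
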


\begin{proposition}\label{prop:wellpointed}
Let $A$ be a strongly self-absorbing $C^*$-algebra. Then the topological monoids $\Aut{A \otimes \K}$ and $\Endo{A \otimes \K}^{\times}$ are well-pointed.
\end{proposition}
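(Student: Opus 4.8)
The basepoint of both monoids is $\id{A \otimes \K}$, and by Lemma~\ref{lem:NDRcontractible} it suffices to produce a continuous map $v \colon \Aut{A \otimes \K} \to [0,1]$ with $v^{-1}(0) = \{\id{A \otimes \K}\}$ whose sublevel set $\Omega = \{v < 1\}$ deformation retracts to $\id{A \otimes \K}$ \emph{inside} $\Omega$. The plan is to build both $v$ and the retraction from the bundle structure of Lemma~\ref{lem:full-AutAKfibration}: the evaluation $\pi(\alpha) = \alpha(1 \otimes e)$ exhibits $\Aut{A \otimes \K}$ as a locally trivial bundle over $\Proj{A \otimes \K}^\times$ with fibre $\AutSt{A \otimes \K}{1 \otimes e}$, and this fibre is contractible by Theorem~\ref{thm:stabContractible}. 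Near the basepoint projection $e_0 = 1 \otimes e$ the bundle is trivialized by the explicit chart of Lemma~\ref{lem:AutAKfibration}, namely $\tau_U(\alpha) = (\alpha(e_0),\, \mathrm{Ad}_{u_{\alpha(e_0)}} \circ \alpha)$ on $U = \{p \in \Proj{A \otimes \K}^\times : \lVert p - e_0 \rVert < \eta\}$ for any $\eta \le 1$, with inverse $\kappa_U(p,\gamma) = \mathrm{Ad}_{u_p^*} \circ \gamma$ and $\tau_U(\id{A \otimes \K}) = (e_0, \id{A \otimes \K})$.

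To define $v$ I would fix a dense sequence $(a_n)$ in the unit ball of $A \otimes \K$ and set $w(\alpha) = \sum_{n} 2^{-n}\lVert \alpha(a_n) - a_n \rVert$; this series converges uniformly, so $w$ is point--norm continuous, and $w(\alpha) = 0$ if and only if $\alpha = \id{A \otimes \K}$. Fixing $\eta = \tfrac14$, I set
$$ v(\alpha) = \min\Bigl(1,\ \max\bigl(\tfrac{1}{\eta}\lVert \alpha(e_0) - e_0 \rVert,\ \tfrac12 \min(1, w(\alpha))\bigr)\Bigr). $$
Since $\tfrac12\min(1,w(\alpha)) \le \tfrac12 < 1$, the set $\{v < 1\}$ equals $\Omega = \{\alpha : \lVert \alpha(e_0) - e_0 \rVert < \eta\} = \pi^{-1}(U)$, an open neighbourhood of the identity; and $v(\alpha) = 0$ forces $w(\alpha) = 0$, so $v^{-1}(0) = \{\id{A \otimes \K}\}$, as required.

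The retraction is then assembled on $\Omega \cong U \times \AutSt{A \otimes \K}{1 \otimes e}$ by contracting the two factors simultaneously. On the fibre I use the strong deformation retraction $G$ of $\AutSt{A \otimes \K}{1 \otimes e}$ to $\id{A \otimes \K}$ from Theorem~\ref{thm:stabContractible} (which fixes $\id{A \otimes \K}$ throughout). On the base I use the functional-calculus retraction $R(t,p) = g\bigl((1-t)p + t e_0\bigr)$, where $g$ is a fixed continuous function equal to $\chi_{(1/2,\infty)}$ on $(-\infty,\tfrac14]\cup[\tfrac34,\infty)$: for $\lVert p - e_0 \rVert < \tfrac14$ the self-adjoint element $(1-t)p + t e_0$ has spectrum in $[-\tfrac14,\tfrac14]\cup[\tfrac34,\tfrac54]$, so $R(t,p)$ is a projection depending continuously on $(t,p)$, lying in the same path component of $\Proj{A \otimes \K}$ as $p$ and hence in $\Proj{A \otimes \K}^\times$, with $R(0,p)=p$, $R(1,p)=e_0$ and $R(t,e_0)=e_0$. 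Transporting $(R,G)$ through $\kappa_U$ and $\tau_U$ gives $H(t,\alpha) = \kappa_U\bigl(R(t,\alpha(e_0)),\, G(t,\mathrm{Ad}_{u_{\alpha(e_0)}}\circ\alpha)\bigr)$, a deformation retraction of $\Omega$ onto $\id{A \otimes \K}$.

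The point to verify — and the step I expect to be the genuine obstacle — is that $H$ stays inside $\Omega$, equivalently that the base retraction does not leave $U$, i.e. $\lVert R(t,p) - e_0 \rVert \le \lVert p - e_0 \rVert$. A naive norm estimate only controls $\lVert R(t,p) - e_0 \rVert$ up to a constant factor $>1$ and would let the neighbourhood grow under the homotopy; the non-overshoot is instead a consequence of the two-projection (Halmos) structure of the pair $(e_0,p)$. Away from its degenerate part this pair is unitarily a $2\times 2$ matrix model over a commutative C*-algebra, and there $\lVert R(t,p) - e_0 \rVert$ is the sine of an angle that is monotonically non-increasing in $t$; since $(1-t)p + te_0$ respects the decomposition, the estimate passes to $A \otimes \K$. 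Granting this, $R$ maps $U \times I$ into $U$ and $G$ maps the fibre into itself, so $H$ retracts $\Omega$ within $\Omega$, completing the argument for $\Aut{A \otimes \K}$. The identical construction — using the endomorphism bundle of Lemma~\ref{lem:full-AutAKfibration}, the contractibility of $\EndSt{A \otimes \K}{1 \otimes e}$ from Theorem~\ref{thm:stabContractible}, and the same $v$, $R$ and $g$ — shows that $\Endo{A \otimes \K}^{\times}$ is well-pointed.
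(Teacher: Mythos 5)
Your proof is correct and follows essentially the same route as the paper's: the Str\o m-type criterion of Lemma~\ref{lem:NDRcontractible} applied to a function $v$ built from $\lVert \alpha(1\otimes e)-1\otimes e\rVert$ together with a distance to $\id{A\otimes\K}$, the local trivialization of $\pi^{-1}(U)$ from Lemma~\ref{lem:AutAKfibration}, contraction of the fibre by Theorem~\ref{thm:stabContractible}, and contraction of the base by functional calculus along $(1-t)p+t(1\otimes e)$. The one point where you go beyond the paper is the ``non-overshoot'' estimate $\lVert R(t,p)-e_0\rVert\le\lVert p-e_0\rVert$, which the paper does not address; it is indeed true (in the two-by-two Halmos model the relevant angle satisfies $\tan\phi_t=\frac{1+D-2t}{1+D}\,\tan\phi_0$ with $D=\sqrt{1-4t(1-t)\sin^2\phi_0}$, and this factor lies in $[0,1-t]$, so $\lvert\tan\phi_t\rvert\le\lvert\tan\phi_0\rvert$), but it can also be sidestepped entirely, since Str\o m's theorem only requires the halo to be deformable to the basepoint within the ambient space rather than within itself.
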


\begin{proof}
We will prove this for $\Aut{A \otimes \K}$, but the proof for $\Endo{A \otimes \K}^{\times}$ is entirely similar.
Let $e \in \K$ be a rank-$1$ projection and set $p_0=1\otimes e$. Let $U = \{ p \in \Proj{A \otimes \K}\ | \ \lVert p - p_0 \rVert < 1/2\}.$ If $\pi \colon \uAut{A \otimes \K} \to \uProj{A \otimes \K}$ denotes the map $\beta\mapsto \beta(p_0)$, we will show that $\pi^{-1}(U)$ deformation retracts to $\id{A \otimes \K} \in \uAut{A \otimes \K}$.
As we have seen in the proof of Lemma \ref{lem:AutAKfibration}, the principal bundle $\uAut{A \otimes \K} \to \uProj{A \otimes \K}$ trivializes over $U$, i.e.\ there exists a homeomorphism
\(
	\pi^{-1}(U) \to U \times \AutSt{A \otimes \K}{p_0}
\)
sending $\id{A \otimes \K}$ to $(p_0, \id{A \otimes \K})$. Thus, it suffices to show that the right hand side retracts. Let $\chi$ be the characteristic function of $(1/2,1]$. Then $h(p,t)=\chi((1-t)p+tp_0)$ is a  deformation retraction of $U$ into $p_0$. This is well-defined since $1/2$ is not in the spectrum of $a= (1-t)p+tp_0$ as seen from the estimate $\|(1-2a)-(1-2p_0)\|<1$. We have shown in Theorem \ref{thm:stabContractible} that $\AutSt{A \otimes \K}{p_0}$ deformation retracts to $\id{A \otimes \K}$.  Combining these homotopies we end up with a deformation retraction of $\pi^{-1}(U)$ into $\id{A \otimes \K}$.
Let $d$ be a metric for $\Aut{A \otimes \K}$.
Then
	$v\colon \Aut{A \otimes \K} \to [0,1]$,
	$v(\alpha) = \max\{\min\{d(\alpha,\id{A\otimes \K}),1/2\}, \min\{1, 2\lVert \alpha(p_0) - p_0 \rVert \}\}$ and $V:=\pi^{-1}(U)$
satisfy the conditions of Lemma \ref{lem:NDRcontractible} relative to the basepoint $\id{A \otimes \K}$.
\end{proof}

\section{The infinite loop space structure of $B\Aut{A \otimes \K}$}

\subsection{Permutative categories and infinite loop spaces}
We will show that $B\Aut{A \otimes \K}$ is an infinite loop space in the sense of the following definition \cite{book:Adams}.

\begin{definition}\label{def:infinite_loop_space}
A topological space $E = E_0$ is called an \emph{infinite loop space}, if there exists a sequence of spaces $E_i$, $i \in \N$, such that $E_i \simeq \Omega E_{i+1}$ for all $i \in \N_0$ ($\simeq$ denotes homotopy equivalence).
\end{definition}

The importance of these spaces lies in the fact, that they represent generalized cohomology theories, i.e.\ for a CW-complex $X$, the homotopy classes of maps $E^i(X) := [X,E_i]$ are abelian groups and the functor $X \mapsto E^{\bullet}(X)$ is a cohomology theory. {There may be many inequivalent delooping sequences starting with the same $E_0$ leading to different theories.} The sequence of spaces $E_i$ forms a \emph{connective $\Omega$-spectrum}. There is a well-developed theory to detect whether a space belongs to this class \cite{paper:May, paper:SegalCatAndCoh}. One of the main sources for infinite loop spaces are classifying spaces of topological strict symmetric monoidal categories, called \emph{permutative categories} in \cite{paper:MayPermutative}. 

A \emph{topological category} has a space of objects, a space of morphisms and continuous source, target and identity maps. Such a category $\mathcal{C}$ carries a \emph{strict monoidal structure} if it comes equipped with a functor $\otimes \colon \mathcal{C} \times \mathcal{C} \to \mathcal{C}$ that satisfies the analogues of associativity and unitality known for monoids. The strictness refers to the fact that these hold on the nose, not only up to natural transformations. $\mathcal{C}$ is called \emph{symmetric} if it comes equipped with a natural transformation $c \colon \otimes \circ \tau \to \otimes$, where $\tau \colon \mathcal{C} \times \mathcal{C} \to \mathcal{C} \times \mathcal{C}$ is switching the factors. This should behave like a permutation on $n$-fold tensor products. We will assume all permutative categories to be \emph{well-pointed} in the sense that the map ${\rm obj}(\mathcal{C}) \to {\rm mor}(\mathcal{C}), \ x \to \id{x}$ is a cofibration. For a precise definition, we refer the reader to \cite[Def.1]{paper:MayPermutative}. {Note further that all categories we consider in this paper will be \emph{small}.}

Any topological category $\mathcal{C}$ can be turned into a simplicial space $N_{\bullet}\mathcal{C}$ via the nerve construction. Let $N_0\,\mathcal{C} = {\rm obj}(\mathcal{C})$, $N_1\mathcal{C} = {\rm mor}(\mathcal{C})$ and
\[
	N_k\,\mathcal{C} = \{ (f_1, \dots, f_k) \in {\rm mor}(\mathcal{C}) \times \dots \times {\rm mor}(\mathcal{C})\ |\ s(f_i) = t(f_{i+1})\}\ .
\]
The face maps $d_i^k \colon N_k\,\mathcal{C} \to N_{k-1}\,\mathcal{C}$ and degeneracies $s_i^k\colon N_k\,\mathcal{C} \to N_{k+1}\,\mathcal{C}$ are induced by composition of successive maps and insertion of identities respectively. The geometric realization of a simplicial space $X_{\bullet}$ is defined by
\[
	\lvert X_{\bullet} \rvert = \left(\coprod_{k = 0} \,X_k \times \Delta_k\right) / \sim
\]
where $\Delta_k \subset \R^{k+1}$ denotes the standard $k$-simplex and the equivalence relation is generated by $(d_i^kx, u) \sim (x, \partial_i^{k-1}u)$ and $(s_i^ly, v) \sim (y, \sigma_i^{l+1}v)$ for $x \in X_k$, $u \in \Delta_{k-1}$, $y \in X_l$, $v \in X_{l+1}$, where $\delta_i$ and $\sigma_i$ are the coface and codegeneracy maps on the standard simplex. For details about this construction we refer the reader to \cite[sec.11]{paper:May}.

The space $\lvert N_{\bullet} \mathcal{C} \rvert$ associated to a category $\mathcal{C}$ is called the \emph{classifying space of $\mathcal{C}$}. If $\mathcal{C}$ is the category associated to a monoid $M$, then we denote $\lvert N_{\bullet} \mathcal{C} \rvert$ by $BM$. Having a monoidal structure on $\mathcal{C}$ yields the following.

\begin{lemma} \label{lem:top_monoid}
Let $\mathcal{C}$ be a strict monoidal topological category. Then $\lvert N_{\bullet} \mathcal{C} \rvert$ is a topological monoid.
\end{lemma}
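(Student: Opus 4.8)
The plan is to construct a tensor product multiplication on $|N_\bullet \mathcal{C}|$ directly from the strict monoidal functor $\otimes \colon \mathcal{C} \times \mathcal{C} \to \mathcal{C}$, and to verify that it makes $|N_\bullet \mathcal{C}|$ into a topological monoid. The central observation is that the nerve construction is functorial and, more importantly, compatible with products: for any two topological categories $\mathcal{C}$ and $\mathcal{D}$ there is a natural homeomorphism $|N_\bullet(\mathcal{C} \times \mathcal{D})| \cong |N_\bullet \mathcal{C}| \times |N_\bullet \mathcal{D}|$. This is where the real content sits, since geometric realization of simplicial spaces does not commute with products in general; it does, however, commute with \emph{finite} products provided one works in a convenient category of spaces (compactly generated weak Hausdorff spaces), which is exactly the setting flagged earlier in the paper when it was noted that the point-norm topology on $\Aut{A \otimes \K}$ is compactly generated.

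First I would recall that $N_\bullet$ sends a product of categories to the diagonal (level-wise) product of simplicial spaces, i.e.\ $N_k(\mathcal{C} \times \mathcal{D}) = N_k\mathcal{C} \times N_k\mathcal{D}$, because objects, morphisms, source/target and identity maps of $\mathcal{C} \times \mathcal{D}$ are all formed coordinate-wise. Then I would invoke the standard fact that for simplicial spaces $X_\bullet$ and $Y_\bullet$ the canonical map $|X_\bullet \times Y_\bullet| \to |X_\bullet| \times |Y_\bullet|$ is a homeomorphism when products are taken in compactly generated spaces; this is precisely the product theorem for geometric realization recorded in \cite[sec.11]{paper:May}. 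Composing, we get the desired homeomorphism $|N_\bullet(\mathcal{C} \times \mathcal{C})| \cong |N_\bullet \mathcal{C}| \times |N_\bullet \mathcal{C}|$.

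With this identification in hand, the multiplication is defined as the composite
\[
	|N_\bullet \mathcal{C}| \times |N_\bullet \mathcal{C}| \cong |N_\bullet(\mathcal{C} \times \mathcal{C})| \xrightarrow{\ |N_\bullet \otimes|\ } |N_\bullet \mathcal{C}|,
\]
where $|N_\bullet \otimes|$ is the geometric realization of the simplicial map induced by the monoidal functor $\otimes$. The unit is the image of the monoidal unit object of $\mathcal{C}$ under the inclusion $N_0\mathcal{C} = \obj(\mathcal{C}) \hookrightarrow |N_\bullet \mathcal{C}|$. To check that these data define a topological monoid I would verify associativity and unitality. Both follow from the strictness of the monoidal structure on $\mathcal{C}$: strict associativity $\otimes \circ (\otimes \times \id{}) = \otimes \circ (\id{} \times \otimes)$ and strict unitality pass through $N_\bullet$ as equalities of simplicial maps, and then through $|{-}|$ as equalities of continuous maps, using naturality of the product homeomorphism to match up the two bracketings. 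Continuity of the multiplication is immediate, being a composite of a homeomorphism with a realization of a continuous simplicial map.

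The main obstacle is the product theorem for geometric realization, the homeomorphism $|X_\bullet \times Y_\bullet| \cong |X_\bullet| \times |Y_\bullet|$, which is genuinely false in the category of all topological spaces and only holds after passing to compactly generated spaces. I would therefore be careful to state at the outset that all constructions take place in compactly generated (weak Hausdorff) spaces, and to point out that this hypothesis is harmless here precisely because $\obj(\mathcal{C})$ and $\mor(\mathcal{C})$ are compactly generated in the cases of interest, as already observed for $\Aut{A \otimes \K}$. Everything else---the level-wise product formula for nerves and the formal verification of the monoid axioms---is routine diagram-chasing that follows from the strictness built into the definition of a strict monoidal category.
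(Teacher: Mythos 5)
Your proposal is correct and follows essentially the same route as the paper: the paper likewise observes that the nerve takes the product category to the levelwise product of simplicial spaces, so that $N_{\bullet}\mathcal{C}$ is a simplicial topological monoid, and then cites \cite[Cor.11.7]{paper:May} — whose proof is exactly the product theorem for geometric realization in compactly generated spaces that you invoke and unfold by hand. Your additional care about the compactly generated hypothesis and the explicit verification of associativity and unitality from strictness is simply the content of that citation made explicit.
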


\begin{proof}\label{pf:top_monoid}
The nerve construction $N_{\bullet}$ preserves products in the sense that the projection functors $\pi_i \colon \mathcal{C} \times \mathcal{C} \to \mathcal{C}$ induce a levelwise homeomorphism $N_{\bullet}(\mathcal{C} \times \mathcal{C}) \to N_{\bullet}\mathcal{C} \times N_{\bullet} \mathcal{C}$. Therefore $N_{\bullet}\mathcal{C}$ is a simplicial topological monoid and the lemma follows from \cite[Cor.11.7]{paper:May}.
\end{proof}

A permutative category $\mathcal{C}$ provides an input for infinite loop space machines \cite[Def.2]{paper:MayPermutative}. Due to the above lemma, there is a classifying space $B\lvert N_{\bullet}\mathcal{C} \rvert$. The following has been proven by Segal \cite{paper:SegalCatAndCoh} and May \cite[Thm.4.10]{paper:MayGroupCompletion}

\begin{theorem} \label{thm:InfLoopSpace}
Let $\mathcal{C}$ be a permutative category. Then $\Omega B\lvert N_{\bullet} \mathcal{C}\rvert$ is an infinite loop space. Moreover, if $\pi_0(\lvert N_{\bullet} \mathcal{C} \rvert)$ is a group, then the map $\lvert N_{\bullet}\mathcal{C} \rvert \to \Omega B\lvert N_{\bullet} \mathcal{C} \rvert$ induced by the inclusion of the $1$-skeleton $S^1 \times \lvert N_{\bullet}\mathcal{C}\rvert \to B\lvert N_{\bullet} \mathcal{C} \rvert$ is a homotopy equivalence of $H$-spaces.
\end{theorem}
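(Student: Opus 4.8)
The plan is to read Theorem~\ref{thm:InfLoopSpace} not as something to be proved from first principles but as the combination of two classical machines applied to the permutative category $\mathcal{C}$: an infinite loop space machine (Segal's $\Gamma$-spaces or equivalently May's $E_\infty$-operad formalism) to produce the deloopings, and the group completion theorem to identify the zeroth space. Write $M = \lvert N_{\bullet}\mathcal{C}\rvert$; by Lemma~\ref{lem:top_monoid} this is a topological monoid, and the symmetric monoidal data $(\otimes, c)$ make its multiplication homotopy-commutative together with all higher coherences, so that $M$ is an $E_\infty$-space. The first concrete step I would take is to promote $\mathcal{C}$ to a Segal $\Gamma$-category $\mathbf{n}\mapsto \mathcal{C}(\mathbf{n})$ with $\mathcal{C}(\mathbf{1})=\mathcal{C}$, whose Segal maps $\mathcal{C}(\mathbf{n})\to \mathcal{C}(\mathbf{1})^{\times n}$ are equivalences precisely because $\otimes$ is symmetric monoidal; applying nerve and realization levelwise yields a \emph{special} $\Gamma$-space $\mathcal{A}$ with $\mathcal{A}(\mathbf{1})=M$.

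Next I would extract the spectrum. Prolonging $\mathcal{A}$ along spheres produces spaces $E_i := \mathcal{A}(S^i)$ with structure maps $E_i \to \Omega E_{i+1}$, where $E_1 = \mathcal{A}(S^1)\simeq B M$. The specialness of $\mathcal{A}$ — which is exactly the coherent commutativity encoded by the symmetry $c$ — guarantees that these structure maps are weak equivalences for $i\geq 1$. Setting $E_0 := \Omega E_1 = \Omega B M$ then exhibits $\{E_i\}_{i\geq 0}$ as a connective $\Omega$-spectrum, so that $E_0 = \Omega B\lvert N_{\bullet}\mathcal{C}\rvert$ is an infinite loop space in the sense of Definition~\ref{def:infinite_loop_space}. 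This settles the first assertion and requires essentially only the bookkeeping of Segal's and May's constructions.

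For the second assertion I would first identify the map in question. The $1$-skeleton inclusion $M\times \Delta^1 \to B M$ descends, after collapsing the degenerate simplex and the two faces, to $S^1\times M \to B M$; its adjoint is the canonical map $\iota\colon M \to \Omega B M$, which is an $H$-map for the multiplications induced by $\otimes$. The group completion theorem of McDuff--Segal and May \cite{paper:MayGroupCompletion} asserts that $\iota$ induces an isomorphism $H_*(M)[\pi_0(M)^{-1}] \xrightarrow{\ \cong\ } H_*(\Omega B M)$. The observation that unlocks the hypothesis is that when $\pi_0(M)$ is already a group the localization at $\pi_0(M)$ changes nothing, so $\iota$ becomes a homology isomorphism outright.

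The final step — and the one I expect to be the main obstacle — is promoting this homology isomorphism to a genuine homotopy equivalence, since a homology isomorphism is in general far from a weak equivalence. The remedy is grouplikeness: with $\pi_0(M)$ a group, both $M$ and $\Omega B M$ are grouplike $H$-spaces, all of whose path components are homotopy equivalent by translation and each of which is a simple (indeed nilpotent) space, so that a homology isomorphism between them is automatically a weak homotopy equivalence. Since $M$ is the realization of a well-pointed simplicial space and hence has the homotopy type of a CW complex, as does $\Omega B M$, Whitehead's theorem upgrades the weak equivalence to a homotopy equivalence, which is an $H$-map by construction. This completes the plan.
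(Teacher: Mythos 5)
Your proposal is correct, but note that the paper offers no proof of this statement at all: it is quoted verbatim as a theorem of Segal \cite{paper:SegalCatAndCoh} and May \cite[Thm.4.10]{paper:MayGroupCompletion}, and your argument is precisely an unpacking of those two references --- the special $\Gamma$-space obtained from the permutative structure to produce the deloopings, followed by the group completion theorem (trivialized by the hypothesis that $\pi_0$ is already a group) and the homology Whitehead theorem for grouplike $H$-spaces to identify $M$ with $\Omega BM$. So you have reconstructed the standard proof that the authors delegate to the literature, and there is nothing to compare beyond that.
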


\subsection{The tensor product of $A \otimes \K$-bundles}
Let $A$ be a  strongly self-absorbing $C^*$-algebra, $X$ be a topological space and let $P_1$ and $P_2$ be principal $\Aut{A \otimes \K}$-bundles over $X$. Fix an isomorphism
$\psi \colon A \otimes \K \to (A \otimes \K) \otimes (A \otimes \K)$.
This choice induces a tensor product operation on principal $\Aut{A \otimes \K}$-bundles in the following way. Note that $P_1 \times_X P_2 \to X$ is a principal $\Aut{A \otimes \K} \times \Aut{A \otimes \K}$-bundle and $\psi$ induces a group homomorphism
\[
	{\rm Ad}_{\psi^{-1}} \colon \Aut{A \otimes \K} \times \Aut{A \otimes \K} \to \Aut{A \otimes \K} \quad ; \quad (\alpha,\beta) \mapsto \psi^{-1} \circ (\alpha \otimes \beta) \circ \psi\ .
\]
Now let
\[
	P_1 \otimes_{\psi} P_2 := (P_1 \times_X P_2) \times_{{\rm Ad}_{\psi^{-1}}} \Aut{A \otimes \K} = ((P_1 \times_X P_2) \times \Aut{A \otimes \K})/\sim
\]
where the equivalence relation is $(p_1\,\alpha, p_2\,\beta, \gamma) \sim (p_1, p_2, {\rm Ad}_{\psi^{-1}}(\alpha,\beta)\,\gamma)$ for all $(p_1, p_2) \in P_1 \times_X P_2$ and $\alpha, \beta, \gamma \in \Aut{A \otimes \K}$. This is a delooped version of the operation $\ast$ from Lemma \ref{lem:EckmannHilton1}.

Due to the choice of $\psi$, which was arbitrary, $\otimes_{\psi}$ can not be associative. We will show, however, that  -- just like $\ast$ -- it is \textit{homotopy associative} and also \textit{homotopy unital}.

To obtain a model for the classifying space $B\Aut{A \otimes \K}$, let $\mathcal{B}$ be the topological category, which has as its object space just a single point and the group $\Aut{A \otimes \K}$ as its morphism space. Since we have shown that $\Aut{A \otimes \K}$ is well-pointed (see Proposition \ref{prop:wellpointed}), \cite[Prop.7.5 and Thm.8.2]{book:MayClassifying} implies that the geometric realization $\lvert N_{\bullet}\mathcal{B} \rvert$ has in fact the homotopy type of a classifying space for principal $\Aut{A \otimes \K}$-bundles, i.e.\
\[
	B\Aut{A \otimes \K} = \lvert N\mathcal{B}_{\bullet} \rvert\ .
\]
Choosing an isomorphism $\psi \colon A \otimes \K \to (A \otimes \K) \otimes (A \otimes \K)$, we can define a functor $\otimes_{\psi} \colon \mathcal{B} \times \mathcal{B} \to \mathcal{B}$ just as above, which acts on morphisms $\alpha, \beta \in \Aut{A \otimes \K}$ by
\[
 	(\alpha, \beta) \mapsto \alpha \otimes_{\psi} \beta := {\rm Ad}_{\psi^{-1}}(\alpha, \beta)\ .
\]
This is in fact functorial since composition is well-behaved with respect to the tensor product in the following way 	
\begin{align*}\label{eqn:EHcondition}
	& (\alpha \circ \alpha') \otimes_{\psi} (\beta \circ \beta') = \psi^{-1} \circ (\alpha \circ \alpha') \otimes (\beta \circ \beta') \circ \psi \\  = &\  \psi^{-1} \circ (\alpha \otimes \beta) \circ \psi \circ \psi^{-1} \circ (\alpha' \otimes \beta') \circ \psi = (\alpha \otimes_{\psi} \beta) \circ (\alpha' \otimes_{\psi} \beta')\ .
\end{align*}
The functor induces a multiplication map on the geometric realization
\[
	\mu_{\psi} \colon B\Aut{A \otimes \K} \times B\Aut{A \otimes \K} \to B\Aut{A \otimes \K}\ .
\]
Observe that a path connecting two isomorphisms $\psi, \psi' \in {\rm Iso}(\Aut{A \otimes \K}, \Aut{A \otimes \K}^{\otimes 2})$ induces a homotopy of functors
$\mathcal{B} \times \mathcal{B} \times I \to \mathcal{B}$,
where $I$ here is the category, which has $[0,1]$ as its object space and only identities as morphisms. After geometric realization this in turn yields a homotopy between $\mu_{\psi}$ and $\mu_{\psi'}$ (observe that $\lvert I \rvert \cong [0,1]$).

\begin{lemma}\label{lem:HSpace}
Let $A$ be a  strongly self-absorbing $C^*$-algebra and let $\mathcal{B}$ be the category defined above. Let $\psi \colon A \otimes \K \to (A \otimes \K) \otimes (A \otimes \K)$ be an isomorphism, then $\mu_{\psi}$ defines an $H$-space structure on $B\Aut{A \otimes \K}$, which has the basepoint of $B\Aut{A \otimes \K}$ as a homotopy unit and agrees with the $H$-space structure induced by the tensor product $\otimes_{\psi}$ of $A \otimes \K$-bundles. Different choices of $\psi$ yield homotopy equivalent $H$-space structures.
\end{lemma}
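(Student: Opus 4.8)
The plan is to verify the three assertions of the lemma in sequence: (1) that $\mu_\psi$ is an $H$-space structure with homotopy unit the basepoint, (2) that it agrees with the bundle-level tensor product $\otimes_\psi$, and (3) that different choices of $\psi$ give homotopy equivalent structures. The key technical input has already been assembled in the text preceding the statement, so most of the work is bookkeeping at the level of the nerve.

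First I would establish that $\mu_\psi$ makes $B\Aut{A\otimes\K}$ an $H$-space. The functor $\otimes_\psi\colon \mathcal{B}\times\mathcal{B}\to\mathcal{B}$ is already verified to be a genuine functor (the Eckmann-Hilton type computation displayed just above the statement shows compatibility with composition), and the nerve construction $N_\bullet$ is product-preserving by the argument used in Lemma~\ref{lem:top_monoid}; hence after geometric realization we obtain a continuous multiplication $\mu_\psi$. To identify a homotopy unit, I would restrict $\otimes_\psi$ to the subcategories $\mathcal{B}\times\{*\}$ and $\{*\}\times\mathcal{B}$ and compare the resulting functors $\alpha\mapsto \psi^{-1}\circ(\alpha\otimes\id{A\otimes\K})\circ\psi$ and $\alpha\mapsto\psi^{-1}\circ(\id{A\otimes\K}\otimes\alpha)\circ\psi$ with the identity functor on $\mathcal{B}$. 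Exactly here I would reuse the homotopies constructed in the proof of Lemma~\ref{lem:EckmannHilton1}: writing $\psi=\hat\psi(\tfrac12)\circ\kappa$, the paths $\hat\psi(t/2)$ and $\hat\psi((t+1)/2)$ give homotopies of functors $\mathcal{B}\times I\to\mathcal{B}$ from the restricted multiplication to the identity (up to the automorphism $\kappa$, which is absorbed by homotopy commutativity of $\circ$). A homotopy of functors realizes to a homotopy after applying $\lvert N_\bullet(-)\rvert$ — using the device, noted in the paragraph before the statement, that $I$ is viewed as the category with object space $[0,1]$ and only identities, so that $\mathcal{B}\times\mathcal{B}\times I\to\mathcal{B}$ realizes to a map $B\Aut{A\otimes\K}\times B\Aut{A\otimes\K}\times[0,1]\to B\Aut{A\otimes\K}$. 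This shows the basepoint is both a left and a right homotopy unit.

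Second, to show $\mu_\psi$ agrees with the bundle-theoretic $\otimes_\psi$, I would invoke the standard identification of $\lvert N_\bullet\mathcal{B}\rvert$ with the classifying space for principal $\Aut{A\otimes\K}$-bundles, already established via \cite[Prop.7.5 and Thm.8.2]{book:MayClassifying} using well-pointedness from Proposition~\ref{prop:wellpointed}. Under this identification, the group homomorphism ${\rm Ad}_{\psi^{-1}}\colon \Aut{A\otimes\K}\times\Aut{A\otimes\K}\to\Aut{A\otimes\K}$ classifies, on the one hand, the functor-induced map $\mu_\psi$, and on the other hand the associated-bundle construction $P_1\otimes_\psi P_2=(P_1\times_X P_2)\times_{{\rm Ad}_{\psi^{-1}}}\Aut{A\otimes\K}$; indeed, the external product of the two universal bundles is classified by the product map into $B(\Aut{A\otimes\K}\times\Aut{A\otimes\K})=B\Aut{A\otimes\K}\times B\Aut{A\otimes\K}$, and pushing forward along $B({\rm Ad}_{\psi^{-1}})$ corresponds precisely to forming $\otimes_\psi$. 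Naturality of the bundle-classifying correspondence then gives that the two $H$-space structures coincide up to the canonical homotopy equivalence.

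Finally, for independence of $\psi$, I would use the observation recorded immediately before the statement: the space ${\rm Iso}(\Aut{A\otimes\K},\Aut{A\otimes\K}^{\otimes2})$ of admissible isomorphisms is path connected (any two isomorphisms differ by an automorphism, and $\Aut{A\otimes\K}$ has the path component of the identity as an open-and-closed subgroup with the relevant elements lying in a common component after the Eckmann-Hilton adjustment), so a path from $\psi$ to $\psi'$ induces a homotopy of functors $\mathcal{B}\times\mathcal{B}\times I\to\mathcal{B}$ and hence, after realization, a homotopy between $\mu_\psi$ and $\mu_{\psi'}$. I expect the main obstacle to be the careful verification that a homotopy \emph{of functors} (as opposed to a mere levelwise homotopy of simplicial maps) realizes to an honest homotopy of $H$-space structures while respecting the unit — that is, checking that all the simplicial identities and the interaction with the unit object are preserved coherently through geometric realization. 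The individual homotopies are supplied by Lemma~\ref{lem:EckmannHilton1}, so the crux is organizing them into a single functorial homotopy rather than producing any genuinely new analytic estimate.
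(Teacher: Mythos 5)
Your handling of the homotopy unit and of the identification of $\mu_{\psi}$ with the bundle-level tensor product matches the paper's proof: the paper likewise lifts the homotopies of Lemma~\ref{lem:EckmannHilton1} to homotopies of \emph{functors} via the interval category $I$, and computes $\mu_{\psi}^*EG\cong\pi_1^*EG\otimes_{\psi}\pi_2^*EG$ from the simplicial model of $EG\to BG$. (You do skip homotopy associativity, which the paper establishes from Theorem~\ref{thm:stabContractible} by connecting $(\psi\otimes\id{A\otimes\K})\circ\psi$ to $(\id{A\otimes\K}\otimes\psi)\circ\psi$, both of which send $1\otimes e$ to $(1\otimes e)^{\otimes 3}$; this is announced in the surrounding text, so it is a minor omission.)

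The genuine gap is in your argument that different choices of $\psi$ give homotopic multiplications. You claim that ${\rm Iso}(A\otimes\K,(A\otimes\K)^{\otimes 2})$ is path connected and propose to run a path from $\psi$ to $\psi'$. This space is \emph{not} path connected in general: fixing $\psi_0$, the map $\kappa\mapsto\psi_0\circ\kappa$ identifies it with $\Aut{A\otimes\K}$, whose $\pi_0$ is $K_0(A)^{\times}_{+}$ (Theorem~\ref{thm:homotopy-groups}); concretely, $[\psi(1\otimes e)]\in K_0(A)^{\times}_{+}$ is a locally constant invariant which, by Lemma~\ref{lemma:p-tensor-q}, takes every value in $K_0(A)^{\times}_{+}$ as $\psi$ varies. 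For $A=\QQ$ or $A=\OO_\infty$ there are therefore infinitely many, respectively two, path components, and no ``Eckmann--Hilton adjustment'' moves $\kappa=\psi^{-1}\circ\psi'$ into the identity component --- the Eckmann--Hilton argument identifies two products on homotopy classes, it does not connect automorphisms by paths. The paper's remark about paths of isomorphisms is only an auxiliary observation; its actual proof writes $\psi=\hat{\psi}(\tfrac12)\circ\kappa$ and, setting $\gamma=\alpha\otimes_{\hat{\psi}}\beta$, runs the chain of homotopies of functors
\[
\alpha\otimes_{\psi}\beta=\kappa^{-1}\circ\gamma\circ\kappa\;\sim\;(\kappa\otimes_{\hat{\psi}}\id{A\otimes\K})^{-1}\circ(\id{A\otimes\K}\otimes_{\hat{\psi}}\gamma)\circ(\kappa\otimes_{\hat{\psi}}\id{A\otimes\K})\;=\;\id{A\otimes\K}\otimes_{\hat{\psi}}\gamma\;\sim\;\gamma\ ,
\]
where the middle equality is the interchange law for the functor $\otimes_{\hat{\psi}}$ and the two $\sim$'s come from the unit homotopies already constructed; this works for $\kappa$ in an arbitrary component. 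You should replace the connectedness claim with this conjugation argument.
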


\begin{proof}
The proof of this statement is very similar to the one of Lemma \ref{lem:EckmannHilton1}, but we have to take care that the homotopies we use run through \emph{functors} on $\mathcal{B}$. Let $\hat{\psi}$, $\hat{l}$ and $\hat{r}$ be just as in Theorem~\ref{thm:stabContractible} and consider $\psi = \hat{\psi}(\frac{1}{2})$ first. By Theorem \ref{thm:stabContractible}, there is a path between $(\psi \otimes \id{A \otimes \K}) \circ \psi$ and $(\id{A \otimes \K} \otimes \psi) \circ \psi$, since both these morphisms map $1\otimes e$ to $(1\otimes e)^{\otimes 3}$. This proves the homotopy associativity in this case.

To prove that the basepoint provides a homotopy unit we have to show that the two functors $\alpha \mapsto \alpha \otimes_{\psi} \id{A \otimes \K}$ and $\alpha \mapsto \id{A \otimes \K} \otimes_{\psi} \alpha$ are both homotopic to the identity functor. The argument for this is the same as in the proof of Lemma \ref{lem:EckmannHilton1}.

Now let $\psi \colon A \otimes \K \to (A \otimes \K) \otimes (A \otimes \K)$ be an arbitrary isomorphism. As in Lemma~\ref{lem:EckmannHilton1} we have $\psi = \hat{\psi} \circ \kappa$ for some automorphism $\kappa \in \Aut{A \otimes \K}$. If we denote homotopic functors by $\sim$, we have
\begin{align*}
	\alpha \otimes_{\psi} \beta & = \kappa^{-1} \circ (\alpha \otimes_{\hat{\psi}} \beta) \circ \kappa \sim (\kappa \otimes_{\hat{\psi}} \id{A \otimes \K})^{-1} \circ (\id{A \otimes \K} \otimes_{\hat{\psi}} (\alpha \otimes_{\hat{\psi}} \beta)) \circ (\kappa \otimes_{\hat{\psi}} \id{A \otimes \K}) \\
	& = \id{A \otimes \K} \otimes_{\hat{\psi}} (\alpha \otimes_{\hat{\psi}} \beta) \sim (\alpha \otimes_{\hat{\psi}} \beta)\ .
\end{align*}
Note that every stage of this homotopy provides functors $\mathcal{B} \times \mathcal{B} \to \mathcal{B}$. Geometrically realizing this homotopy we see that different choices of $\psi$ yield the same $H$-space structure up to homotopy.

Let $EG \to BG$ be the universal $G$-bundle where $G=\Aut{A \otimes \K}$ \cite[section 7]{book:MayClassifying}. Using its simplicial description, we see that $\mu_{\psi}^*EG \cong \pi_1^*EG \otimes_{\psi} \pi_2^*EG$, where $\pi_i \colon BG \times BG \to BG$ are the canonical projections. Now, given two classifying maps $f_k \colon X \to BG$ and the diagonal map $\Delta \colon BG \to BG \times BG$, we have
\begin{align*}
	(\mu_{\psi} \circ (f_1,f_2) \circ \Delta)^*EG & = \Delta^* \circ (f_1^*, f_2^*) \circ \mu_{\psi}^*EG = f_1^*EG \otimes_{\psi} f_2^*EG
\end{align*}
proving that the multiplication induced by the $H$-space structure on $[X, B\Aut{A \otimes \K}]$ agrees with the tensor product $\otimes_{\psi}$.
\end{proof}

\begin{definition} \label{def:BunXAutAK}
For a strongly self-absorbing $C^*$-algebra $A$ we define $(\BunX{A \otimes \K}, \otimes)$ to be the monoid of isomorphism classes of principal $\Aut{A \otimes \K}$-bundles with respect to the tensor product induced by $\otimes_{\psi}$. By the above lemma, this is independent of the choice of $\psi$.
\end{definition}
	
To apply the infinite loop space machine, we need a permutative category encoding the operation $\otimes_{\psi}$. Let $\mathcal{B}_{\otimes}$ be the category, which has $\N_0 = \{0,1,2, \dots\}$ as its object space (where $n \in \N_0$ should be thought of as~$(A \otimes \K)^{\otimes n}$ with $(A \otimes \K)^{\otimes 0} = \C$). The morphisms from $n$ to $m$ are given by
\(
	\hom_{\mathcal{B}_{\otimes}}(n,m) = \left\{ \alpha \in \mathrm{Hom}((A \otimes \K)^{\otimes n}, (A \otimes \K)^{\otimes m})\ |\ KK(\alpha)\ {\rm invertible}\right\}
\) for $n\geq 1$
  and  $\hom_{\mathcal{B}_{\otimes}}(0,m) = \left\{ \alpha \in \mathrm{Hom}(\C, (A \otimes \K)^{\otimes m})\ |\, \,[\alpha(1)]\in K_0( (A \otimes \K)^{\otimes m})^\times\right\}$ for $n=0$.
We equip these spaces with the point-norm topology. The ordinary tensor product of $*$-homomorphisms induces a \emph{strict} monoidal structure $\otimes \colon \mathcal{B}_{\otimes} \times \mathcal{B}_{\otimes}  \to \mathcal{B}_{\otimes}$, where $n \otimes m = n + m$.
Likewise, we have a symmetry $c_{n,m}$ on $\mathcal{B}_{\otimes}$, where $c_{n,m} \in \Aut{(A \otimes \K)^{\otimes (n+m)}}$ is the automorphism $(A \otimes \K)^{\otimes n} \otimes (A \otimes \K)^{\otimes m} \to (A \otimes \K)^{\otimes m} \otimes (A \otimes \K)^{\otimes n}$ switching the two factors. With these definitions $\mathcal{B}_{\otimes}$ becomes a permutative category. Define
\[
	B\Endo{A \otimes \K}^{\times}_{\otimes}: = \lvert N_{\bullet} \mathcal{B}_{\otimes} \rvert\ .
\]

\begin{lemma}\label{lem:B_Btensor_equiv}
The inclusion functor $J:\mathcal{B} \to \mathcal{B}_{\otimes}$ induces a homotopy equivalence of the corresponding classifying spaces $B\Aut{A \otimes \K} \to B\Endo{A \otimes \K}^{\times}_{\otimes}$. 	 Given an isomorphism $\psi \colon A \otimes \K \to (A \otimes \K)^{\otimes 2}$, the diagram
	\begin{equation}\label{eqn:multdiagram}
		\xymatrix{
			\mathcal{B} \times \mathcal{B} \ar[r]^-{\otimes_{\psi}} \ar[d]_-{J \times J} & \mathcal{B} \ar[d]^-{J} \\
			\mathcal{B}_{\otimes} \times \mathcal{B}_{\otimes} \ar[r]^-{\otimes} & \mathcal{B}_{\otimes}
		}
	\end{equation}
	commutes up to a natural transformation. In particular, the $H$-space structure of $B\Aut{A \otimes \K}$ agrees with the one on $B\Endo{A \otimes \K}_{\otimes}^{\times}$ up to homotopy.
\end{lemma}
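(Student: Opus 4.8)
The plan is to factor $BJ$ as $B\mathcal{B}\to B\langle 1\rangle\xrightarrow{B\iota} B\mathcal{B}_{\otimes}$, where $\langle 1\rangle\subset\mathcal{B}_{\otimes}$ is the full subcategory on the single object $1$ and $\iota$ is its inclusion, while the first functor is induced by $\Aut{A\otimes\K}\hookrightarrow\hom_{\mathcal{B}_{\otimes}}(1,1)$. Both $\mathcal{B}$ and $\langle 1\rangle$ are one-object topological categories, i.e.\ topological monoids, and the first functor is simply the inclusion of $\Aut{A\otimes\K}$ into the endomorphism monoid $\hom_{\mathcal{B}_{\otimes}}(1,1)$, which is a union of path components of $\Endo{A\otimes\K}$. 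By Corollary~\ref{cor:AutAK_BUA} this inclusion is a homotopy equivalence, and since all monoids in sight are well-pointed (Proposition~\ref{prop:wellpointed}) the induced map of bar constructions is a levelwise equivalence of proper simplicial spaces, so the realization lemma yields a homotopy equivalence $B\mathcal{B}\xrightarrow{\simeq} B\langle 1\rangle$.

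For the essential step I would show that $B\iota$ is a homotopy equivalence by a cofinality argument, namely the topological analogue of Quillen's Theorem~A: it suffices that the over-category $n\downarrow\iota$ have contractible classifying space for each object $n$ of $\mathcal{B}_{\otimes}$. For $n=1$ the morphism $\id{A\otimes\K}$ is an initial object -- the unique morphism from it to $f$ is $f$ itself -- so $B(1\downarrow\iota)\simeq\ast$; and since $(A\otimes\K)^{\otimes n}\cong A\otimes\K$ for every $n\ge 1$, a choice of $*$-isomorphism identifies $n\downarrow\iota$ with $1\downarrow\iota$ and disposes of all $n\ge 1$.

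The only genuinely new case is the monoidal unit $n=0$, which I expect to be the main obstacle. The objects of $0\downarrow\iota$ are the $*$-homomorphisms $\C\to A\otimes\K$ lying in $\hom_{\mathcal{B}_{\otimes}}(0,1)$, i.e.\ the projections in $\Proj{A\otimes\K}^{\times}$, and a morphism is an endomorphism $\phi$ with $\phi(p)=p'$; hence $N_{\bullet}(0\downarrow\iota)$ is the bar construction computing the homotopy orbit space of the evaluation action of $\hom_{\mathcal{B}_{\otimes}}(1,1)$ on $\Proj{A\otimes\K}^{\times}$. Using that $\Aut{A\otimes\K}\hookrightarrow\hom_{\mathcal{B}_{\otimes}}(1,1)$ is an equivalence I may replace the acting monoid by the group $\Aut{A\otimes\K}$, and Lemma~\ref{lemma:p-tensor-q} together with Lemma~\ref{lem:full-AutAKfibration} exhibits $\Proj{A\otimes\K}^{\times}$ as the homogeneous space $\Aut{A\otimes\K}/\AutSt{A\otimes\K}{1\otimes e}$. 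The homotopy orbit space of such a transitive action is a model for the classifying space of the stabilizer, so $B(0\downarrow\iota)\simeq B\AutSt{A\otimes\K}{1\otimes e}$, which is contractible because $\AutSt{A\otimes\K}{1\otimes e}$ is contractible by Theorem~\ref{thm:stabContractible}. Thus Theorem~A applies and $B\iota$, hence $BJ$, is a homotopy equivalence.

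For the square and the final sentence I would check that the fixed isomorphism $\psi$, viewed as a morphism $1\to 2$ in $\mathcal{B}_{\otimes}$, is a natural transformation from $J\circ\otimes_{\psi}$ to $\otimes\circ(J\times J)$: naturality at $(\alpha,\beta)$ is exactly the identity $\psi\circ\bigl(\psi^{-1}\circ(\alpha\otimes\beta)\circ\psi\bigr)=(\alpha\otimes\beta)\circ\psi$. Geometric realization turns this natural transformation into a homotopy, so the square of classifying spaces commutes up to homotopy. Since the realizations of $\otimes_{\psi}$ and $\otimes$ are the two $H$-space multiplications (Lemma~\ref{lem:HSpace} and the permutative structure of $\mathcal{B}_{\otimes}$) and $BJ$ is a homotopy equivalence, it follows that $BJ$ identifies the two $H$-space structures up to homotopy, as claimed.
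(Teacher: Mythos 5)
Your route is genuinely different from the paper's. The paper factors $J$ through a chain of explicit auxiliary categories $\mathcal{B} \to \mathcal{E} \to \mathcal{H} \to \mathcal{B}^1_{\otimes} \to \mathcal{B}_{\otimes}$ (where $\mathcal{B}^1_{\otimes}$ is the full subcategory on the objects $0$ \emph{and} $1$, and $\mathcal{H}$ is a two-object category interpolating between $\Endo{A\otimes\K}^{\times}$ and $\hom_{\mathcal{B}_{\otimes}}(0,1)\cong \Proj{A\otimes\K}^{\times}$), and uses only two elementary tools: Segal's observation that a continuous natural transformation realizes to a homotopy, and the realization lemma for levelwise equivalences of good simplicial spaces. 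You instead cut out only the object $1$ and handle everything else by a comma-category argument. Your identification of the problematic case $n=0$ is exactly right -- $0$ is not isomorphic to $1$ in $\mathcal{B}_{\otimes}$, which is why the paper cannot simply take the one-object full subcategory -- and your computation $B(0\downarrow\iota)\simeq \Proj{A\otimes\K}^{\times}_{hG}\simeq B\AutSt{A\otimes\K}{1\otimes e}\simeq \ast$ uses precisely the same inputs (Lemma~\ref{lem:full-AutAKfibration}, Theorem~\ref{thm:stabContractible}) that the paper feeds into its category $\mathcal{H}$; in effect $\mathcal{H}$ is the paper's way of making your homotopy-orbit argument elementary. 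Your treatment of the square \eqref{eqn:multdiagram} via the natural transformation with component $\psi\in\hom_{\mathcal{B}_{\otimes}}(1,2)$ is identical to the paper's.

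The one step you should not leave as a black box is the appeal to ``the topological analogue of Quillen's Theorem~A.'' Theorem~A is a statement about discrete (or simplicial) categories; for topological categories it is not an off-the-shelf result, and the standard bisimplicial proof only goes through after verifying properness/goodness of the nerves involved so that the realization lemma applies, plus continuity of the contractions of the comma categories (your initial-object contraction of $1\downarrow\iota$ and the levelwise equivalence $B(\ast,G,X)\to B(\ast,M,X)$ both need this). In the present situation the object space of $\mathcal{B}_{\otimes}$ is discrete and all the morphism spaces are well-pointed by Proposition~\ref{prop:wellpointed}, so the hypotheses are almost certainly verifiable, but as written you are citing a theorem you would first have to state and prove. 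If you want to keep your architecture, the cleanest fix is to replace the Theorem~A citation by the paper's own two lemmas: adjoin the object $0$ back in (i.e.\ work with $\mathcal{B}^1_{\otimes}$, which \emph{is} equivalent to $\mathcal{B}_{\otimes}$ by an honest continuous inverse-up-to-natural-transformation using fixed isomorphisms $\psi_k\colon A\otimes\K\to(A\otimes\K)^{\otimes k}$), and then deal with the $0$ object by the homotopy equivalence $\Endo{A\otimes\K}^{\times}\to\Proj{A\otimes\K}^{\times}$ exactly as you do, but packaged as a levelwise equivalence of nerves rather than as a cofinality statement.
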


\begin{proof}\label{pf:B_Btensor_equiv}
To prove the first statement we will construct auxiliary categories $\mathcal{E}$, $\mathcal{H}$ and $\mathcal{B}_{\otimes}^1$ together with inclusion functors $\mathcal{B} \to \mathcal{E} \to \mathcal{H} \to \mathcal{B}_{\otimes}^1 \to \mathcal{B}_{\otimes}$ that give a factorization of $J$. We then show that each of these functors induces a homotopy equivalence on classifying spaces. We will use the following two facts.

(a) Given two topological categories $\mathcal{C}$ and $\mathcal{D}$ together with continuous functors $F \colon \mathcal{C} \to \mathcal{D}$, $G \colon \mathcal{D} \to \mathcal{C}$ and natural transformations $F \circ G \Rightarrow \id{\mathcal{D}}$, $G \circ F \Rightarrow \id{\mathcal{C}}$, it follows that $F$ and $G$ induce a homotopy equivalence of the corresponding classifying spaces. This is a corollary of \cite[Prop.2.1]{Segal:paper-Classifying-spaces}.

(b) Consider two good simplicial spaces $X_{\bullet}$ and $Y_{\bullet}$ (``good'' refers to \cite[Definition A.4]{paper:SegalCatAndCoh}) together with a simplicial map $f_{\bullet} \colon X_{\bullet} \to Y_{\bullet}$. If $f_n \colon X_n \to Y_n$ is a homotopy equivalence for each $n \in \N_0$, then $\lvert f_{\bullet} \rvert \colon \lvert X_{\bullet} \rvert \to \lvert Y_{\bullet} \rvert$ is also a homotopy equivalence. This is proven in \cite[Proposition A.1 (ii) and (iv)]{paper:SegalCatAndCoh}. Note in particular, that the nerve $N_{\bullet}\mathcal{C}$ of a topological category $\mathcal{C}$ is good, if the map $\obj(\mathcal{C}) \to \mor(\mathcal{C})$, which sends an object to the identity on it, is a cofibration. This holds for all categories in this proof by Proposition \ref{prop:wellpointed}.

The object space of $\mathcal{E}$ consists of a single point and its morphism space is $\Endo{A \otimes \K}^{\times}$. From Lemma \ref{lem:full-AutAKfibration} and Theorem \ref{thm:stabContractible} we obtain that $\Aut{A \otimes \K} \to \Endo{A \otimes \K}^{\times}$ is a homotopy equivalence. Thus each component   $N_k\mathcal{B} \to N_k\mathcal{E}$  of the simplicial map $ N_{\bullet}\mathcal{B} \to  N_{\bullet}\mathcal{E}$ induced by the
 inclusion functor $\mathcal{B} \to \mathcal{E}$ is  a homotopy equivalence of spaces.    This yields a homotopy equivalence $\lvert N_{\bullet}\mathcal{B}\rvert \to \lvert N_{\bullet}\mathcal{E}\rvert$ by (b) above.

The category $\mathcal{B}^{1}_{\otimes}$ is the full subcategory of $\mathcal{B}_{\otimes}$ containing the objects $0$ and $1$. To see that the inclusion functor $\iota \colon \mathcal{B}^{1}_{\otimes} \to \mathcal{B}_{\otimes}$ is an equivalence of categories, we argue as follows: Define an inverse functor $\tau \colon \mathcal{B}_{\otimes} \to \mathcal{B}^{1}_{\otimes}$ for $\iota$, such that $\tau(m) = \min\{m,1\}$ on objects. Let  $\psi_0 = \id{\C}$ and fix isomorphisms $\psi_k \colon A \otimes \K \to (A \otimes \K)^{\otimes k}$ for $k \in \N$ with $\psi_1 = \id{A \otimes \K}$. Define $\tau(\beta) = \psi_{\ell}^{-1} \circ \beta \circ \psi_k$ for $\beta \in \hom_{\mathcal{B}_{\otimes}}(k,\ell)$. We have $\tau \circ \iota = \id{{\mathcal{B}_{\otimes}^{1}}}$ and the $\psi_k$ yield a natural transformation $\iota \circ \tau \Rightarrow \id{\mathcal{B}_{\otimes}}$. Thus, the map $\lvert N_{\bullet} \mathcal{B}_{\otimes}^{1}\rvert \to \lvert N_{\bullet} \mathcal{B}_{\otimes} \rvert$ induced by $\iota$ is a homotopy equivalence by (a).

Let $\mathcal{H}$ be the topological category with object space $\{0,1\}$ and morphism spaces $\hom_{\mathcal{H}}(0,0) = \{\id{A \otimes \K}\}$, $\hom_{\mathcal{H}}(0,1) = \hom_{\mathcal{H}}(1,1) = \Endo{A \otimes \K}^{\times}$ and $\hom(1,0) = \emptyset$. The composition is induced by the composition in $\Endo{A \otimes \K}^{\times}$. Note that there is a restriction functor $\mathcal{H} \to \mathcal{B}_{\otimes}^{1}$, which takes $\beta \in \hom_{\mathcal{H}}(0,1) = \Endo{A \otimes \K}^{\times}$ to $\widetilde{\beta} \in \hom_{\mathcal{B}_{\otimes}^{1}}(0,1) = \hom_{\mathcal{B}_{\otimes}}(0,1)$, where $\widetilde{\beta}(\lambda) = \lambda\beta(1 \otimes e)$ for $\lambda \in \C$. It maps the spaces $\hom_{\mathcal{H}}(0,0)$ and $\hom_{\mathcal{H}}(1,1)$ identically onto $\hom_{\mathcal{B}_{\otimes}^1}(0,0)$ and $\hom_{\mathcal{B}_{\otimes}^1}(1,1)$ respectively. By Lemma \ref{lem:full-AutAKfibration} and Theorem \ref{thm:stabContractible} the restriction map $\Endo{A \otimes \K}^{\times} \to \mathrm{Hom}(\C, A \otimes \K)^\times \cong \Proj{A \otimes \K}^{\times}$ is a homotopy equivalence. Therefore the simplicial map $N_{k}\mathcal{H} \to N_{k}\mathcal{B}_{\otimes}^{1}$ is a homotopy equivalence
   for each $k$, and hence $\lvert N_{\bullet}\mathcal{H}\rvert \to \lvert N_{\bullet}\mathcal{B}_{\otimes}^{1} \rvert$ is a homotopy equivalence by (b).

Let $\iota_{\mathcal{E}} \colon \mathcal{E} \to \mathcal{H}$ be the inclusion functor. Let $\tau_{\mathcal{E}}\colon \mathcal{H} \to \mathcal{E}$ be the functor, which maps the two objects of $\mathcal{H}$ to the one of $\mathcal{E}$ and which embeds the spaces $\hom_{\mathcal{H}}(0,0)$, $\hom_{\mathcal{H}}(0,1)$ and $\hom_{\mathcal{H}}(1,1)$ into $\Endo{A \otimes \K}^{\times}$ in a canonical way. We have $\tau_{\mathcal{E}} \circ \iota_{\mathcal{E}} = \id{\mathcal{E}}$. There is a natural transformation $\kappa \colon \id{\mathcal{H}} \Rightarrow \iota_{\mathcal{E}} \circ \tau_{\mathcal{E}}$ with $\kappa_1 = \id{A \otimes \K} \in \hom_{\mathcal{H}}(1,1)$ and $\kappa_0 = \id{A \otimes \K} \in \hom_{\mathcal{H}}(0,1)$. It follows that $\iota_{\mathcal{E}}$ also induces an equivalence on classifying spaces by (a). This concludes the proof of the first statement.

Let $\beta_1, \beta_2$ be morphisms in $\mathcal{B}$, then $(J \circ \otimes_{\psi})(\beta_1, \beta_2) = \beta_1 \otimes_{\psi} \beta_2 = \psi^{-1} \circ (\beta_1 \otimes \beta_2) \circ \psi \in \hom_{\mathcal{B}_{\otimes}}(1,1)$, whereas $\otimes \circ (J \times J)(\beta_1, \beta_2) = \beta_1 \otimes \beta_2 \in \hom_{\mathcal{B}_{\otimes}}(2,2)$ and $\psi \in \hom_{\mathcal{B}_{\otimes}}(1,2)$ provides a natural transformation $J \circ \otimes_{\psi} \Rightarrow \otimes \circ (J \times J)$. Thus these two functors induce
homotopic maps of classifying spaces by \cite[Prop.2.1]{Segal:paper-Classifying-spaces}.
  This completes the proof.
\end{proof}

\begin{corollary} \label{cor:AutIsInfLoopSpace}
The space $B\Aut{A \otimes \K}$ inherits an infinite loop space structure via the homotopy equivalence $B\Aut{A \otimes \K} \to B\Endo{A \otimes \K}^{\times}_{\otimes}$ in such a way that the induced $H$-space structure of $B\Aut{A \otimes \K}$ agrees with the one given by $\mu_{\psi}$.
\end{corollary}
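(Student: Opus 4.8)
The plan is to apply the infinite loop space machine of Theorem~\ref{thm:InfLoopSpace} to the permutative category $\mathcal{B}_{\otimes}$ and then transport the resulting structure to $B\Aut{A \otimes \K}$ along the homotopy equivalence of Lemma~\ref{lem:B_Btensor_equiv}. Since $\mathcal{B}_{\otimes}$ is a well-pointed permutative category, the first part of Theorem~\ref{thm:InfLoopSpace} immediately yields that $\Omega B\lvert N_{\bullet} \mathcal{B}_{\otimes}\rvert$ is an infinite loop space. To upgrade this to an assertion about $\lvert N_{\bullet}\mathcal{B}_{\otimes}\rvert = B\Endo{A \otimes \K}^{\times}_{\otimes}$ itself, I would verify the hypothesis of the second half of the theorem, namely that $\pi_0(\lvert N_{\bullet}\mathcal{B}_{\otimes}\rvert)$ is a group under the monoid structure coming from $\otimes$.

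First I would check that $\lvert N_{\bullet}\mathcal{B}_{\otimes}\rvert$ is in fact path-connected, so that this $\pi_0$ is the trivial group. Each morphism of $\mathcal{B}_{\otimes}$ gives rise to a $1$-simplex, hence a path in the classifying space joining the corresponding objects, so it suffices to connect any two objects by a zig-zag of morphisms. For $n,m \geq 1$ the strong self-absorption $A \otimes A \cong A$ together with $\K \otimes \K \cong \K$ provides an isomorphism $(A \otimes \K)^{\otimes n} \cong (A \otimes \K)^{\otimes m}$, which is $KK$-invertible and hence a morphism $n \to m$ in $\mathcal{B}_{\otimes}$. The object $0$ is joined to any $m \geq 1$ by a $*$-homomorphism $\C \to (A \otimes \K)^{\otimes m}$ whose value on $1$ is a projection with invertible $K_0$-class, for instance $\lambda \mapsto \lambda\,(1_A \otimes e)^{\otimes m}$. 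Thus all objects lie in a single component and the hypothesis is satisfied.

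Consequently the second part of Theorem~\ref{thm:InfLoopSpace} shows that the canonical map $\lvert N_{\bullet}\mathcal{B}_{\otimes}\rvert \to \Omega B\lvert N_{\bullet}\mathcal{B}_{\otimes}\rvert$ is a homotopy equivalence of $H$-spaces, where the source carries the $H$-space structure induced by the monoidal product $\otimes$. Hence $B\Endo{A \otimes \K}^{\times}_{\otimes}$ is itself an infinite loop space and, up to homotopy, its multiplication is the one coming from $\otimes$.

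Finally I would transport this structure across the homotopy equivalence $B\Aut{A \otimes \K} \to B\Endo{A \otimes \K}^{\times}_{\otimes}$ of Lemma~\ref{lem:B_Btensor_equiv}. Since the notion of infinite loop space in Definition~\ref{def:infinite_loop_space} only requires homotopy equivalences $E_i \simeq \Omega E_{i+1}$ rather than strict identities, a homotopy equivalence at the bottom level suffices to endow $B\Aut{A \otimes \K}$ with an infinite loop space structure. The same lemma shows that this equivalence intertwines the $\otimes$-multiplication on the target with the $\otimes_{\psi}$-multiplication on the source up to homotopy, and by Lemma~\ref{lem:HSpace} the latter agrees with $\mu_{\psi}$. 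The main point requiring care is precisely this last bit of bookkeeping: one must ensure that the $H$-space structure underlying the transported infinite loop structure is exactly $\mu_{\psi}$, and not merely some abstractly equivalent multiplication. This is guaranteed by chaining together the compatibility of $H$-space structures recorded in Lemmas~\ref{lem:B_Btensor_equiv} and~\ref{lem:HSpace}.
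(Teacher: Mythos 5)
Your proposal is correct and follows essentially the same route as the paper: apply Theorem~\ref{thm:InfLoopSpace} to the permutative category $\mathcal{B}_{\otimes}$ and transport the resulting structure along the homotopy equivalence of Lemma~\ref{lem:B_Btensor_equiv}, using Lemma~\ref{lem:HSpace} to identify the $H$-space structure with $\mu_{\psi}$. The only difference is that you explicitly verify the hypothesis that $\pi_0(\lvert N_{\bullet}\mathcal{B}_{\otimes}\rvert)$ is a group by checking path-connectedness, a point the paper leaves implicit; your verification is valid.
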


\begin{proof} \label{pf:AutIsInfLoopSpace}
By Theorem \ref{thm:InfLoopSpace}, $B\Endo{A \otimes \K}^{\times}_{\otimes}$ is an infinite loop space with $H$-space structure induced by the tensor product of $\mathcal{B}_{\otimes}$. By Lemma \ref{lem:B_Btensor_equiv}, $B\Aut{A \otimes \K} \to B\Endo{A \otimes \K}^{\times}_{\otimes}$ is a homotopy equivalence and a map of $H$-spaces.
\end{proof}
	
\begin{theorem}\label{thm:MainTheorem} Let $A$ be a strongly self-absorbing C*-algebra.
\begin{enumerate}[(a)]
	\item The monoid $(\BunX{A \otimes \K}, \otimes)$ of isomorphism classes of principal $\Aut{A \otimes \K}$-bundles is an abelian group.
	\item $B\Aut{A \otimes \K}$ is the first space in a spectrum defining a cohomology theory $E^{\bullet}_{A}$ with
\(
	E_{A}^{0}(X) = [X, \Aut{A \otimes \K}]\)  and \(E_{A}^{1}(X) = \BunX{A \otimes \K}.
\)
    \item If $X$ is a compact metrizable space  and $A\neq \C$, then $E_{A}^{0}(X)  \cong K_0(C(X)\otimes A)^\times_{+}$.
\end{enumerate}	
\end{theorem}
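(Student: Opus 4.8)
The plan is to deduce all three parts from the infinite loop space structure of Corollary~\ref{cor:AutIsInfLoopSpace} together with the $K$-theoretic computation of Theorem~\ref{thm:MapsEndAK}, the only genuinely new input being a careful bookkeeping of the spectrum indices.

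\textbf{Part (a).} First I would use the classification of principal bundles to identify $\BunX{A \otimes \K}$ with $[X, B\Aut{A \otimes \K}]$ as a set, under which the tensor product $\otimes$ corresponds to the $H$-space multiplication $\mu_{\psi}$; this is exactly the content of the last paragraph of the proof of Lemma~\ref{lem:HSpace}. By Corollary~\ref{cor:AutIsInfLoopSpace} this $H$-space structure is precisely the one underlying an infinite loop space structure on $B\Aut{A \otimes \K}$. Since an infinite loop space $E$ is in particular of the form $\Omega^2 E'$ for some space $E'$, the set $[X,E] \cong [\Sigma^2 X, E']$ is an \emph{abelian} group whose operation is induced by the $H$-structure. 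Applying this to $E = B\Aut{A \otimes \K}$ shows that $(\BunX{A \otimes \K}, \otimes)$ is an abelian group.

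\textbf{Part (b).} Next I would feed the permutative category $\mathcal{B}_{\otimes}$ into the infinite loop space machine. By Theorem~\ref{thm:InfLoopSpace} the space $F_0 := \Omega B \lvert N_{\bullet}\mathcal{B}_{\otimes}\rvert$ is an infinite loop space, hence the bottom space of a connective $\Omega$-spectrum $\{F_i\}_{i \geq 0}$ with $F_i \simeq \Omega F_{i+1}$. Since $\lvert N_{\bullet}\mathcal{B}_{\otimes}\rvert = B\Endo{A \otimes \K}^{\times}_{\otimes} \simeq B\Aut{A \otimes \K}$ by Lemma~\ref{lem:B_Btensor_equiv} and the latter is connected, $\pi_0(\lvert N_{\bullet}\mathcal{B}_{\otimes}\rvert)$ is trivially a group, so the \emph{moreover} clause of Theorem~\ref{thm:InfLoopSpace} gives $F_0 \simeq \lvert N_{\bullet}\mathcal{B}_{\otimes}\rvert \simeq B\Aut{A \otimes \K}$. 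I would then reindex, putting $E_i := F_{i-1}$ for $i \geq 1$ so that $E_1 = B\Aut{A \otimes \K}$, whence $E_A^1(X) = [X, B\Aut{A \otimes \K}] = \BunX{A \otimes \K}$, and extend the spectrum one step to the left by $E_0 := \Omega E_1 = \Omega B\Aut{A \otimes \K}$. The defining relations $E_i \simeq \Omega E_{i+1}$ then hold for $i \geq 1$ by the spectrum $\{F_i\}$ and for $i = 0$ by construction. It remains to identify $E_0$, i.e. to produce a homotopy equivalence $\Aut{A \otimes \K} \simeq \Omega B\Aut{A \otimes \K}$; this is the standard fact that for a well-pointed topological group of the homotopy type of a CW complex the canonical map $G \to \Omega BG$ is a homotopy equivalence, the two hypotheses being supplied by Proposition~\ref{prop:wellpointed} and Corollary~\ref{cor:AutAK_BUA}. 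This yields $E_A^0(X) = [X, \Aut{A \otimes \K}]$.

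\textbf{Part (c).} This is now immediate: combining the identification $E_A^0(X) = [X, \Aut{A \otimes \K}]$ from part~(b) with the group isomorphism $[X, \Aut{A \otimes \K}] \cong K_0(C(X) \otimes A)^{\times}_{+}$ of Theorem~\ref{thm:MapsEndAK} (valid for compact metrizable $X$ and $A \neq \C$) gives $E_A^0(X) \cong K_0(C(X) \otimes A)^{\times}_{+}$.

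\textbf{Main obstacle.} The only delicate point is the bookkeeping in part~(b): the infinite loop space machine naturally produces the \emph{delooping} $B\Aut{A \otimes \K}$ as its bottom space $F_0$, whereas the cohomology theory is normalized so that $B\Aut{A \otimes \K}$ sits in degree $1$. Getting the reindexing right, and in particular justifying the one extra delooping to the left via the equivalence $\Aut{A \otimes \K} \simeq \Omega B\Aut{A \otimes \K}$, is where the care is required; parts~(a) and~(c) are then formal consequences of the machinery and of Theorem~\ref{thm:MapsEndAK} respectively.
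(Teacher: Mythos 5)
Your proof is correct and takes essentially the same route as the paper: part (a) from the infinite loop space structure of Corollary~\ref{cor:AutIsInfLoopSpace} combined with the identification of $\otimes$ with the $H$-space multiplication from Lemma~\ref{lem:HSpace}, part (b) by placing $B\Aut{A \otimes \K}$ in degree $1$ of the spectrum and identifying the $0$th space as $\Omega B\Aut{A \otimes \K} \simeq \Aut{A \otimes \K}$, and part (c) from Theorem~\ref{thm:MapsEndAK}. Your additional care with the reindexing and with justifying $\Aut{A \otimes \K} \simeq \Omega B\Aut{A \otimes \K}$ via Proposition~\ref{prop:wellpointed} and Corollary~\ref{cor:AutAK_BUA} merely makes explicit what the paper's terser argument leaves implicit.
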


\begin{proof}\label{pf:MainTheorem}
By Corollary \ref{cor:AutIsInfLoopSpace} the space $B\Aut{A \otimes \K}$ is an infinite loop space with $H$-space structure given by with $\otimes_{\psi}$, which implies the first part. As described above, an infinite loop space yields a spectrum and therefore a cohomology theory via iterated delooping. If we consider $B\Aut{A \otimes \K}$ as the first space of the spectrum, we obtain the $0$th one by forming the loop space. But this is
\[
	\Omega B\Aut{A \otimes \K} \simeq \Aut{A \otimes \K}\ ,
\] 	
which proves the second statement. The last statement follows from Theorem~\ref{thm:MapsEndAK}.
\end{proof}

\begin{corollary}\label{cou:uAutIsInfLoopSpace}
For any strongly self-absorbing C*-algebra $A$ the space $B\uAut{A \otimes \K}$ is an infinite loop space with respect to its natural tensor product operation. The corresponding cohomology theory is denoted by $\bar{E}^*_A(X)$.
\end{corollary}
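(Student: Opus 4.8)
The plan is to mimic the construction of the permutative category $\mathcal{B}_{\otimes}$ from the previous section, but to replace each morphism space by the connected component containing a fixed reference isomorphism, and then to feed the resulting category into Theorem~\ref{thm:InfLoopSpace} exactly as in Corollary~\ref{cor:AutIsInfLoopSpace}. Concretely, fix isomorphisms $\psi_k \colon A \otimes \K \to (A\otimes\K)^{\otimes k}$ with $\psi_1 = \id{A \otimes \K}$ as in Lemma~\ref{lem:B_Btensor_equiv}, and let $\mathcal{B}^0_{\otimes}$ be the subcategory of $\mathcal{B}_{\otimes}$ with object space $\N_0$ whose morphism space $\hom_{\mathcal{B}^0_{\otimes}}(n,m)$ is, for $n,m \geq 1$, the path component of $\psi_m \circ \psi_n^{-1}$ inside $\hom_{\mathcal{B}_{\otimes}}(n,m)$, and, for $n = 0$, the component $\uProj{(A\otimes\K)^{\otimes m}}$ of $(1\otimes e)^{\otimes m}$ inside $\hom_{\mathcal{B}_{\otimes}}(0,m) \cong \Proj{(A\otimes\K)^{\otimes m}}^{\times}$. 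In particular the morphism monoid $\hom_{\mathcal{B}^0_{\otimes}}(1,1)$ is the identity component $\uEnd{A\otimes\K}$, which by Corollary~\ref{cor:CWcomplex} is homotopy equivalent to $\uAut{A\otimes\K}$.

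First I would check that $\mathcal{B}^0_{\otimes}$ is again a well-pointed permutative category. Closure under composition is immediate, since $\psi_\ell \psi_m^{-1}$ composed with $\psi_m \psi_n^{-1}$ is the reference morphism $\psi_\ell \psi_n^{-1}$. Closure under the monoidal product $\otimes$ follows from the multiplicativity of the invariant $\Theta[\alpha] = [\alpha(1 \otimes e)]$ established in Lemma~\ref{lemma:p-tensor-q} together with Lemma~\ref{lem:EckmannHilton1}: if $\alpha$ and $\beta$ lie in the reference components, then so does $\alpha \otimes \beta$, because its class under $\Theta$ is the product of the (trivial) classes of $\alpha$ and $\beta$. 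The symmetries $c_{n,m}$ lie in $\mathcal{B}^0_{\otimes}$ because $c_{n,m}$ fixes the reference projection $(1 \otimes e)^{\otimes (n+m)}$, so $\Theta[c_{n,m}]$ is the unit of $K_0(A)^{\times}_{+}$ and $c_{n,m}$ sits in the identity component, the identification $\pi_0(\Aut{A \otimes \K}) \cong K_0(A)^{\times}_{+}$ via $\Theta$ being valid for all $A$ by Lemma~\ref{lemma:p-tensor-q} and Corollary~\ref{cor:AutAK_BUA}. Well-pointedness of the object-to-identity inclusion is inherited from Proposition~\ref{prop:wellpointed}, so the nerve $N_{\bullet}\mathcal{B}^0_{\otimes}$ is good.

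Next I would establish a homotopy equivalence $\lvert N_{\bullet}\mathcal{B}^0_{\otimes}\rvert \simeq B\uAut{A \otimes \K}$ of $H$-spaces by running the chain of intermediate categories $\mathcal{B}^0 \to \mathcal{E}^0 \to \mathcal{H}^0 \to \mathcal{B}^{1,0}_{\otimes} \to \mathcal{B}^0_{\otimes}$ from the proof of Lemma~\ref{lem:B_Btensor_equiv}, where every morphism space is replaced by its identity component. The required levelwise homotopy equivalences persist after this restriction, because $\uAut{A \otimes \K} \to \uEnd{A \otimes \K}$ and $\uEnd{A \otimes \K} \to \uProj{A \otimes \K}$ are homotopy equivalences by Lemma~\ref{lem:full-AutAKfibration}, Theorem~\ref{thm:stabContractible} and Corollary~\ref{cor:CWcomplex}; the natural transformations used in cases (a) and (b) of that proof restrict verbatim. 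The compatibility of the resulting $H$-space structure with the tensor product $\mu_{\psi}$ is then read off from the restricted version of diagram~\eqref{eqn:multdiagram} exactly as in Lemmas~\ref{lem:HSpace} and~\ref{lem:B_Btensor_equiv}. Applying Theorem~\ref{thm:InfLoopSpace}, whose hypothesis holds since $\lvert N_{\bullet}\mathcal{B}^0_{\otimes}\rvert$ is connected, then exhibits $B\uAut{A \otimes \K}$ as an infinite loop space whose $H$-space structure is the natural tensor product. The coefficients $\pi_i(\bar{E}_A) = K_i(A)$ for $i \geq 1$ and $\pi_0(\bar{E}_A) = 0$ identify $\bar{E}^*_A$ as the $1$-connected cover of $E^*_A$; equivalently, $B\uAut{A \otimes \K}$ is the universal cover of $B\Aut{A \otimes \K}$.

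I expect the main obstacle to be bookkeeping rather than conceptual: one must pin down the correct path components for all morphism spaces simultaneously -- especially the component $\hom_{\mathcal{B}^0_{\otimes}}(0,m) = \uProj{(A\otimes\K)^{\otimes m}}$ involving the object $0$ -- and verify that the restriction of each functor and natural transformation in Lemma~\ref{lem:B_Btensor_equiv} still lands in these components, so that the restricted simplicial maps remain levelwise homotopy equivalences between good simplicial spaces.
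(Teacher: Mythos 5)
Your proposal is correct and follows essentially the same route as the paper: replace $\mathcal{B}$ and $\mathcal{B}_\otimes$ by subcategories $\mathcal{B}^0$ and $\mathcal{B}^0_\otimes$ with restricted morphism spaces, rerun the chain of equivalences from Lemma~\ref{lem:B_Btensor_equiv} with the $\psi_k$ normalized so that $\psi_k(1\otimes e)=(1\otimes e)^{\otimes k}$, and feed the result into Theorem~\ref{thm:InfLoopSpace}. The only cosmetic difference is that you carve out the morphism spaces as path components of reference morphisms, whereas the paper uses the condition $[\alpha((1\otimes e)^{\otimes n})]=[(1\otimes e)^{\otimes m}]$ in $K_0$; these coincide by Proposition~\ref{prop:homotopy-of-endoss}(i), so the bookkeeping concern you flag is harmless.
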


\begin{proof}
The proof is entirely similar to the proof of Theorem~\ref{thm:MainTheorem}, except that we replace the category $\mathcal{B}$ by the topological category $\mathcal{B}^0$  which has as its object space just a single point and the group $\uAut{A \otimes \K}$ as its morphism space. Likewise we replace the category  $\mathcal{B}_{\otimes}$ by the category $\mathcal{B}^0_{\otimes}$ defined as follows. The object space of $\mathcal{B}^0_{\otimes}$ is $\N_0$. The morphisms $\hom(n,m)$ are given by those maps $\alpha$ in $\hom((A \otimes \K)^{\otimes n}, (A \otimes \K)^{\otimes m})$ with the property that $[\alpha((1\otimes e)^{\otimes n})]=[(1\otimes e)^{\otimes m}]$ in $K_0((A \otimes \K)^{\otimes m})$. The proof of lemma \ref{lem:B_Btensor_equiv} still works with the following modifications: There are straightforward replacements $\mathcal{E}^0$, $\left(\mathcal{B}_{\otimes}^{1}\right)^0$ and $\mathcal{H}^0$ of the categories $\mathcal{E}$, $\mathcal{B}_{\otimes}^{1}$ and $\mathcal{H}$ by taking those endomorphisms that preserve the $K$-theory class of $1 \otimes e$. The isomorphisms $\psi_k$ used in the proof can be chosen such that $\psi_k(1\otimes e)=(1\otimes e)^{\otimes k}$. The restriction functor $\mathcal{H}^0 \to \left(\mathcal{B}_{\otimes}^{1}\right)^0$ still induces an equivalence by lemma \ref{lem:AutAKfibration} and theorem \ref{thm:stabContractible}.
\end{proof}

\begin{remark}\label{rem:continuity}
We have seen that the classifying space $B\Aut{A \otimes \K}$ has the homotopy type of a CW complex. Since its homotopy groups are countable, it follows that this space is homotopy equivalent to a locally finite simplicial complex and hence to an absolute neighborhood extensor, see \cite[Thm.6.1, p.137]{book:LundellWeingram}. It follows that $E_{A}^{1}(X)$ is a continuous functor in the sense that if $X$ is the projective limit of projective system $(X_n)_n$ of compact metrizable spaces, then $E_{A}^{1}(X)\cong \varinjlim E_{A}^{1}(X_n)$, see \cite[Thm.11.9, p.287]{Book:Eilenberg-Steenrod}. Since any compact metrizable space $X$ is the projective limit of a system of finite polyhedra $(X_n)_n$ by \cite{Book:Eilenberg-Steenrod}, one can approach the computation of $E_{A}^{1}(X)$ by first computing $E_{A}^{1}(X_n)$ using the Atiyah-Hirzebruch spectral sequence.
\end{remark}

\section{A generalized Dixmier-Douady theory}\label{section:DD}
Recall from \cite[10.4]{book:Dixmier:C*-algebras} that if $\mathcal{B}=((B(x))_{x\in X},\Theta)$  is a continuous field
of C*-algebras over a locally compact space $X$, the C*-algebra $B$ associated to $\mathcal{B}$ consists
of all elements $\theta$ of $\Theta$ such that the function $x\mapsto \|\theta(x)\|$ vanishes at infinity. 
As it has become customary in the literature, the C*-algebra $B$ will be also referred to as a continuous field
of C*-algebras. Note that $B=\Theta$ if $X$ is compact.
\begin{definition}\label{def:Fell}
Let $B$ be a continuous field of C*-algebras over a locally compact metrizable space $X$ whose fibers are stably isomorphic to strongly self-absorbing C*-algebras, which are not necessarily mutually isomorphic. We say that
$B$ satisfies \emph{the Fell condition} if for each point
$x\in X$, there is a closed neighborhood $V$ of $x$ and a projection $p\in B(V)$ such that
$[p(v)]\in K_0(B(v))^{\times}$  for all $v\in V$. If one can choose $V=X$, then we say that $B$ satisfies the global Fell
condition.

\end{definition}
\begin{theorem}\label{thm:trivialization} Let $A$ be a strongly self-absorbing C*-algebra. Let $X$ be a  locally compact
space of finite covering dimension and let $B$ be a separable continuous field of C*-algebras over $X$ with all fibers abstractly isomorphic to $A\otimes \K$.
Then $B$ is locally trivial if and only if it satisfies Fell's condition. If $X$ is compact, then $B$ is trivial if and only if
$B$ satisfies the global Fell condition.
\end{theorem}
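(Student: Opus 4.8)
The plan is to treat the two implications separately; the forward direction is routine and the reverse is the substance of the theorem. If $B|_V \cong C_0(V)\otimes A \otimes \K$ over a trivializing neighborhood $V$, then the constant projection $p = 1\otimes 1_A \otimes e$ (viewed in the multiplier-completion as needed) satisfies $[p(v)] = [1_A]\in K_0(A)^\times$ for every $v\in V$ under the identification $K_0(B(v)) = K_0(A\otimes\K)\cong K_0(A)$, so Fell's condition holds; taking $V=X$ shows that a trivial $B$ satisfies the global Fell condition. It therefore remains to prove that Fell's condition forces local triviality, and that the global Fell condition forces triviality when $X$ is compact.

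First I would reduce to a single projection. By Fell's condition each $x\in X$ has a closed (compact) neighborhood $V$ carrying a projection $p\in B(V)$ with $[p(v)]\in K_0(B(v))^\times$ for all $v$. Since the class of a projection lies in $K_0(A)_{+}$, in fact $[p(v)]\in K_0(A)^\times_{+}$, so Lemma~\ref{lemma:p-tensor-q} gives fibrewise isomorphisms $p(v)\,B(v)\,p(v)\cong A$. Thus the corner $D := \overline{p\,B|_V\,p}$ is a separable \emph{unital} continuous field over $V$, with unit $p$, all of whose fibres are isomorphic to $A$. Moreover each $p(v)$ is full in $B(v)=A\otimes\K$, so a continuous-field refinement of the stabilization used in Lemma~\ref{lem:homotopy-of-endos} yields $B|_V \cong D\otimes\K$. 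The whole problem is thereby reduced to showing that a unital continuous field with fibres $\cong A$ over a finite-dimensional base is trivial.

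The heart of the argument is the trivialization of $D$. Here I would construct a $C(V)$-linear unital $*$-homomorphism $\Phi\colon C(V)\otimes A\to D$ that is a fibrewise isomorphism; since a fibrewise isomorphism of continuous fields is an isomorphism, this gives $D\cong C(V)\otimes A$ and hence $B|_V\cong C_0(V)\otimes A\otimes\K$. To build $\Phi$ I would proceed inductively over a finite closed cover of $V$ adapted to its covering dimension. On each piece a unital homomorphism $A\to D$ realizing the fibrewise isomorphisms exists because $D(v)\cong A$ and because the space of unital endomorphisms $\mathrm{End}_{1_A}(A)$ is contractible by Theorem~\ref{thm:AutAcontractible}, which removes the gauge ambiguity in selecting continuous fibrewise data. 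On overlaps two such choices induce full homomorphisms into the fibres sharing the same $K_0$-class of $1_A\otimes e$, hence are approximately unitarily equivalent by Proposition~\ref{prop:homotopy-of-endoss}; combined with the unitary-homotopy uniqueness of Theorem~\ref{thm:cancellation}(b) and $\ZZ$-stability, this lets one patch the local homomorphisms into a single global fibrewise isomorphism, the approximate intertwiners being upgraded to an exact one by an Elliott-type argument exploiting $A\cong A\otimes A$.

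The main obstacle I anticipate is precisely this last step: producing, over a finite-dimensional base and \emph{without} presupposing any local triviality of $D$, a genuine (not merely fibrewise) trivializing homomorphism out of the purely fibrewise data that Fell's condition supplies. Finite-dimensionality enters exactly here, guaranteeing that the obstructions to the inductive patching -- which live in the cohomology of $V$ with coefficients in the homotopy groups of the relevant (contractible) gauge spaces -- vanish; the infinite-dimensional counterexamples for $A=\QQ$ and $A=\OO_2$ show the dimension hypothesis is indispensable. Finally, the compact case follows by running the identical construction with $V=X$: the single projection furnished by the global Fell condition makes $D=\overline{p\,B\,p}$ globally unital with fibres $\cong A$, and the trivialization produces $B\cong C(X)\otimes A\otimes\K$ outright.
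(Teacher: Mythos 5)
Your reduction is the same as the paper's: pass to the corner $D=pB(V)p$, which is a unital continuous field with fibers $\cong A$ by Lemma~\ref{lemma:p-tensor-q}, then recover $B(V)$ from $D$ by stabilization. But there are two genuine gaps in how you complete it. First, the step ``$B|_V\cong D\otimes\K$'' does not follow from fullness of $p$ and the stabilization trick of Lemma~\ref{lem:homotopy-of-endos}. Brown's theorem only gives $D\otimes\K\cong B(V)\otimes\K$ as $C(V)$-algebras; to conclude $B(V)\cong D\otimes\K$ you must also know that $B(V)\cong B(V)\otimes\K$, i.e.\ that the \emph{field} (not just its fibers) is stable. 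Fiberwise stability does not imply stability of the section algebra in general --- this is precisely one of the phenomena studied by Hirshberg--R{\o}rdam--Winter, and the paper invokes their theorem that a separable $C_0(X)$-algebra over a finite-dimensional space with stable fibers is stable. This is one of the two places where finite covering dimension is genuinely used, and your proposal omits it.

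Second, the heart of the matter --- trivializing the unital field $D$ with fibers $\cong A$ over a finite-dimensional base --- is handled in the paper by citing the Dadarlat--Winter trivialization theorem, whereas you sketch a patching argument that is not a proof and whose stated mechanism is internally inconsistent. You say the obstructions to patching ``live in the cohomology of $V$ with coefficients in the homotopy groups of the relevant (contractible) gauge spaces'' and vanish by finite-dimensionality; but cohomology with coefficients in the homotopy groups of a contractible space vanishes over \emph{any} base, so this reasoning would prove triviality over infinite-dimensional spaces as well, contradicting the counterexamples of Hirshberg--R{\o}rdam--Winter and of \cite{Dad:fiberwiseKK} that you yourself cite. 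The true role of finite-dimensionality in the Dadarlat--Winter argument is quantitative: the local data one can produce are only \emph{approximate} unital homomorphisms (exact local sections need not exist a priori), and one must run an infinite sequence of approximate intertwinings whose error accumulation is controlled by the number of overlaps in covers of bounded multiplicity. That analysis is the content of an entire separate paper and cannot be replaced by an obstruction-theoretic vanishing statement. If you do not wish to reprove it, the correct move is to quote \cite{paper:Dadarlat-Winter-fields} at this point, exactly as the paper does, and to quote \cite{paper.Hirshberg.Rordam.Winter} for the stability step; with those two citations your outline becomes the paper's proof.
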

\begin{proof}  Suppose that there is a projection
$p\in B(V)$ such that $[p(v)]\in K_0(B(v))^{\times}$  for all $v$ in a compact subset $V$ of $X$.
 We will show that $B(V)\cong C(V)\otimes A \otimes \K$. First we observe that by Lemma~\ref{lemma:p-tensor-q} it follows that
 $p(v)B(v)p(v)\cong A$, since $B(v)\cong A\otimes \K$. Therefore
 $pB(V)p$ is a unital continuous field over a finite dimensional space with fibers isomorphic to $A$ and hence $pB(V)p\cong C(V)\otimes A$
by \cite{paper:Dadarlat-Winter-fields}. Second, since $p$ is a full projection, we have that
$pB(V) p\otimes \K \cong B(V)\otimes \K$ as $C(V)$-algebras by \cite{paperL.G.Brown.Stable.Isom}. Third, $B(V)\otimes \K\cong B(V)$  by \cite{paper.Hirshberg.Rordam.Winter} since $V$ is finite dimensional
and each fiber of $B$ is stable. Putting these facts together we obtain the desired conclusion:
\[B(V)\cong B(V)\otimes \K \cong pB(V) p\otimes \K \cong C(V)\otimes A \otimes \K.\qedhere \]
\end{proof}
\begin{corollary}\label{cor-automatic-triviality}
 Let $X$ be a  locally compact
space of finite covering dimension.  Any separable continuous field of C*-algebras over $X$ with all fibers abstractly isomorphic to $M_{\Q}\otimes \K$ is locally trivial.
\end{corollary}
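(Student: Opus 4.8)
The plan is to reduce everything to Theorem~\ref{thm:trivialization} by showing that for $A=M_{\Q}$ the Fell condition is automatic. The mechanism is the very special ordered $K$-theory of the universal UHF algebra: $M_{\Q}$ is strongly self-absorbing (a UHF algebra of infinite type) and $K_0(M_{\Q})\cong\Q$ as an ordered ring with unit $[1_{M_{\Q}}]$, so its positive cone is $\Q_{\geq 0}$ and every nonzero positive element is invertible, i.e.
\[
	K_0(M_{\Q})_{+}\setminus\{0\}=K_0(M_{\Q})_{+}^{\times}=\Q_{>0}.
\]
Hence, for any fibre $B(v)\cong M_{\Q}\otimes\K$ and any nonzero projection $p\in B(v)$, the class $[p]$ automatically lies in $K_0(B(v))^{\times}$ under the identification $K_0(B(v))\cong K_0(M_{\Q})$. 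This reduces the verification of Fell's condition to the purely topological task of producing, near each point, a projection in $B(V)$ that is fibrewise nonzero; the invertibility of its $K$-theory classes is then free.

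First I would fix $x\in X$ and choose a nonzero projection $q\in B(x)$ (available since $B(x)\cong M_{\Q}\otimes\K$ contains, e.g., $1_{M_{\Q}}\otimes e$), and lift it along the surjection $B\to B(x)$ to a self-adjoint element $a\in B$ with $a(x)=q$. Then I would exploit the defining continuity of the field: the function $v\mapsto\lVert a(v)^2-a(v)\rVert$ is continuous and vanishes at $x$, so it stays below $1/4$ on a compact neighborhood $V$ of $x$. Since $t\mapsto t^2-t$ equals $-1/4$ at $t=1/2$, this separates $\sigma(a(v))$ from $1/2$ uniformly on $V$, and applying a continuous function $f\colon\R\to[0,1]$ equal to $\chi_{(1/2,\infty)}$ off a small interval about $1/2$ produces a genuine projection $p=f(a)|_{V}\in B(V)$ with $p(x)=f(q)=q\neq 0$. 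A final appeal to continuity of $v\mapsto\lVert p(v)\rVert$ together with $\lVert p(x)\rVert=1$ shows, after shrinking $V$, that $p(v)\neq 0$ for all $v\in V$, so $[p(v)]\in K_0(B(v))^{\times}$ by the observation above. Thus $B$ satisfies Fell's condition, and Theorem~\ref{thm:trivialization} gives local triviality.

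The hard part is really only this passage from a single fibre to a whole neighborhood: one must guarantee that the functional calculus yields an honest projection over all of $V$, which requires the spectra $\sigma(a(v))$ to avoid a fixed interval around $1/2$ rather than merely avoiding the single point $1/2$ fibrewise. This uniformity is exactly what the continuity of the norm functions in the continuous field $B$ provides. All the conceptual content sits in the elementary remark that the ordered $K_0$ of $M_{\Q}$ is $\Q$, so that every nonzero projection has invertible $K$-theory class; once that is noted, no algebraic input beyond Theorem~\ref{thm:trivialization} itself is needed.
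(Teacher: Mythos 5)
Your proof is correct and follows essentially the same route as the paper: reduce to Theorem~\ref{thm:trivialization} by verifying Fell's condition, produce a projection in $B(V)$ that is nonzero in every fibre near a given point, and observe that every nonzero projection class in $K_0(M_{\Q}\otimes\K)\cong\Q$ is automatically invertible. The only difference is cosmetic: where the paper invokes semiprojectivity of $\C$ to lift the projection to a neighborhood, you unpack that citation into the standard self-adjoint-lift-plus-functional-calculus argument.
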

\begin{proof} Let $B$ be a continuous field as in the statement. In view of Theorem~\ref{thm:trivialization} it suffices to show
that $B$ satisfies the Fell condition. Fix $x\in X$ and let $p_0\in B(x)\cong M_{\Q}\otimes \K$ be a non-zero projection. Since $\C$ is semiprojective, we can lift $p_0$
to a projection in $A(V)$ for some closed neighborhood $V$ of $x$. Since $A$ is a continuous field, the map {$v\mapsto \|p(v)\|$} is continuous.
Thus by shrinking $V$ we can arrange that $p(v)\neq 0$ for all $v\in V$, since $\|p(x)\|=\|p_0\|=1$. Since $B(v) \cong M_\Q\otimes \K$ it follows that $[p(v)]\in K_0(M_{\Q})\setminus \{0\}\cong \Q^\times \cong K_0(M_\Q)^{\times}$.
\end{proof}
Having obtained an efficient criterion for local triviality, we now turn to the question of classifying locally trivial continuous fields of C*-algebras
by cohomological invariants.
Let $X$ be a finite connected CW complex.
Let $R=K_0(A)$ and let $R_+^\times$ denote the multiplicative abelian group $K_0(A)^\times_+$.
If $A$ is purely infinite, then $K_0(A)^\times_+=K_0(A)^\times$ and so $R_+^\times=R^\times$.
Suppose that $A$ satisfies the UCT. Then $K_1(A)=0$ by \cite{paper:TomsWinter}.

 The coefficients of the generalized  cohomology theory $E_A^*(X)$ were computed in Theorem~\ref{thm:homotopy-groups}. Consequently,
by \cite{Hilton:book-generalized-cohomology}, the $E_2$-page of the Atiyah-Hirzebruch spectral sequence for the generalized cohomology $E_A^*(X)$, $A\neq \C$,
  looks as shown below.
\begin{center}
\begin{tikzpicture}
  \matrix (m) [matrix of math nodes,
    nodes in empty cells,nodes={minimum width=5ex,
    minimum height=5ex,outer sep=-3pt},
    column sep=1ex,row sep=1ex]{
         \quad\strut   &   0  &  1  &  2  &  3  & \\
         0     &   H^0(X,R_+^\times)  &  H^1(X,R_+^\times)  &  H^2(X,R_+^\times) &  H^3(X,R_+^\times)  &\\
         -1     &   0  &  0  &  0  &  0  &\\
         -2     &   H^0(X,R)  &  H^1(X,R)  &  H^2(X,R)  & H^3(X,R) \\
         -3     &   0  &  0  &  0  &  0  &\\
         -4  \strut   &   H^0(X,R)  &  H^1(X,R)  &  H^2(X,R)  & H^3(X,R) \\};
\draw[thick] (m-1-1.east) -- (m-6-1.east) ;
\draw[thick] (m-1-1.south) -- (m-1-6.south) ;
\end{tikzpicture}
\end{center}
If $A=\C$, all the rows
of the $E_2$-page of $E^1_\mathbb{C}(X)$ are null with the exception of the $(-2)$-row
whose entries are $H^p(X,\Z)$, $p\geq 0$.
Since  the differentials in the Atiyah-Hirzebruch spectral sequence are torsion operators,
 \cite[Thm.2.7]{paper:Arlettaz}, we obtain the following.
\begin{corollary}\label{cor:computationEA-0} Let $X$ be a finite connected CW complex such that $H^*(X,R)$ is
torsion free. If $A\neq \C$ satisfies the UCT, then
\[
 \BunX{A \otimes \K}\cong E_{A}^{1}(X) \cong H^1(X,R_+^\times)\times \prod_{k \geq 1} H^{2k+1}(X,R)\ .
\]
\end{corollary}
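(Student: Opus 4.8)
The plan is to run the Atiyah--Hirzebruch spectral sequence displayed above, show it collapses at $E_2$, and then split the resulting filtration of $E_{A}^{1}(X)$ in total degree $1$. The coefficients are those of Theorem~\ref{thm:homotopy-groups}: $E_{A}^{0}(\mathrm{pt})=R_{+}^{\times}$ and $E_{A}^{-k}(\mathrm{pt})=K_k(A)$ for $k\ge1$. Since $A$ satisfies the UCT, Theorem~\ref{thm:cancellation}(e) gives $K_1(A)=0$, so by Bott periodicity the rows of the $E_2$-page are exactly as shown: $H^{p}(X,R_{+}^{\times})$ in the row $q=0$, $H^{p}(X,R)$ in the even rows $q\le-2$, and zeros in the odd rows. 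The entries with $p+q=1$ are $H^{1}(X,R_{+}^{\times})$ at $(1,0)$ and $H^{2k+1}(X,R)$ at $(2k+1,-2k)$ for $k\ge1$, already matching the right-hand side.

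First I would check that every differential vanishes. A differential $d_r$ shifts $q$ by $-(r-1)$; as the odd rows are zero, $d_r$ can be nonzero only for odd $r$, and then its target lies in an even row $q-r+1\le-2$, i.e.\ in a group $H^{p+r}(X,R)$. By \cite[Thm.2.7]{paper:Arlettaz} each $d_r$ is a torsion operator, so a nonzero integer annihilates its image; since $H^{*}(X,R)$ is torsion free by hypothesis, that image is both torsion and torsion free, hence zero. (No differential targets the $q=0$ row, since its source would sit in a positive, hence vanishing, row; this is precisely why only $H^{*}(X,R)$, and not $H^{*}(X,R_{+}^{\times})$, must be assumed torsion free.) Thus $E_2=E_\infty$, and $E_{A}^{1}(X)$ carries a finite filtration $E_{A}^{1}(X)=F^{1}\supseteq F^{3}\supseteq\cdots$ with $F^{1}/F^{3}\cong H^{1}(X,R_{+}^{\times})$ and $F^{2k+1}/F^{2k+3}\cong H^{2k+1}(X,R)$ for $k\ge1$.

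It remains to split this filtration. For $k\ge1$ the groups $H^{2k+1}(X,R)$ are finitely generated and torsion free, hence free abelian; working down from the top, each extension $0\to F^{2k+3}\to F^{2k+1}\to H^{2k+1}(X,R)\to0$ has free abelian quotient and therefore splits, yielding $F^{3}\cong\bigoplus_{k\ge1}H^{2k+1}(X,R)$, which is free abelian and is precisely $\bar{E}_{A}^{1}(X)$ for the theory of Corollary~\ref{cou:uAutIsInfLoopSpace}. Delooping the exact sequence of topological groups $1\to\uAut{A\otimes\K}\to\Aut{A\otimes\K}\to R_{+}^{\times}\to1$ of Corollary~\ref{cor:101} (the projection is an $H$-map for the tensor product, since $\pi_0$ of a tensor product is the product in $R_{+}^{\times}$) gives a fibration of spectra and hence a natural exact sequence
\[
\bar{E}_{A}^{1}(X)\longrightarrow E_{A}^{1}(X)\xrightarrow{\ \pi\ } H^{1}(X,R_{+}^{\times})\xrightarrow{\ \delta\ }\bar{E}_{A}^{2}(X),
\]
where $\pi$ is the edge homomorphism and $\bar{E}_{A}^{2}(X)\cong\bigoplus_{k\ge2}H^{2k}(X,R)$.

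The crux, which I expect to be the main obstacle, is the remaining extension $0\to\bar{E}_{A}^{1}(X)\to E_{A}^{1}(X)\xrightarrow{\pi}H^{1}(X,R_{+}^{\times})\to0$, whose quotient need not be free. I would split $\pi$ by producing a homotopy section of the bottom Postnikov map $B\Aut{A\otimes\K}\to K(R_{+}^{\times},1)$. The obstructions to such a section, and equally the connecting operation $\delta$ (and the preceding connecting map $H^{0}(X,R_{+}^{\times})\to\bar{E}_{A}^{1}(X)$, whose vanishing identifies $\ker\pi$ with $\bar{E}_{A}^{1}(X)$), are stable cohomology operations raising degree and landing in the torsion free groups $H^{\mathrm{even}}(X,R)$; exactly as for the spectral sequence differentials, they are torsion operators and hence vanish. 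Therefore $\pi$ is split surjective and
\[
E_{A}^{1}(X)\cong H^{1}(X,R_{+}^{\times})\oplus\bigoplus_{k\ge1}H^{2k+1}(X,R).
\]
For the purely infinite UCT algebras, where $R_{+}^{\times}=R^{\times}$ contains a copy of $\Z/2$, the factor $\{\pm1\}$ is split off directly using the order-two automorphism of Remark~\ref{rem:symmetry}. Combined with $\BunX{A\otimes\K}\cong E_{A}^{1}(X)$ from Theorem~\ref{thm:MainTheorem}(b), this gives the claim.
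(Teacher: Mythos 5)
Your overall architecture coincides with the paper's: the paper's entire proof of this corollary is the sentence preceding the displayed $E_2$-page, namely that the Atiyah--Hirzebruch differentials are torsion operators (Arlettaz) and hence vanish into the torsion-free groups $H^*(X,R)$. Your treatment of the differentials, including the observation that nothing can hit the $q=0$ row (so that only $H^*(X,R)$ need be torsion free), is correct, and you rightly identify the extension problems that the paper leaves implicit; the paper only resolves the analogous top extension explicitly in Corollary~\ref{cor:computationEA-1}, for $R=\Q$, using divisibility of $\bar{E}^1_A(X)$ and the comparison with the $1$-skeleton.

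There is, however, a genuine error in your splitting of the lower part of the filtration: the claim that the groups $H^{2k+1}(X,R)$ are ``finitely generated and torsion free, hence free abelian'' is false for the coefficient rings that actually occur. For $A=M_{\Q}$ one has $R=\Q$, and $H^{2k+1}(X,\Q)$ is a $\Q$-vector space, not a free abelian group; similarly $R=\Z[S^{-1}]$ for UHF-algebras of infinite type. So ``free abelian quotient, hence split'' does not apply as written. The step is repairable: $R=K_0(A)$ is always a subring of $\Q$, hence a PID, the $E_\infty$-quotients $H^{2k+1}(X,R)\cong H^{2k+1}(X,\Z)\otimes R$ are finitely generated free $R$-modules, and an induction from the bottom of the filtration shows that each $F^{2k+3}$ is a uniquely $S$-divisible group, for which $\mathrm{Ext}^1_{\Z}(R^n,-)=0$; alternatively, for $R=\Q$ the subgroup $F^{2k+3}$ is divisible, hence an injective $\Z$-module, which is exactly the mechanism the paper invokes in Corollary~\ref{cor:computationEA-1}. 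Your handling of the top extension is also looser than it should be: the higher obstructions to a section of $B\Aut{A\otimes\K}\to K(R_+^\times,1)$ are not literally stable cohomology operations on $H^1(X,R_+^\times)$, since they depend on the partial lift already chosen; what one actually needs is that the $k$-invariants of the Postnikov tower of the spectrum are torsion classes (true, because rational connective spectra split into Eilenberg--MacLane pieces), so that their pullbacks to $X$ land in the torsion-free groups $H^{2k+2}(X,R)$ and vanish stage by stage. With these two repairs your argument is complete and is, if anything, more explicit about the extension problems than the paper's one-line proof.
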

 \begin{corollary}\label{cor:computationEA-1} Let $X$ be a compact  metrizable space and let $M_{\mathbb{Q}}$ denote the universal UHF-algebra
with $K_0(M_{\mathbb{Q}})\cong \mathbb{Q}$. Then there are natural
isomorphism of groups
\[
\BunX{M_{\mathbb{Q}} \otimes \K}\cong E_{M_{\mathbb{Q}}}^{1}(X)  \cong H^1(X,\mathbb{Q}_+^\times)\oplus \bigoplus_{k \geq 1} H^{2k+1}(X,\mathbb{Q})\ .
\]
\[
\BunX{M_{\mathbb{Q}} \otimes\mathcal{O}_\infty\otimes \K}\cong E_{M_{\mathbb{Q}}\otimes \mathcal{O}_\infty}^{1}(X)  \cong H^1(X,\mathbb{Q}^\times)\oplus \bigoplus_{k \geq 1} H^{2k+1}(X,\mathbb{Q})\ .
\]
\end{corollary}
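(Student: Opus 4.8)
The plan is to deduce Corollary~\ref{cor:computationEA-1} from Corollary~\ref{cor:computationEA-0} by specialising the coefficients and then removing the finiteness hypothesis on $X$ by a continuity argument. First I would record the relevant $K$-theory. The algebra $M_{\mathbb{Q}}$ is a UHF algebra of infinite type, hence separable, unital, stably finite and satisfying the UCT, so $K_0(M_{\mathbb{Q}})\cong\Q$ and $K_1(M_{\mathbb{Q}})=0$ by Theorem~\ref{thm:cancellation}(e); being stably finite it has $R_+^\times=K_0(M_{\mathbb{Q}})_+^\times\cong\Q_+^\times$. For $M_{\mathbb{Q}}\otimes\OO_\infty$ the K\"unneth theorem gives $K_0\cong\Q$ and $K_1=0$ again (or use Theorem~\ref{thm:cancellation}(e)), but now the algebra is purely infinite, so $R_+^\times=R^\times\cong\Q^\times$. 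In both cases $R=K_0(A)\cong\Q$, and Theorem~\ref{thm:homotopy-groups} identifies the coefficients of $E_A^\ast$ as $R_+^\times$ in degree $0$ and $\Q$ in every even negative degree, with the odd negative degrees vanishing; this is exactly the $E_2$-page displayed before Corollary~\ref{cor:computationEA-0}.

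Next I would settle the case of a finite polyhedron $Y$. Because the coefficient ring is $R=\Q$, every group $H^\ast(Y,\Q)$ is a $\Q$-vector space, in particular torsion-free, so the hypothesis of Corollary~\ref{cor:computationEA-0} is automatic and that corollary applies (connectedness is inessential here, as only odd-degree cohomology enters $E_A^1$; for a disconnected $Y$ one argues on each of its finitely many components). This provides, for each such $Y$, the natural Atiyah--Hirzebruch filtration of $E_A^1(Y)$ --- which collapses at $E_2$ because the differentials are torsion operators by \cite[Thm.2.7]{paper:Arlettaz} while their targets are the torsion-free groups $H^\ast(Y,\Q)$ --- with successive quotients $H^1(Y,R_+^\times)$ and $H^{2k+1}(Y,\Q)$ for $k\geq1$.

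Finally I would pass to a general compact metrizable $X$ by continuity. Writing $X=\varprojlim X_n$ as an inverse limit of finite polyhedra as in Remark~\ref{rem:continuity}, both \v{C}ech cohomology and, by that remark, the functor $E_A^1$ are continuous, so $E_A^1(X)\cong\varinjlim E_A^1(X_n)$ and $\check{H}^k(X,G)\cong\varinjlim H^k(X_n,G)$ for $G\in\{\Q,\Q_+^\times,\Q^\times\}$. Since the spectral-sequence filtration is natural, the exact functor $\varinjlim$ carries it to a filtration of $E_A^1(X)$ whose graded pieces are $\check{H}^1(X,R_+^\times)$ and $\check{H}^{2k+1}(X,\Q)$; moreover the colimit yields a direct sum rather than a product, since each class is supported on some finite-dimensional $X_n$ and hence in only finitely many degrees. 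It then remains to split the filtration: the subgroup $F$ assembled from the $\check{H}^{2k+1}(X,\Q)$ is a filtered colimit of $\Q$-vector spaces, hence a $\Q$-vector space, so being divisible it is injective, whence both the extension $0\to F\to E_A^1(X)\to\check{H}^1(X,R_+^\times)\to0$ and the internal filtration of $F$ split, giving $E_A^1(X)\cong\check{H}^1(X,R_+^\times)\oplus\bigoplus_{k\geq1}\check{H}^{2k+1}(X,\Q)$. Substituting $R_+^\times=\Q_+^\times$ for $M_{\mathbb{Q}}$ and $R_+^\times=\Q^\times$ for $M_{\mathbb{Q}}\otimes\OO_\infty$ yields the two displayed isomorphisms.

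The main obstacle is precisely this extension problem. One cannot split the filtration by invoking injectivity of the top coefficient group, because $\Q_+^\times\cong\bigoplus_p\Z$ (and likewise $\Q^\times$) is free of infinite rank and far from divisible. What saves the argument is that this non-divisible group occupies the bottom \emph{quotient} $F^1/F^3$ of the filtration, while the divisible $\Q$-vector space part is the \emph{subgroup} $F=F^3$; it is injectivity of the subgroup, not of the quotient, that forces the sequence to split. Equivalently, the natural higher characteristic classes $\delta_k$ from the introduction provide natural projections onto the $\check{H}^{2k+1}(X,\Q)$, making the splitting manifestly compatible with the colimit transition maps. The only remaining point needing care is that the colimit filtration is exhaustive and Hausdorff, so that the $\Q$-vector space $F$ is genuinely the direct sum of its graded pieces; this is routine, since on each finite-dimensional $X_n$ the filtration has finite length.
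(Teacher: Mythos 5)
Your computation of the groups is correct, but you reach it by a genuinely different route from the paper. You work directly with the Atiyah--Hirzebruch spectral sequence: collapse at $E_2$ because the differentials are torsion operators while the relevant entries are either torsion-free targets or divisible sources, then resolve the extension problems by observing that the subgroup $F^2E_A^1=F^3E_A^1$ is an iterated extension of $\Q$-vector spaces, hence itself a $\Q$-vector space, hence injective, so the extension over $H^1(X,R_+^\times)$ splits; finally you pass to general compact metrizable $X$ by continuity. The paper instead splits the problem structurally before touching the spectral sequence: it uses the principal bundle $\mathrm{Aut}_0(A\otimes\K)\to\Aut{A\otimes\K}\to K_0(A)^\times_+$ of Corollary~\ref{cor:101} to produce the exact sequence $0\to\bar h^1(X)\to h^1(X)\to H^1(X,R_+^\times)\to 0$ (identifying $\bar h^1(X)=\bar E^1_A(X)$ with $F^2h^1(X)$ by comparing with the AHSS edge sequence), and it computes the reduced theory $\bar h^1(X)\cong\bigoplus_{k\ge1}H^{2k+1}(X,\Q)$ in one stroke from Hilton's rational splitting theorem, since $\bar h^*(pt)=t\,\Q[t]$ is a $\Q$-vector space --- thereby avoiding the internal extension analysis you carry out by hand. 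Your argument is more self-contained (it needs only Arlettaz's torsion result and elementary divisibility), while the paper's buys naturality cheaply, which brings me to the one real issue.

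The statement asserts \emph{natural} isomorphisms, and this is where your proposal has a gap. Injectivity of the divisible subgroup $F$ gives, for each $X$, \emph{some} splitting of $0\to F\to E_A^1(X)\to H^1(X,R_+^\times)\to 0$ and of the internal filtration of $F$, but not a splitting compatible with maps $X\to X'$; naturality does not follow from divisibility alone, and it is genuinely needed even to pass the splitting through the colimit $\varinjlim E_A^1(X_n)$ in your last step rather than just the filtration. Your proposed repair via the characteristic classes $\delta_k$ is circular: in the paper the $\delta_k$ are \emph{defined} as the components of the map to $E^1_{\QQ\otimes\OO_\infty}(X)\cong H^1(X,\Q^\times)\oplus\bigoplus_{k\ge1}H^{2k+1}(X,\Q)$, i.e.\ they presuppose exactly the natural decomposition you are trying to prove, at least for $A=\QQ\otimes\OO_\infty$. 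The paper gets naturality from the natural transformation of cohomology theories $h^*\to\bar h^*$ induced by the coefficient projection $(r,f(t))\mapsto f(t)$ (realized by \cite[Thm.3.22]{Hilton:book-generalized-cohomology}), which provides a natural retraction onto $\bar h^1(X)$, together with the natural rational splitting of $\bar h^*$ itself. You would need to import some such device --- either this transformation or a natural multiplicative/rational splitting of the spectrum --- to close the argument as stated.
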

\begin{proof} Set $h^*(X)=E^*_A(X)$, $\bar{h}^*(X)=\bar{E}^*_A(X)$ (see Cor.~\ref{cou:uAutIsInfLoopSpace}) and $R=K_0(A)$ where $A$ is either $M_{\mathbb{Q}}$ or $M_{\mathbb{Q}}\otimes \mathcal{O}_\infty$. We will show that there are natural isomorphisms
(i) $h^1(X)\cong H^{1}(X,R^\times_{+})\oplus \bar{h}^1(X)$ and (ii)  $\bar{h}^1(X)\cong \bigoplus_{k \geq 1} H^{2k+1}(X,\mathbb{Q})$. Note that $\bar{h}^*(pt)=t\,\mathbb{Q}[t]$ with $deg(t)=-2$ and $h^*(pt)=R^{\times}_{+}\oplus t\,\mathbb{Q}[t]$.
Suppose first that $X$ is a finite connected CW-complex.
Then (ii) follows
 by applying  the isomorphism established in equation $(3.20)$ of \cite[p.48]{Hilton:book-generalized-cohomology} since  $\bar{h}^*(pt)$ is a vector spaces over $\mathbb{Q}$.
  If $G$ is a topological group and $H$ a normal subgroup of $G$ such that
$H\to G \to G/H$  is a principal $H$-bundle,    then there is a homotopy fibre sequence
$G\to G/H \to BH \to BG \to B(G/H)$ and hence an exact sequence of pointed sets
$[X,G]\to [X,G/H] \to [X,BH] \to [X,BG] \to [X,B(G/H)]$.
Using this for the principal bundle from Corollary~\ref{cor:101} we obtain an exact
sequence of groups $0\to \bar{h}^1(X)\to h^1(X)\stackrel{\delta_0}\longrightarrow H^1(X,R^\times_{+})$. We want to compare this sequence with the exact sequence $0 \to F^2h^1(X) \to h^1(X)  \to H^1(X,R^\times_{+})\to 0$ given by
 the Atiyah-Hirzebruch spectral sequence.
Recall that $F^2h^1(X)=\mathrm{ker}(h^1(X)\to h^1(X_1))$, where  $X_1$ is the 1-skeleton of $X$.
 Since both maps with target $H^1(X_1,R^\times_{+})$
are injective in the following commutative diagram induced by $X_1\hookrightarrow X$
\[
	\xymatrix{
		h^1(X) \ar[r]^-{\delta_0}\ar[d]  &H^1(X,R^\times_{+})\ar[d]\\
	h^1(X_1) \ar[r] &H^1(X_1,R^\times_{+})
}\]
 we deduce that $F^2h^1(X)\cong\mathrm{ker}(\delta_0)\cong \bar{h}^1(X)$
and hence obtain an exact sequence $0\to \bar{h}^1(X)\to h^1(X)\to H^1(X,R^\times_{+})\to 0.$
Since $\bar{h}^1(X)$ is a divisible
group it follows that $h^1(X)$ splits as $H^1(X,R^\times_{+})\oplus \bar{h}^1(X)$. To verify that there is a natural
splitting one employs the natural transformation $h^*(X)\to \bar{h}^*(X)$ induced by the coefficient map
$h^*(pt)\to \bar{h}^*(pt)$, $(r,f(t))\mapsto f(t)$, see \cite[Thm.3.22]{Hilton:book-generalized-cohomology}.

For the general case we write $X$ as a projective limit of a system of  polyhedra $(X_n)_n$
and then we apply the continuity property of $E_{A}^{1}(X)$ as discussed in Remark~\ref{rem:continuity}.
\end{proof}

Let $A\ncong \OO_2$ be a strongly self-absorbing C*-algebra that satisfies the UCT.  Then $A\otimes \QQ \otimes\OO_\infty\cong \QQ \otimes\OO_\infty$ by \cite{paper:TomsWinter}.
The canonical unital embedding  $A \to A\otimes \QQ \otimes\OO_\infty\cong \QQ \otimes\OO_\infty$ induces a morphism of groups
$\Aut{A\otimes \K}\to \Aut{ \QQ \otimes\OO_\infty\otimes \K}$ and hence a morphism of groups
\[\delta: E_A^1(X)\to E^1_{\QQ \otimes\OO_\infty}(X)\cong H^1(X,\mathbb{Q}^\times)\oplus \bigoplus_{k \geq 1} H^{2k+1}(X,\mathbb{Q}).\]
\begin{definition}
We define rational characteristic classes $\delta_0:\BunX{A \otimes \K}\to H^1(X,\mathbb{Q}^\times)$ and
\(\delta_k:\BunX{A \otimes \K} \to H^{2k+1}(X,\Q)\), $k\geq 1$, to be the components of the map $\delta$ from above. It is clear that $\delta_0$ is lifts to a map $\delta_0: \BunX{A \otimes \K}\to H^1(X,K_0(A)_+^\times)$  induced by the morphism of groups $\Aut{A\otimes \K}\to \pi_0(\Aut{A\otimes \K})\cong K_0(A)_+^\times$ and which gives the obstruction to reducing the structure group to $\uAut{A\otimes \K}$.
We will see in
Corollary~\ref{cor:computationEA-2}
that $\delta_1$ also lifts to an integral class with values in $H^3(X,\Z)$ for $A = \ZZ$.
 One has  $\delta_k(B_1\otimes B_2)=\delta_k(B_1)+\delta_k(B_2)$, $k\geq 0$.
\end{definition}
Since the  differentials in the Atiyah-Hirzebruch spectral sequence are torsion operators we deduce:
\begin{corollary}\label{cor:computationEA-ZZ}
Let $A$ be a strongly self-absorbing C*-algebra that satisfies the UCT.
Let $X$ be a finite connected CW complex such that $H^*(X,\Z)$ is torsion free. Then:

(i)  $B_1, B_2\in \BunX{A\otimes \K}$ are isomorphic if and only
$\delta_k(B_1)=\delta_k(B_2)$ for all $k\geq 0$.

(ii)  $\BunX{\ZZ \otimes \K}\cong \bigoplus_{k \geq 1} H^{2k+1}(X,\Z)$ and $\BunX{\mathcal{O}_\infty \otimes \K}\cong H^1(X,\Z/2)\oplus \bigoplus_{k \geq 1} H^{2k+1}(X,\Z)$.
\end{corollary}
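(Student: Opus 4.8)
The argument splits along the two parts, and I would deduce (ii) directly from the spectral-sequence computation while reducing (i) to the injectivity of the total characteristic class.

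\emph{Part (ii).} I would specialize Corollary~\ref{cor:computationEA-0} to $A=\ZZ$ and $A=\OO_\infty$. Both satisfy the UCT and have $R=K_0(A)=\Z$, so the hypothesis that $H^*(X,R)$ be torsion-free is precisely the assumption that $H^*(X,\Z)$ be torsion-free, and it remains only to compute $R_+^\times$. Since $\ZZ$ is stably finite one has $K_0(\ZZ)^\times=\{\pm1\}$ and hence $R_+^\times=\{1\}$, so $H^1(X,R_+^\times)$ vanishes and $\BunX{\ZZ\otimes\K}\cong\bigoplus_{k\geq1}H^{2k+1}(X,\Z)$. Since $\OO_\infty$ is purely infinite one has $R_+^\times=R^\times=\{\pm1\}\cong\Z/2$, giving the extra summand $H^1(X,\Z/2)$. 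As $X$ is finite-dimensional the product in Corollary~\ref{cor:computationEA-0} is a finite direct sum, and the two displayed isomorphisms follow.

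\emph{Part (i).} The ``only if'' direction is immediate. For the converse I would show that $\delta=(\delta_0,\delta_1,\dots)\colon E_A^1(X)\to E^1_{\QQ\otimes\OO_\infty}(X)$ is injective; since $\BunX{A\otimes\K}$ is a group and $\delta$ is additive, this yields $B_1\cong B_2$ whenever $\delta_k(B_1)=\delta_k(B_2)$ for all $k$. (If $A\cong\OO_2$ then $R=0$ and $\BunX{A\otimes\K}$ is trivial, so the statement is vacuous; thus assume $A\ncong\OO_2$, in which case $R=K_0(A)$ is a nonzero subring of $\Q$ by Theorem~\ref{thm:cancellation}(e).) The map $\delta$ is induced by the unital embedding $A\to A\otimes\QQ\otimes\OO_\infty\cong\QQ\otimes\OO_\infty$, hence by a map of spectra, and is therefore natural for the Atiyah--Hirzebruch spectral sequence; on $E_2$-pages it is given by the coefficient maps $R_+^\times\to\Q^\times$ in the row $q=0$ and by the inclusion $R\hookrightarrow\Q$ in the rows $q=-2k$ (any unital ring map from a subring of $\Q$ into $\Q$ is the inclusion). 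The target theory is computed by Corollary~\ref{cor:computationEA-1}.

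\emph{Collapse and injectivity on the graded.} Both spectral sequences collapse at $E_2$: every differential $d_r$, $r\geq2$, maps the row $q$ into the row $q-r+1\leq-1$, whose only nonzero instances are the even rows with torsion-free coefficients $H^*(X,R)$ or $H^*(X,\Q)$; since each $d_r$ is annihilated by a fixed integer \cite{paper:Arlettaz} and a torsion subgroup of a torsion-free group is zero, all differentials vanish. This is what rescues the possibly torsion-bearing row $H^*(X,\Z/2)$ occurring for $\OO_\infty$, since its differentials leave it for torsion-free rows and none enter it. On the graded quotients $\delta$ induces $H^{2k+1}(X,R)\to H^{2k+1}(X,\Q)$, injective because by the universal coefficient theorem $H^n(X,R)\cong H^n(X,\Z)\otimes R$ for the flat group $R$, so tensoring the injection $R\hookrightarrow\Q$ with the free group $H^{2k+1}(X,\Z)$ preserves injectivity; and it induces $H^1(X,R_+^\times)\to H^1(X,\Q^\times)$, injective because $X$ is connected, so in the long exact sequence of $1\to R_+^\times\to\Q^\times\to\Q^\times/R_+^\times\to1$ the connecting map out of $H^0$ vanishes (the quotient $\Q^\times\to\Q^\times/R_+^\times$ is exactly $H^0(X,\Q^\times)\to H^0(X,\Q^\times/R_+^\times)$, which is onto). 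A filtration-preserving map of finitely filtered groups injective on each graded quotient is injective, so $\delta$ is injective and (i) follows. The main obstacle is this mixed coefficient system: one must keep the multiplicative group $R_+^\times$ in filtration degree one compatible with the additive rationalizations in higher degrees, and confirm that the comparison stays injective on the delicate $q=0$ row despite the torsion carried by $H^*(X,\Z/2)$.
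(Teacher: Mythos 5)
Your proposal is correct and takes essentially the same route the paper intends (the paper derives this corollary in one line from the fact that the Atiyah--Hirzebruch differentials are torsion operators): part (ii) is the specialization of Corollary~\ref{cor:computationEA-0} to $R=\Z$ with $R_+^\times$ trivial, respectively $\Z/2$, and part (i) amounts to the collapse of both spectral sequences plus injectivity of the comparison map $\delta$ on every $E_2$-entry of total degree one, exactly as you argue. The only cosmetic point is that the claim that $K_0(A)$ is a unital subring of $\Q$ for $A\ncong\OO_2$ satisfying the UCT rests on the full Toms--Winter classification rather than on Theorem~\ref{thm:cancellation}(e) alone, which states the classification only in the purely infinite case.
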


\begin{corollary}\label{cor:computationEA-2} 
Let $X$ be a compact connected metrizable space. Let $A$ be a strongly self-absorbing $C^*$-algebra, which satisfies the UCT. Then $ H^{i+2}(X, K_0(A))$ is a natural direct summand of $\bar{E}^i_{A}(X)$ 
for all $i\geq 0$. It follows that there is a natural homomorphism $$\bar{\delta}_1 \colon \bar{E}^1_A(X) \to H^3(X,K_0(A))$$ giving back the usual Dixmier-Douady class for $A = \C$.
\end{corollary}
\begin{proof} Recall that ${E}^{i}_\mathbb{C}(X)\cong \bar{E}^i_\mathbb{C}(X)\cong H^{i+2}(X,\Z)$.
Using the continuity properties discussed in Remark~\ref{rem:continuity}, we may assume that $X$ is a finite connected CW complex. Since $A$ satisfies the UCT, $K_1(A)= 0$ and $K_0(A) \subset \Q$ is flat and satisfies $K_0(A) \otimes K_0(A) \cong K_0(A)$. The  natural transformation  of cohomology theories $\bar{E}^*_A(X) \stackrel{\cong}\longrightarrow \bar{E}^*_A(X) \otimes K_0(A)$ is an isomorphism since it is so on coefficients. The unital map  $\C \to  A$ induces a natural
transformation of cohomology theories $T \colon \bar{E}^*_{\C}(X) \otimes K_0(A) \to \bar{E}^*_{A}(X) \otimes K_0(A) \cong \bar{E}^*_A(X)$. The desired conclusion follows now from the naturality of the Atiyah-Hirzebruch spectral sequence since all the rows
of the $E_2$-page of $\bar{E}^*_\mathbb{C}(X) \otimes K_0(A)$ are null with the exception of the $(-2)$-row and $T$ induces the identity map on this row due to the isomorphism $\pi_2(\Aut{\K}) \otimes K_0(A) \to \pi_2(\Aut{A \otimes \K}) \otimes K_0(A) \cong \pi_2(\Aut{A \otimes \K})$, see Theorem~\ref{thm:homotopy-groups}. The edge homomorphism $\bar{E}^i_{A}(X) \to H^{i+2}(X, K_0(A))$ and $T$ give the splitting.
\end{proof}

\begin{corollary}\label{cor:103} Let $X$ be a compact metrizable space and let $A$ be a strongly self-absorbing C*-algebra. Two bundles $B_1,B_2\in \BunX{A\otimes \K}$ are isomorphic if and only if $B_1\otimes \OO_\infty \cong B_2 \otimes \OO_\infty$.
\end{corollary}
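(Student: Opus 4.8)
# Proof Proposal for Corollary~\ref{cor:103}

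\textbf{Overview of the strategy.} The statement asserts that the tensor-product stabilization map $\BunX{A \otimes \K} \to \BunX{A \otimes \OO_\infty \otimes \K}$, $B \mapsto B \otimes \OO_\infty$, is injective. The plan is to reinterpret this map entirely in terms of the generalized cohomology theories developed above. Recall from Theorem~\ref{thm:MainTheorem} that $\BunX{A \otimes \K} \cong E_A^1(X)$, and the map $B \mapsto B \otimes \OO_\infty$ is induced by the unital inclusion $A \otimes \K \to \OO_\infty \otimes A \otimes \K$, hence by the group homomorphism $\Aut{A \otimes \K} \to \Aut{\OO_\infty \otimes A \otimes \K}$, $\alpha \mapsto \id{\OO_\infty} \otimes \alpha$. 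The key point is that this is a morphism of infinite loop spaces, so it induces a natural transformation of cohomology theories $E_A^*(X) \to E_{\OO_\infty \otimes A}^*(X)$, and the claim is that the degree-$1$ component is injective.

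\textbf{Reduction to the connected component.} First I would handle the $\pi_0$-information separately from the rest. By Corollary~\ref{cor:101} we have the split-type exact sequence $1 \to \uAut{A \otimes \K} \to \Aut{A \otimes \K} \to K_0(A)^\times_+ \to 1$, which deloops to relate $E_A^*$ to $\bar{E}_A^*$ from Corollary~\ref{cou:uAutIsInfLoopSpace}. On the level of the associated-graded pieces of the Atiyah-Hirzebruch filtration, the stabilization map acts on the $\pi_0 = K_0(A)^\times_+$ part and on the $\uAut$ part (with homotopy groups $K_i(A)$, $i \geq 1$) independently. Tensoring with $\OO_\infty$ does not change $K_i(A)$ for $i \geq 1$ when $A \neq \C$: indeed Corollary~\ref{cor:102} states precisely that $\uAut{A \otimes \K} \to \uAut{\OO_\infty \otimes A \otimes \K}$ is a homotopy equivalence. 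This handles the entire ``higher'' part of the theory at one stroke, reducing the problem to understanding the effect on the $\pi_0 = K_0(A)^\times_+$-component, i.e.\ on the $H^1$-term of the spectral sequence.

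\textbf{The core injectivity argument.} The remaining task is to show that $K_0(A)^\times_+ \to K_0(\OO_\infty \otimes A)^\times_+ = K_0(\OO_\infty \otimes A)^\times$ is injective, and more precisely that the map on $H^1(X, -)$ coefficients, together with the fact that the higher differentials agree under the natural transformation, forces injectivity of $\delta_0$-component plus higher components jointly. Since $K_0(\OO_\infty \otimes A) \cong K_0(A)$ as rings (because $\OO_\infty$ is a strongly self-absorbing algebra with $K_0(\OO_\infty) = \Z$ and unit class $[1]$, so $K_0(\OO_\infty \otimes A) \cong \Z \otimes K_0(A) \cong K_0(A)$ via the Künneth theorem, with the unit class preserved), the map on $K_0$ is an isomorphism on the level of the ring, hence on units; the only subtlety is positivity. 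The natural inclusion carries $K_0(A)^\times_+$ into $K_0(\OO_\infty \otimes A)^\times = K_0(A)^\times$, and this is the inclusion of the positive units into all units, which is manifestly injective. Thus all the coefficient maps are injective and the natural transformation $E_A^* \to E_{\OO_\infty \otimes A}^*$ induces injections on every row of the $E_2$-page. By naturality of the Atiyah-Hirzebruch spectral sequence the induced map on $E_A^1(X) = \BunX{A \otimes \K}$ is injective. Finally, to remove the assumption that $X$ is a finite CW complex and treat a general compact metrizable $X$, I would invoke the continuity property from Remark~\ref{rem:continuity}, writing $X$ as a projective limit of finite polyhedra and passing to the direct limit.

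\textbf{The main obstacle.} The genuinely delicate step is the passage from injectivity on each associated-graded piece of the filtration to injectivity of the map on $E_A^1(X)$ itself. A natural transformation inducing injections on every page of a spectral sequence does not automatically induce an injection on the abutment without a filtration-comparison argument; one must check that no nontrivial element of the source filtration can die in the target. Here the crucial leverage is Corollary~\ref{cor:102}, which upgrades the higher-degree coefficient maps from mere isomorphisms on homotopy groups to an honest homotopy equivalence $\uAut{A \otimes \K} \simeq \uAut{\OO_\infty \otimes A \otimes \K}$; this means the natural transformation $\bar{E}_A^* \to \bar{E}_{\OO_\infty \otimes A}^*$ is itself an isomorphism of cohomology theories, so the only place where information could be lost is the $\pi_0$-component, where the coefficient map is the injection $K_0(A)^\times_+ \hookrightarrow K_0(A)^\times$. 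Assembling these two facts---a cohomology-theory isomorphism on the connected part and an injective coefficient map on $\pi_0$---via the long exact sequence of the fibration $\uAut{A \otimes \K} \to \Aut{A \otimes \K} \to K_0(A)^\times_+$ and its delooping should yield the desired injectivity cleanly, and I would expect to phrase the final argument using the exact sequence of pointed sets from a homotopy fibre sequence exactly as in the proof of Corollary~\ref{cor:computationEA-1}.
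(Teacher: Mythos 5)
Your overall route is the one the paper takes: compare the two exact sequences coming from Corollary~\ref{cor:101} after delooping, use Corollary~\ref{cor:102} to identify the connected parts, and deduce injectivity of $E^1_A(X)\to E^1_{A\otimes\OO_\infty}(X)$ by a diagram chase (the paper phrases it as the five lemma applied to the two rows $0\to \bar E^1(X)\to E^1(X)\to H^1(X,\pi_0)$). However, two steps are genuinely missing. First, the case $A=\C$ is not covered: Corollary~\ref{cor:102} is stated only for $A\neq\C$, and indeed $\uAut{\K}\simeq K(\Z,2)\to\uAut{\OO_\infty\otimes\K}$ is \emph{not} a homotopy equivalence, since $\uAut{\OO_\infty\otimes\K}\simeq BU(\OO_\infty)$ has nontrivial homotopy in all even degrees. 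The paper treats this case separately via Corollary~\ref{cor:computationEA-2}, which exhibits $E^i_{\C}(X)\cong H^{2+i}(X,\Z)$ as a natural direct summand of $E^i_{\OO_\infty}(X)$; your proof needs some such substitute, as the statement of the corollary does not exclude $A=\C$.

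Second, the inference that because the coefficient map $K_0(A)^\times_+\hookrightarrow K_0(A)^\times$ is injective, ``all the coefficient maps are injective and the natural transformation induces injections on every row of the $E_2$-page'' is not valid: an injective homomorphism of coefficient groups does not in general induce an injection on cohomology (multiplication by $2$ on $\Z$ is the standard counterexample). What is actually needed --- and what the paper proves --- is that $j\colon H^1(X,K_0(A)^\times_+)\to H^1(X,K_0(A)^\times)$ is injective. This follows from the exact coefficient sequence $H^0(X,R_2)\to H^0(X,R_2/R_1)\to H^1(X,R_1)\to H^1(X,R_2)$ for an inclusion $R_1\hookrightarrow R_2$ of discrete abelian groups, together with the observation that every locally constant map $X\to R_2/R_1$ lifts to a locally constant map $X\to R_2$, so the connecting map vanishes; this is special to degree one and must be said explicitly. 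With these two repairs your argument coincides with the paper's proof; your own caveat about passing from the $E_2$-page to the abutment is well taken, and retreating, as you do at the end, to the exact sequence of the homotopy fibre sequence plus a five-lemma argument is exactly the right move.
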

\begin{proof} Without any loss of generality we may assume that $X$ is a finite   CW-complex.
 By Corollary~\ref{cor:101} there is a  commutative diagram with exact rows
\[
	\xymatrix{0\ar[r]&
		\mathrm{Aut}_{0}(A\otimes \K) \ar[r]\ar[d] & \Aut{A \otimes \K} \ar[d]\ar[r] &K_0(A)^{\times}_{+}\ar[d]\ar[r]&0\\
	0\ar[r]&	\mathrm{Aut}_{0}(\OO_\infty\otimes A\otimes \K) \ar[r] & \Aut{\OO_\infty\otimes A \otimes \K}
\ar[r] &K_0(A)^{\times}	\ar[r]&0}\]
Passing to classifying spaces  we obtain a commutative diagram:
\[
	\xymatrix{0\ar[r]&
		[X,B\mathrm{Aut}_{0}(A\otimes \K)] \ar[r]\ar[d]_{i} & [X,B\Aut{A \otimes \K} ]\ar[d]_{T}\ar[r] &{H}^1(X,K_0(A)^{\times}_{+})\ar[d]_{j}\\
	0\ar[r]&	[X,B\mathrm{Aut}_{0}(\OO_\infty\otimes A\otimes \K)] \ar[r] & [X,B\Aut{\OO_\infty\otimes A \otimes \K}]
\ar[r] & {H}^1(X,K_0(A)^{\times}	)}
\]
This is a diagram of abelian groups by Theorem~\ref{thm:MainTheorem} and Corollary~\ref{cou:uAutIsInfLoopSpace}.
The map $j$ is injective. This follows from the exactness
of the sequence ${H}^0(X,R_2)\to{H}^0(X,R_2/R_1)\to {H}^1(X,R_1)\to {H}^1(X,R_2)$ induced by an inclusion
of discrete abelian groups $R_1\hookrightarrow R_2$. Let us argue that the map $i$ is also injective.
 If $A\neq \C$,  this follows from Corollary~\ref{cor:102}, whereas for $A=\C$  this was proved in Corollary~\ref{cor:computationEA-2}. The five lemma implies now that the map
 $T:E^1_{A}(X)\to E^1_{A\otimes \OO_\infty}(X)$ is injective.
\end{proof}

Finally we address the question to what extent the results (i) and (ii) of Dixmier and Douady mentioned in the introduction admit generalizations to our context. The following statement corresponds to  (i)  and the first part of (ii).
\begin{corollary}\label{cor:D} Let $B$ be a separable continuous field of C*-algebras over a compact metrizable space whose fibers are Morita equivalent to the same strongly self-absorbing C*-algebra $A$. Suppose  that $B$ satisfies Fell's condition. Then for each $x\in X$, there is a closed neighborhood $V$ of $x$ with the following property. There exists a unital separable continuous field $D$ over $V$ with fibers isomorphic to $A$
 such that $B(V)\otimes \K\cong D \otimes \K$. If  $A$ is finite dimensional or if $X$ is finite dimensional,
then $B\otimes \K$ is locally trivial and therefore we can associate to it an invariant $\delta(B)\in E^1_A(X)$
which classifies $B\otimes \K$ up to isomorphism of continuous fields, and $B$ up to Morita equivalence over $X$.
\end{corollary}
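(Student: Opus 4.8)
The plan is to reduce the Morita-equivalent setting to the isomorphism-class setting already handled by Theorem~\ref{thm:trivialization}, and then assemble the local pieces into a global statement. First I would work locally: fix $x\in X$ and use Fell's condition to obtain a closed neighborhood $V$ of $x$ and a projection $p\in B(V)$ with $[p(v)]\in K_0(B(v))^\times$ for all $v\in V$. Since each fiber $B(v)$ is Morita equivalent to $A$, after stabilizing we have $B(v)\otimes\K\cong A\otimes\K$, and $p(v)$ becomes a projection in $B(v)\otimes\K$ whose class is invertible. By Lemma~\ref{lemma:p-tensor-q} (applied in the fiber) the corner $p(v)\bigl(B(v)\otimes\K\bigr)p(v)\cong A$. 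Thus $D:=p\,(B(V)\otimes\K)\,p$ is a unital continuous field over $V$ with all fibers isomorphic to $A$, and since $p$ is full in $B(V)\otimes\K$, Brown's stabilization theorem \cite{paperL.G.Brown.Stable.Isom} gives $D\otimes\K\cong (B(V)\otimes\K)\otimes\K\cong B(V)\otimes\K$ as $C(V)$-algebras. This establishes the first assertion.

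Next I would invoke the finite-dimensionality hypotheses to upgrade $D$ to a locally trivial, indeed trivial, field. If $A$ is finite dimensional, or if $X$ (hence $V$) is finite dimensional, then the unital field $D$ with fiber $A$ is classified by \cite{paper:Dadarlat-Winter-fields}, giving $D\cong C(V)\otimes A$ locally; combined with the isomorphism $D\otimes\K\cong B(V)\otimes\K$ this yields $B(V)\otimes\K\cong C(V)\otimes A\otimes\K$, so $B\otimes\K$ is locally trivial. This is exactly the mechanism of the proof of Theorem~\ref{thm:trivialization}, now applied to the stabilized field $B\otimes\K$ rather than to $B$ itself, which is what allows us to drop the requirement that the fibers be honestly isomorphic to $A\otimes\K$ and only assume Morita equivalence.

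Having produced a locally trivial $A\otimes\K$-bundle, I would then define $\delta(B)\in E^1_A(X)=\BunX{A\otimes\K}$ to be its classifying class, using Theorem~\ref{thm:MainTheorem}(b), which identifies isomorphism classes of such bundles with $E^1_A(X)$. Two such fields $B_1\otimes\K$ and $B_2\otimes\K$ are isomorphic as continuous fields if and only if their classes in $E^1_A(X)$ coincide, and by Morita theory $B_1\otimes\K\cong B_2\otimes\K$ over $X$ precisely when $B_1$ and $B_2$ are Morita equivalent over $X$; this gives the final classification statement.

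The main obstacle I anticipate is \emph{globalizing} the local corner construction: the projection $p$ supplied by Fell's condition exists only over a neighborhood $V$, so a priori $\delta(B)$ is defined only after passing to the stabilization $B\otimes\K$, where local triviality can be glued without needing a global full projection of invertible class. The delicate point is therefore to check that the locally defined trivializations $B(V)\otimes\K\cong C(V)\otimes A\otimes\K$ patch together to exhibit $B\otimes\K$ as a genuine locally trivial $A\otimes\K$-bundle over all of $X$, so that Theorem~\ref{thm:MainTheorem} applies and the invariant is well defined and independent of the local choices. Once local triviality of $B\otimes\K$ is secured, the remaining verifications — that the class is a complete invariant for $B\otimes\K$ up to isomorphism and for $B$ up to Morita equivalence over $X$ — are formal consequences of Theorem~\ref{thm:MainTheorem}(b) and the Brown--Green--Rieffel theory already recalled in the discussion preceding Corollary~\ref{cor:Picard}.
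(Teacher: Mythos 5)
Your argument is essentially the paper's own proof: take the corner $D=pB(V)p$ (your $p(B(V)\otimes\K)p$ is the same algebra up to isomorphism), identify its fibers with $A$ via Lemma~\ref{lemma:p-tensor-q}, apply Brown's stable isomorphism theorem to get $D\otimes\K\cong B(V)\otimes\K$, and then trivialize and classify via Theorem~\ref{thm:trivialization} and Theorem~\ref{thm:MainTheorem}. Two small remarks. First, in the case where $A$ is finite dimensional (so $A=\C$) but $X$ is \emph{not} assumed finite dimensional, you cannot invoke \cite{paper:Dadarlat-Winter-fields}, which requires a finite-dimensional base; the paper instead observes that a unital separable continuous field with fibers $\C$ is automatically $C(V)$, which settles that case elementarily. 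Second, the ``globalization'' issue you flag at the end is vacuous: local triviality is by definition a local property, so once every point has a neighborhood over which $B\otimes\K$ is trivialized, $B\otimes\K$ is a locally trivial $A\otimes\K$-bundle and Theorem~\ref{thm:MainTheorem} applies directly; no compatibility of the local trivializations needs to be verified.
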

\begin{proof} Let $x\in X$. Let $p$ and $V$ be as in Definition~\ref{def:Fell}. Letting $D:=pB(V)p$ we have already seen in the proof of Theorem~\ref{thm:trivialization} that $B(V)\otimes \K\cong D \otimes \K$ and that all the fibers of $D$ are isomorphic to $A$.
If $A$ is finite dimensional, then $A=\mathbb{C}$, and so obviously $D=C(V)$.  This corresponds to the result (i)
of Dixmier and Douady.
If $X$ is finite dimensional, then
  $B(V)\otimes \K\cong C(V)\otimes A \otimes \K$  by Theorem~\ref{thm:trivialization}.
  We conclude the proof by applying Theorem~\ref{thm:MainTheorem}.
\end{proof}
As we have just seen, the  class $\delta(B)\in E^1_A(X)$ is defined for continuous fields with fibers Morita equivalent to $A$
which satisfy Fell's condition.  Furthermore, one can associate rational characteristic classes to certain continuous fields
which are typically very far from being locally trivial and whose fibers are not necessarily Morita equivalent to each other.
\begin{corollary}\label{cor:char-class} Let $B$ be a separable continuous field of C*-algebras over a finite dimensional compact metrizable space whose fibers are Morita equivalent to (possibly different) strongly self-absorbing C*-algebras satisfying the UCT.
 Suppose that for each $x\in X$ there is a closed neighborhood $V$ of $x$ and a projection $p\in B(V)$ such that
 $[p(v)]\neq 0$ in $K_0(B(v))$ for all $v\in V$. Then $B_\sharp:=B\otimes \mathcal{O}_\infty\otimes M_{\mathbb{Q}}\otimes \K$
is locally trivial and so we can associate ``stable" rational characteristic classes to $B$, by defining $\delta_k^{stable}(B)=\delta_k(B_\sharp)\in H^{2k+1}(X,\mathbb{Q})$.
These cohomology classes determine $B_\sharp$ up to an isomorphism.
\end{corollary}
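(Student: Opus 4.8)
The plan is to show that $B_\sharp$ is a \emph{locally trivial} field with fiber $\QQ\otimes\OO_\infty\otimes\K$ and then to read its invariant off from Corollary~\ref{cor:computationEA-1}.

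First I would pin down the fibers of $B_\sharp$. Writing $B(x)\otimes\K\cong A_x\otimes\K$ with $A_x$ strongly self-absorbing and UCT, the weak Fell hypothesis supplies near each $x$ a projection $p$ with $[p(v)]\neq 0$ in $K_0(B(v))\cong K_0(A_v)$; in particular $K_0(A_x)\neq 0$, so $A_x\ncong\OO_2$. Hence the Toms--Winter absorption $A_x\otimes\QQ\otimes\OO_\infty\cong\QQ\otimes\OO_\infty$ gives $B_\sharp(x)\cong\QQ\otimes\OO_\infty\otimes\K$, and, since tensoring a continuous field with the nuclear algebra $\OO_\infty\otimes\QQ\otimes\K$ preserves continuity and separability, $B_\sharp$ is a separable continuous field over $X$ with all fibers isomorphic to $(\QQ\otimes\OO_\infty)\otimes\K$.

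Next I would verify Fell's condition for $B_\sharp$ so as to invoke Theorem~\ref{thm:trivialization}. Fixing $x$ and the pair $(V,p)$, set $q=p\otimes 1_{\OO_\infty}\otimes 1_{\QQ}\otimes e\in B_\sharp(V)$. Since $K_0(B_\sharp(v))\cong K_0(\QQ\otimes\OO_\infty)\cong\Q$ as a ring and $\QQ\otimes\OO_\infty$ is purely infinite, one has $K_0(B_\sharp(v))^\times=\Q\setminus\{0\}$, so it is enough to show $[q(v)]\neq 0$. The class $[q(v)]$ is the image of $[p(v)]$ under the map $K_0(B(v))\to K_0(B_\sharp(v))$ induced by $b\mapsto b\otimes 1\otimes 1\otimes e$; using $K_*(\OO_\infty)=(\Z,0)$ and the flatness of $K_0(\QQ)=\Q$, the Künneth theorem identifies this map with the rationalization $K_0(A_v)\to K_0(A_v)\otimes\Q\cong\Q$. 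The classification of strongly self-absorbing UCT algebras (Theorem~\ref{thm:cancellation}(e), and in the stably finite case $\C$, $\ZZ$ and the UHF algebras of infinite type) shows that $K_0(A_v)$ is torsion-free of rank one for $A_v\ncong\OO_2$, so this rationalization is injective and $[p(v)]\neq 0$ forces $[q(v)]\neq 0$. Thus $B_\sharp$ satisfies Fell's condition.

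Finally, Theorem~\ref{thm:trivialization} (applied with $A=\QQ\otimes\OO_\infty$, over the finite dimensional compact metrizable $X$) gives that $B_\sharp$ is locally trivial, hence determines a class in $\BunX{\QQ\otimes\OO_\infty\otimes\K}=E^1_{\QQ\otimes\OO_\infty}(X)$. By Corollary~\ref{cor:computationEA-1} this group is isomorphic to $H^1(X,\Q^\times)\oplus\bigoplus_{k\geq 1}H^{2k+1}(X,\Q)$, and the coordinates of $B_\sharp$ under this isomorphism are its characteristic classes $\delta_0(B_\sharp)\in H^1(X,\Q^\times)$ and $\delta_k(B_\sharp)=\delta_k^{stable}(B)\in H^{2k+1}(X,\Q)$ for $k\geq 1$; being the coordinates under a group isomorphism, they determine $B_\sharp$ up to isomorphism. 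I expect the main obstacle to be the injectivity in the middle step --- that a nonzero fiberwise $K_0$-class stays nonzero, hence becomes invertible, after tensoring with $\QQ\otimes\OO_\infty$ --- which rests on the torsion-freeness and rank-one property of $K_0$ for all strongly self-absorbing UCT algebras other than $\OO_2$, together with the Künneth identification of the fiberwise map with rationalization.
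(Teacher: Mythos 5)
Your proposal is correct and follows essentially the same route as the paper: identify all fibers of $B_\sharp$ with $\QQ\otimes\OO_\infty\otimes\K$ via the Toms--Winter absorption (using $K_0(B(x))\neq 0$ to rule out $\OO_2$), verify Fell's condition by showing the fiberwise map $K_0(B(v))\to K_0(B_\sharp(v))$ is injective, and then apply Theorem~\ref{thm:trivialization} and Corollary~\ref{cor:computationEA-1}. The only difference is that you spell out the injectivity step (K\"unneth plus torsion-freeness and rank one of $K_0$ for the relevant strongly self-absorbing UCT algebras), which the paper delegates to the classification in \cite{paper:TomsWinter}.
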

\begin{proof} The fibers  $B_\sharp(x)=B(x)\otimes \mathcal{O}_\infty\otimes M_{\mathbb{Q}}\otimes \K$
 satisfy the UCT and they are  stabilized strongly self-absorbing
Kirchberg algebras not isomorphic to $\OO_2\otimes \K$ since $K_0(B(x))\neq 0$. It follows by \cite{paper:TomsWinter} that they are all isomorphic to $\mathcal{O}_\infty\otimes M_{\mathbb{Q}}\otimes \K$ and moreover that the induced map
$K_0(B(x))\to K_0(B_\sharp(x))$ is injective for all $x\in X$. It follows that $B_\sharp$  satisfies
Fell's condition and hence it is locally trivial by Theorem~\ref{thm:trivialization}. We conclude the proof by applying
Corollary~\ref{cor:computationEA-1}.
\end{proof}

\renewcommand*{\bibfont}{\small}
\bibliographystyle{plain}

\end{document}